\newtheorem{de}{Definition}
\newtheorem{pro}{Proposition}
\newtheorem{cor}{Corollary}
\newtheorem{teo}{Theorem}
\newtheorem{rem}{Remark}
\newtheorem{lem}{Lemma}
\newtheorem{exa}{Example}
\renewcommand{\int}{{\rm int}}
\newcommand{\betabarra}{\bar{\beta}}
\newcommand{\bv}{{\bf v}}
\title{The log-canonical threshold of a  plane curve}
\author{C. Galindo, F. Hernando and F. Monserrat}
\curraddr{\texttt{Carlos Galindo and Fernando Hernando:} Instituto
Universitario de Matem\'aticas y Aplicaciones de Castell\'on and
Departamento de Matem\'aticas, Universitat Jaume I, Campus de Riu
Sec. 12071 Castell\'{o} (Spain).} \email{{\rm Galindo:}
galindo@mat.uji.es; {\rm Hernando:} carrillf@mat.uji.es}
\curraddr{\texttt{Francisco Monserrat:} Instituto Universitario de
Matem\'atica Pura y Aplicada, Universidad Polit\'ecnica de
Valencia, Camino de Vera s/n, 46022 Valencia (Spain).}
\email{framonde@mat.upv.es}
\date{}
\thanks{Supported by Spain Ministry of Economy
MTM2012-36917-C03-03 and University Jaume I P1-1B2012-04}
\begin{document}

\maketitle

\begin{abstract}
We give an explicit formula for the log-canonical threshold of a
reduced germ of plane curve. The formula depends only on the
first two maximal contact  values of the branches and their intersection
multiplicities. We also improve the two branches formula
given in \cite{Kuwata}.
\end{abstract}

\section*{Introduction}

Let $X_0$ be a nonsingular variety over an algebraically closed
field $k$ and consider a non-zero ideal sheaf $\mathfrak{a}
\subseteq \mathcal{O}_{X_0}$. Assume the existence of a log
resolution $\pi: X \rightarrow X_0$ of $\mathfrak{a}$ and let $F$ be the effective divisor such that
$\mathfrak{a} \mathcal{O}_{X} = \mathcal{O}_{X} (-F)$. For any rational number $t > 0$, the
multiplier ideal sheaf $\mathcal{J} (\mathfrak{a}^t)$ is defined to
be $\mathcal{J} (\mathfrak{a}^t) = \pi_* \mathcal{O}_X (K_{X|X_0} -
\lfloor t F \rfloor)$, where $K_{X|X_0}$ denotes the relative
canonical divisor of $\pi$ (that is, the unique exceptional divisor on $X$ such that ${\mathcal O}_X(K_{X|X_0})$ is the dual of the relative jacobian sheaf \cite[page 206 (2.3)]{l-s}) and $\lfloor \cdot \rfloor$ the
round-down or the integer part of the corresponding divisor. A
similar definition can be given for divisors on $X_0$. Multiplier ideals have
been introduced and studied for complex varieties and admit an
interesting analytic setting but the definition we use only depends on the
existence of a log resolution. Multiplier ideals have a precedent,
adjoint ideals which were introduced by Lipman in \cite{14adv}, and \cite[Chapters
9, 10, 11]{Lazarsfeld} is the main reference for them. They are an
important tool in birational geometry and singularity theory. Among other
reasons, it is worthwhile to mention that they provide information on the type of singularity
corresponding to the ideal $\mathfrak{a}$ and are very useful
because accomplish several vanishing theorems. However, explicit
computations are hard since they involve either to compute
resolutions of singularities or to obtain difficult integrals.

Attached to $\mathfrak{a}$, there exists an
increasing sequence of rational numbers $0=\iota_0 < \iota_1 <
\iota_2 < \cdots$, called jumping numbers of $\mathfrak{a}$, such
that $\mathcal{J} (\mathfrak{a}^j)= \mathcal{J}
(\mathfrak{a}^{\iota_l})$ for $\iota_l \leq j < \iota_{l+1}$ and
$\mathcal{J} (\mathfrak{a}^{\iota_{l+1}}) \subset \mathcal{J}
(\mathfrak{a}^{\iota_{l}})$ for each $l \geq 0$. That is, the
family of multiplier ideals of $\mathfrak{a}$ is totally ordered
by inclusion and parameterized by non-negative rational numbers
(see \cite{6-adv} for more information about the antecedents of
these numbers). Computing jumping numbers is not easy. They are
known for analytically irreducible (germs of) plane curves (see
\cite{jar,25-adv,16-adv} and there exists an algorithm for obtaining
them for ideals of the local ring at a rational singularity of a
complex algebraic surface \cite{24-adv}. A combinatorial criterium
for a rational number to be a jumping number of a complete ideal  $\mathfrak{a}$ of
finite co-length in a local regular 2-dimensional ring is given in \cite{HJ}.
The case when $\mathfrak{a}$ is simple is well-known since jumping numbers
and Poincar\'e series (an algebraic object relating jumping numbers and multiplier ideals)
have been computed \cite{jar, gal-mon}.

The first non-zero jumping number $\iota_1$ is named the {\it
log-canonical threshold} of $\mathfrak{a}$. This number is,
possibly, the most interesting of the jumping numbers; it appears
in many different contexts and is related with rather different
objects. Indeed, if $\mathfrak{a}$ is given by a polynomial
providing a complex hyper-surface germ with an isolated
singularity, the log-canonical threshold can be computed,
theoretically, via, the $L^2$ condition for holomorphic functions
\cite{Lazarsfeld}, the growth of the codimension of jet schemes
\cite{mus2-I}, the poles of the motivic zeta function \cite{66-B},
the generalized and twisted Bernstein-Sato polynomial
\cite{79-B,20-B}, the test ideals \cite{68-B}, the Arnold's
complex oscillation index \cite{79-B}, the Hodge spectrum
\cite{18-B} and, as we have said, the orders of vanishing on a log
resolution.

It seems that the first (implicit) use  of the
log-canonical threshold was in \cite{ati-I} where a conjecture of
Gelfand was proved. In the 1980's this number was seen as one of
the invariants considered by Steenbrink \cite{ste-I} in the
so-called spectrum of a singularity. It was named complex
singularity exponent and some of its properties were proved by
Varchenko \cite{var2-I,var3-I, var1-I}. Afterwards, Shokurov
\cite{sho-I} considered the log-canonical threshold within the
context of birational geometry. This concept allows us to define
log-canonical pairs which play a fundamental role in the Minimal
Model Program that recently has achieved a great advance
\cite{bchm-I}. The paper \cite{bir-I} on the Shokurov conjecture
(on the ACC for log-canonical thresholds of non-necessarily
nonsingular ambient spaces) has been crucial for that progress.

Despite of being a very interesting number, only a few explicit
generic computations of log-canonical thresholds are known. For
instance, as we have mentioned, it is known for analytically
irreducible (germs of) plane curves over algebraically closed
fields. It is $\frac{1}{\bar{\beta}_0} + \frac{1}{\bar{\beta}_1}$,
where $(\bar{\beta}_0,\bar{\beta}_1)$ are the first two maximal
contact values of the curve \cite{igusa, jar}. In the  complex
case, the log-canonical thresholds of irreducible quasi-ordinary
hyper-surface singularities are also known \cite{b-g-g} and the
motivic zeta function was used for this computation. However,
there is no formula for computing the log-canonical threshold of a
reduced plane curve. This is the goal of this paper. Uniquely, in
the complex case, a formula for the two-branches case is given in
\cite{Kuwata}. This formula depends on the  first two Puiseux
exponents (which are $\bar{\beta}_0$ and $\bar{\beta}_1)$ of the
branches and on its intersection number, and in many cases, the
log-canonical threshold is given as the minimum of three
candidates. Furthermore, in the same paper, it is also proved that
for any number of branches, the log-canonical threshold only
depends on the  first two Puiseux exponents of the branches and
their intersection numbers; however no formula is given. Finally,
in \cite{art,nai}, it has been proved that (even in the
non-reduced case), there exist suitable local coordinates  such
that the log-canonical threshold coincides with the  intersection
of the Newton polygon of the curve with the
diagonal line.

In this paper, we give in Theorem \ref{gordo2} an explicit formula for the log-canonical
threshold of a reduced (germ of) plane curve $C$. Our formula
extends that of \cite{Kuwata} to any number of branches. We also improve the formula of \cite{Kuwata} determining the value of the log-canonical threshold for the two branches case without having to calculate a minimum (see Corollary \ref{corol}). Since, in our case, there exists log resolution in any
characteristic and our methods do not depend on it, our formulae
hold for any reduced curve in any characteristic. As a byproduct of our result we provide a formula for the log-canonical threshold of a non-reduced plane curve and also for a (non-necessarily simple) complete ideal of finite co-length in the power series ring $k[[x,y]]$ (see Remarks \ref{rem5} and \ref{rem6}).

Next, we briefly describe the contents of the paper. Section
\ref{preliminaries} reviews some definitions and properties we need for stating
and proving our results; moreover, we give an example of a curve with 8
branches whose minimal log resolution is represented by its dual
graph, object that is an important tool in our development. Our
example is used in the paper to explain notations and results.
In Section \ref{results} we state our main result, Theorem
\ref{gordo2}, which is proved in Section \ref{proofs}. One step
in the proof is Proposition \ref{kuwatilla}, which asserts that to
compute the log-canonical threshold of $C$, we only need to take
into account those divisors of the minimal log resolution $\pi$ of
$C$ associated with the first two characteristic exponents \cite[III.2]{Campillo} of each branch.
For the complex case, this fact can be
deduced either from \cite{Kuwata} or \cite{FJ}. Our proof is valid for any
characteristic, it is essentially algebraic and supported in some results from Delgado in
\cite{Delgado}. Finally, we add that the formula in \cite{Kuwata} for the two branches case is a consequence of Theorem \ref{gordo2} and an improvement of that formula is stated as Corollary \ref{corol} in Section \ref{results}.

Theorem \ref{gordo2}, has two parts: The first one determines  an exceptional divisor $E_k$ of $\pi$ which provides the log-canonical threshold. The second part uses this index $k$ to show the exact value of the log-canonical threshold in terms of the intersection multiplicities between branches and the first two maximal contact values of each branch \cite[IV.2]{Campillo}.

Being more specific for part one, we define a weight function (see (\ref{sigma})) over those vertices $\mathbf{v}_j$ in the dual graph $\Gamma(C)$ of $\pi$ given by Proposition \ref{kuwatilla}. This weight is easily computable as a difference that involves  at most the first two maximal contact values of each branch. For getting the minuend and subtrahend, we delete $\mathbf{v}_j$ from $\Gamma(C)$ and distinguish among branches attached, or not, to the connected component containing the initial vertex $\mathbf{v}_1$.  Set $\mathcal{V}$ those vertices with weight and adapted degree (see the paragraph below Example \ref{examp4}) larger that two. Then,  the first part shows that $E_k$ corresponds to the end vertex of a distinguished path in $\Gamma(C)$. This path is the only one joining $\mathbf{v}_1$ with a vertex in $\mathcal{V}$ such that the weights of their vertices in $\mathcal{V}$ are negative, while the ones of the remaining vertices in $\mathcal{V}$ are not.

In Section \ref{proofs}, we give the proofs of Proposition
\ref{kuwatilla} and Theorem \ref{gordo2} by means of several
auxiliary results. Lemmas \ref{torero} to \ref{banderillero} allow us to show Proposition \ref{kuwatilla} and they are also useful for the proof of Theorem \ref{gordo2}. The first part of this theorem is proved with the help of three more lemmas which are uniquely based in combinatorics on the dual graph. The main step to show its second part is Lemma \ref{lemamadre}, where a comparison among candidates for log-canonical threshold is given. The proof of this lemma is supported on previous results of the paper and on a suitable choice of different partitions of the set of branches of the curve $C$.

\section{Preliminaries}
\label{preliminaries}

 Let $R=k[[x,y]]$ be the formal power series ring with coefficients over
 an algebraically closed field $k$. Consider a reduced series $f\in R$
and its decomposition $f=f_1f_2\cdots f_r$ as a product of
irreducible elements $f_i$ in $R$.  Denote by $C$ (respectively,
$C_i$, $1\leq i\leq r$) the divisors on ${\rm Spec}(R)$ (that is,
the germs of plane curves) defined by $f$ (respectively, $f_i$). Assume also that $C$ is singular and $r>2$ or $r=2$ but $C_1$ and $C_2$ are not non-singular and transversal. A \emph{log resolution} of $C$ is a composition of finitely many blowing-ups
centered at closed points
\begin{equation}\label{blow}
\pi:   X = X_{m} \stackrel{\pi_{m}}{\longrightarrow}
X_{m-1} \longrightarrow \cdots \longrightarrow X_{1}
\stackrel{\pi_{1}}{\longrightarrow} X_0 = {\rm Spec}(R),
\end{equation}
$E_j$ being the exceptional divisor of $\pi_j$, $1\leq j\leq m$, such that
$(f)\cdot {\mathcal O}_X={\mathcal O}_X(-F)$, where $F$ is an
effective divisor on $X$ which has simple normal crossing support.
By an abuse of notation, the strict transform of $E_j$ on $X$ will
also be denoted by $E_j$. From now on we will assume that $\pi$ is a
\emph{minimal} (with respect to the number of involved blowing-ups)
log resolution of $C$. Notice that it exists and is unique (see
\cite{Brieskorn}, for instance).

According to the Introduction, if $C$, $\pi$ and $f$ are as above,
we can associate with each positive rational number $t$ a {\it
multiplier ideal} $\pi_{*} {\mathcal O}_X(K_{X|X_0}-\lfloor t F
\rfloor)$ which we will denote by ${\mathcal J}(X_0,t C)$. The
\emph{log-canonical threshold} of $C$, ${\rm lct}(C)$, will be the
smallest positive rational number $\iota_1$ such that ${\mathcal
J}(X_0,\iota_1 C)\not=R$.

Let $O$ be the closed point of ${\rm Spec} (R)$ and ${\mathcal
C}=\{P_1=O,P_2,\ldots,P_m\}$ the set of closed points such that
$P_j$ is the center of $\pi_{j}$, $1 \leq j \leq m$; ${\mathcal
C}$ is a  \emph{constellation} of infinitely near points
\cite{2-nai} and ${\mathcal C}$ can be written as a union of
constellations ${\mathcal C}=\cup_{i=1}^r{\mathcal C}_i$ such
that, for each $i=1,\ldots,r$, ${\mathcal C}_i$ is the
constellation of points of $\mathcal C$ through which the successive strict transforms (by the blowing-ups in (\ref{blow})) of the branch
$C_i$ pass. Given two points $P_j, P_k\in {\mathcal C}$ with $k\geq j$, $P_k$ is called \emph{infinitely near} to $P_j$ (and
denoted $P_k\gtrsim P_j$) if the composition of blowing-ups
$X_{k-1}\rightarrow X_{j-1}$ maps $P_k$ to $P_j$; notice that $\gtrsim$ is a
partial ordering on $\mathcal C$. If, in addition, $P_k\neq P_j$ and $P_k$ belongs
to the strict transform of $E_j$ on $X_{k-1}$, then $P_k$ is said to
be \emph{proximate} to $P_j$, which is denoted by $P_k\rightarrow
P_j$. When $P_k$ is proximate to 2 points (respectively, 1
point) of $\mathcal C$ we name $P_k$ a \emph{satellite}
(respectively, \emph{free}) point.

The \emph{first infinitesimal neighborhood} of a point $P_j \in
\mathcal{C}$ is the family of closed points
belonging to the exceptional divisor obtained by the blowing-up at
$P_j$ and the \emph{$l$th infinitesimal neighborhood of $P_j$} ($l >
1$) is (inductively defined) the set of points in the first
infinitesimal neighborhood of some point in the $(l-1)$th
infinitesimal neighborhood of $P_j$.

It is well-known that $\pi^*C=\tilde{C}+\sum_{j=1}^m \mathbf{n}_j
E^*_j$, where $E_j^*$ is the total transform of $E_j$ and
$\tilde{C}$ the strict transform of $C$, both on $X$,
$\mathbf{n}_j$ being the multiplicity of the strict transform of $C$ at
$P_j$, $1\leq j\leq m$. We have also that
 $$E_j=E_j^*-\sum_{P_k\rightarrow P_j} E_k^*.$$
Writing $\pi^*C=\tilde{C}+\sum_{j=1}^m b_jE_j$ and $K_{X/X_0}=\sum_{j=1}^m a_j E_j$ (the relative
canonical divisor), the following equality holds (notice that $C$ is assumed to be singular):
\begin{equation}
\label{WW} {\rm lct}(C)=\min \left\{{\overline \alpha}_j :=
\frac{a_j+1}{b_j}\mid j=1,2,\ldots,m \right\}.
\end{equation} In some occasions, the values ${\overline \alpha}_j $ will be
named \emph{candidates} for log-canonical threshold of $C$. The
\emph{proximity matrix} of $\mathcal C$ is the matrix
$\mathbf{P}=(p_{kj})_{k,j=1}^m$ defined by $p_{kk}=1$ for all $k$,
$p_{kj}=-1$ if $P_k \rightarrow P_j$ and $0$ otherwise. Notice
that $(b_1, b_2, \ldots,b_m)^t:=\mathbf{P}^{-1}(\mathbf{n}_1,
\mathbf{n}_2, \ldots,\mathbf{n}_m)^t$ and $(a_1, a_2,
\ldots,a_m)^t:=\mathbf{P}^{-1}(1,1, \ldots,1)^t$.

The dual graph of $C$, $\Gamma(C)$, is an important object in our
development. It is an oriented tree such that the strict transform
(on $X$) of each exceptional divisor $E_j$ is represented by a
vertex, ${\bf v}_j$.
Two vertices are connected by an edge if the
corresponding divisors meet. The strict transform (on $X$) $\tilde{C}_i$ of each component $C_i$, $i=1,\ldots,r$, is represented by an arrow, ${\bf a}_i$, which is a label of the vertex associated with the exceptional divisor whose strict transform meets $\tilde{C}_i$ (or, in other words, the maximum point of $\mathcal{C}_i$ for the
ordering $\lesssim$). Usually each vertex ${\bf v}_j$ is
labeled by the number $j$ (that is, the number of blowing-ups in the sequence (\ref{blow}) needed
to create the corresponding divisor). The initial vertex of the edge
that joins two vertices ${\bf v}_{j_1}$ and ${\bf v}_{j_2}$ is the
one labeled with $\min\{j_1,j_2\}$.

We use the dual graph complemented with the so-called proximity
(oriented) graph of $C$ for representing the minimal log resolution of $\pi$.
The proximity graph allows us to decide which vertices are involved
in the resolution of each branch. Its vertices correspond with the
points in $\mathcal{C}$ and its edges join proximate points. An edge
joining $P_k$ and $P_j$ $(k >j)$ is a continuous straight line
whenever $P_k$ is in the first infinitesimal neighborhood of $P_j$,
otherwise it is a dotted curved line. By convention, we will omit those dotted curved edges which are deduced from others. As in the dual graph, we label
with an arrow ${\bf a}_i$ the vertices corresponding with the maximum
point in $\mathcal{C}_i$.


\begin{exa}\label{example1}
{\rm

Figure 1 shows the proximity and dual graphs (where we have omitted the orientation)
of a reduced curve $C$
with 8 components, $C_1, C_2,\ldots,C_8$, defined by 8 irreducible
elements $f_1, f_2, \ldots,f_8\in R$, such that its minimal log resolution
$\pi$ is obtained by blowing-up a constellation, ${\mathcal
C}=\{P_j\}_{j=1}^{17}$, of 17 infinitely near points. The reader can see additional
labels in some vertices of the dual graph that will be explained in
forthcoming examples. One has that ${\mathcal C}=\cup_{i=1}^8
{\mathcal C}_i$, where
$${\mathcal C}_1=\{P_1,P_2,P_3,P_4,P_5,P_6,P_7\},\;\;{\mathcal C}_2=\{P_1,P_2,P_{3},P_{4},P_{5},P_{8}\},$$
$${\mathcal C}_3=\{P_1,P_2,P_{3},P_{4}, P_9, P_{10}, P_{11}\},\;\; {\mathcal C}_4=\{P_1,P_2,P_{3},P_{4}, P_9, P_{10}, P_{12},P_{13} \},$$
$${\mathcal C}_5=\{P_1,P_2,P_{16},P_{17}\}\;\;\mbox{ , }\;\; {\mathcal C}_6={\mathcal C}_7=\{P_1,P_2,P_{3},P_4, P_{14},P_{15}\}$$
$$\mbox{ and }\;\; {\mathcal C}_8=\{P_1,P_2,P_{16}\}.$$
Notice that, taking into account our mentioned convention, in the proximity graph we have omitted the dotted curved edge corresponding with the proximity $P_5\rightarrow P_3$ because it can be deduced from the proximity $P_6\rightarrow P_3$. The points $P_1,P_2, P_3, P_4, P_{9}, P_{10}, P_{12},P_{14},P_{15}$ and $P_{16}$ are
free and the remaining ones satellite. In addition the curves $C_6,C_7$ and $C_8$ are nonsingular.


}
\end{exa}

\begin{figure}[htb]
\begin{center}
 \setlength{\unitlength}{0.8cm}
\begin{picture}(10,7)(0,0)
\put(5,0){\circle*{0.2}}
\put(5,0){\line(0,1){1}}
\put(4.5,-0.2){{\tiny 1}}
\put(5,1){\circle*{0.2}}
\put(5,1){\line(1,1){1}}
\put(5,1){\line(-1,1){1}}
\put(4.5,1){{\tiny 2}}
\qbezier[30](5,1)(7,1.5)(7,3)
\put(6,2){\circle*{0.2}}
\put(5.3,2){\tiny 16}
\put(4,2){\circle*{0.2}}
\put(6,2){\line(1,1){1}}
\put(3.8,1.5){\tiny 3}
\put(6,2){\vector(-1,1){1}}

\put(4.8,2.5){${\bf a}_8$}

\put(4,2){\line(-1,1){1}}

\put(6,3.2){${\bf a}_7$}

\put(7,3){\circle*{0.2}}
\put(6.5,3){\tiny 17}
\put(7,3){\vector(1,1){1}}
\put(8.1,4){${\bf a}_5$}
\put(3,3){\circle*{0.2}}
\put(2.5,3){\tiny 4}
\put(3,3){\line(1,0){1}}
\put(4,3){\circle*{0.2}}
\put(4,3){\line(1,1){1}}
\put(5,4){\circle*{0.2}}
\put(5,4){\vector(1,0){1}}
\put(5.1,4.1){\tiny 15}

\put(5,4){\vector(1,-1){1}}

\put(6,4.1){${\bf a}_6$}
\put(4.2,3){\tiny 14}
\put(3,3){\line(1,1){1}}
\put(4,4){\circle*{0.2}}
\put(3,3){\line(-1,1){1}}
\put(2,4){\circle*{0.2}}
\put(2.3,3.8){\tiny 5}
\put(4.3,4){\tiny 9}
\put(2,4){\line(-1,1){1}}
\put(1,5){\circle*{0.2}}
\put(1.2,5){\tiny 6}

\put(1,5){\line(-1,1){1}}
\put(0,6){\circle*{0.2}}
\put(0,6.2){\tiny 7}
\put(0,6){\vector(1,0){1}}
\put(1.2,6){${\bf a}_1$}

\qbezier[40](4,2)(1,3)(1,5)
\qbezier[40](2,4)(0,4)(0,6)

\put(4,4){\line(1,1){1}}
\put(5,5){\circle*{0.2}}
\put(5.3,5.1){\tiny 10}
\put(5,5){\line(1,1){1}}
\put(6.2,6){\tiny 11}
\put(6,6){\circle*{0,2}}
\put(6,6){\vector(-1,0){1}}
\put(4.3,6){${\bf a}_3$}
\qbezier[30](4,4)(4,5)(6,6)
\put(5,5){\line(1,0){2}}
\put(6,5){\circle*{0.2}}
\put(6.1,5.2){\tiny 12}
\put(7,5){\circle*{0.2}}
\put(7.2,5){\tiny 13}
\put(7,5){\vector(1,1){1}}
\put(8.2,6){${\bf a}_{4}$}
\qbezier[30](5,5)(6,4)(7,5)


\qbezier[30](3,3)(3.5,4)(3,5)

\put(2,4){\line(1,1){1}}
\put(3,5){\circle*{0.2}}
\put(2.5,5){\tiny 8}
\put(3,5){\vector(0,1){1}}
\put(3.2,6){${\bf a}_2$}


\end{picture}
\end{center}

\begin{center}
\setlength{\unitlength}{0.8cm}
\begin{picture}(12,6)(0,0)
\put(0,3){\line(1,0){12}}

\put(0,3){\circle*{0.2}}\put(1,3){\circle*{0.3}}\put(2,3){\circle*{0.2}}\put(3,2.9){\tiny $\blacksquare$}\put(3.9,2.9){$\bigstar$}\put(4.9,2.9){\tiny $\blacksquare$}\put(5.9,2.9){$\bigstar$}\put(7,3){\circle*{0.3}}\put(8,3){\circle*{0.2}}\put(8.9,2.9){$\bigstar$}\put(10,3){\circle*{0.2}}\put(10.9,2.9){$\bigstar$}\put(12,3){\circle*{0.2}}

\put(-0.2,3.2){\tiny 1}\put(0.8,3.2){\tiny 2}\put(1.8,3.2){\tiny 3}\put(2.8,3.2){\tiny 6}\put(3.8,3.2){\tiny 7}\put(4.8,3.2){\tiny 5}\put(5.8,3.2){\tiny 8}\put(6.8,3.2){\tiny 4}\put(7.8,3.2){\tiny 9}\put(8.8,3.2){\tiny 11}\put(9.8,3.2){\tiny 10}\put(10.8,3.2){\tiny 13}\put(11.8,3.2){\tiny 12}

\put(1,3){\line(1,1){2}}\put(1.8,3.9){$\bigstar$}\put(1.5,4){\tiny 17}
\put(3,5){\circle*{0.2}}\put(2.5,5){\tiny 16}\put(3,5){\vector(1,0){1}}\put(4.1,5){${\bf a}_8$}

\put(2,4){\vector(1,0){1}}\put(3.1,4){${\bf a}_5$}

\put(4,3){\vector(1,1){1}}\put(5.1,4){${\bf a}_1$}

\put(6,3){\vector(1,1){1}}\put(7.1,4){${\bf a}_2$}

\put(9,3){\vector(1,1){1}}\put(10.1,4){${\bf a}_3$}

\put(11,3){\vector(1,1){1}}\put(12.1,4){${\bf a}_4$}


\put(7,3){\line(-1,-1){2}}\put(6.2,2){\tiny 14}
\put(6,2){\circle*{0.2}} \put(5,1){\circle*{0.3}}\put(5.2,1){\tiny 15}

\put(5,1){\vector(-1,-1){1}}
\put(5,1){\vector(1,-1){1}}

\put(4.3,0){${\bf a}_6$}
\put(6.2,0){${\bf a}_7$}

\end{picture}
\end{center}
\caption{Proximity and dual graphs of Example
\ref{example1}.}
\end{figure}
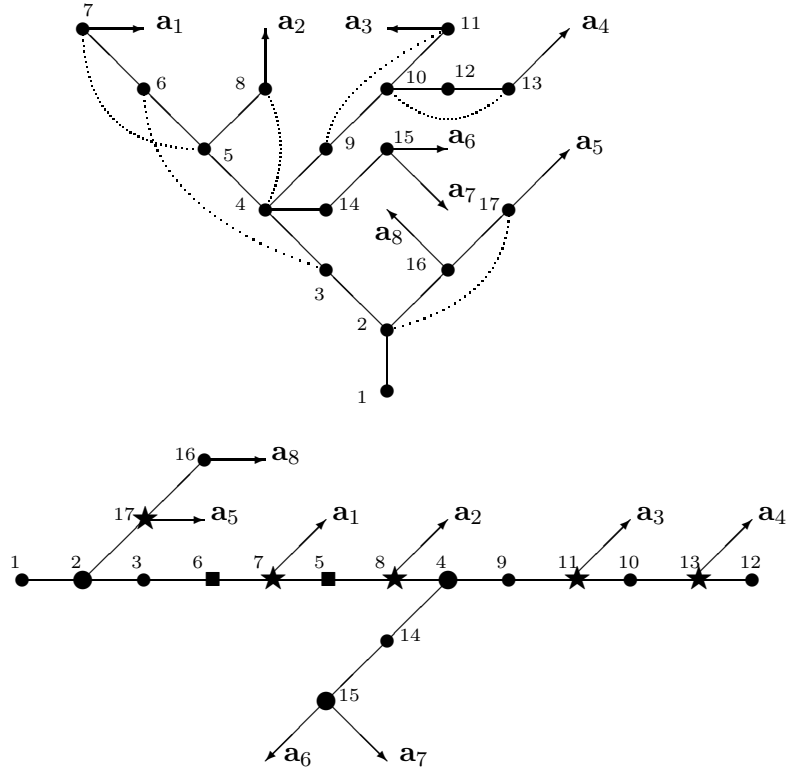

\section{Results}
\label{results}

In this section we keep the notation given in the previous one. Let $h$
be an irreducible element in $R$ such that $\pi$ is a log resolution
of the curve $H$ that it defines. Let
$\{\betabarra_l^{h}\}_{l=0}^g$ be its maximal contact values (see
Section 2 of Chapter IV in \cite{Campillo}, for instance). This set
of values is an increasing sequence of positive integers which is a
minimal generating set of the so-called semigroup of values of $H$. In
addition, they constitute an equivalent datum to the embedded
topological type of $H$ and, in the complex case, can be easily
computed from the set of Puiseux pairs given by a primitive
parametrization of the curve. When $H$ is nonsingular, the unique
defined maximal contact value is $\betabarra_0^{h}=1$. In this paper
and in this case, we also define $\betabarra_1^{h}$ as the number of
points of $\mathcal C$ through which the strict transforms of $H$
pass.

\begin{de}
{\rm Let $h$ and $H$ be as above. A \emph{terminal satellite point
for $h$ (or for $H$)} is a point $P_j\in {\mathcal C}$ such that it is
satellite and  the set
$$\left\{P_k\in {\mathcal C}\setminus \{P_j\}\mid\mbox{ the strict
transform of $H$ on $X_k$ passes through $P_k$ and } P_k\gtrsim
P_j \right\}$$ is either empty or its minimum (with respect to the
ordering $\gtrsim$) is a free point.}
\end{de}

Notice that the number of terminal satellite points for $h$
coincides with the index $g$ of the last maximal contact value
$\betabarra_g^{h}$ and that the part of the dual graph of $H$ up
to the $l$th terminal satellite point, $1 \leq l \leq g$, can be deduced from the set
of values $\{\betabarra_0^{h}, \betabarra_1^{h},  \ldots,
\betabarra_l^{h}\}$. When the curve $H$ is not singular, we denote
by $l_0^h$ the cardinality of the set of free points $P_j \in
{\mathcal C}$ through which the strict transforms of $H$ pass.
Otherwise, $l_0^h + 1$ will stand for the cardinality of the set of free points $P_j \in
{\mathcal C}$ satisfying the above condition and such that every terminal
satellite point for $h$  is infinitely near to $P_j$ (see
\cite{Delgado} for the source of the notation); in other words, $l_0^h+1$ is the length $\ell$ of the maximal chain of initial consecutive free points $P_1\lesssim \ldots \lesssim P_{\ell}$ such that the strict transforms of $H$ pass through $P_i$ for all $i\in \{1,\ldots,\ell\}$. When $h=f_i$ for
some $i\in \{1, 2, \ldots,r\}$, we have set $C_i$ instead of $F_i$ and, for simplicity, we will write
$\betabarra_k^i$ (respectively, $l_0^i$) instead of
$\betabarra_k^{f_i}$ (respectively, $l_0^{f_i})$.

Assume from now on that the curves $C_1,\ldots,C_n$ are singular
and the remaining ones, $C_{n+1},\ldots,C_r$, are nonsingular. For each
$i\in \{1, 2, \ldots,n\}$,  we denote by $P_{t_i}$ the minimum
terminal satellite point for $C_i$ with respect to the ordering
$\lesssim$.
The following two sets will be useful:
$${\mathcal T}:=\{P_{t_i}\mid i=1,\ldots,n\}$$
and
$${\mathcal F}:=\left\{P_j\in {\mathcal C} \mid  P_j \lesssim P_{t_i}
\mbox{ for some } P_{t_i}\in {\mathcal T}\right\}
\cup {\mathcal C}_{n+1}\cdots \cup {\mathcal C}_{r}.$$

Let us give an example with the aim of clarifying previous concepts.
\begin{exa}
{\rm

Consider again the curve in Example \ref{example1}. Then $n=5$ and, from the proximity graph, one can see that
$${\mathcal T}=\{P_{t_1}=P_7, P_{t_2}=P_{8}, P_{t_3}=P_{11}, P_{t_4}=P_{13}, P_{t_5}=P_{17}\}.$$
In the dual graph depicted in Figure 1, we have labeled those vertices representing divisors $E_j$ such that $P_j$ is a satellite point and it is not in $\mathcal T$ (respectively, it is in $\mathcal T$) with a square (respectively, a star).

Notice that, in this example, ${\mathcal F}={\mathcal
C}$.

The free points of ${\mathcal C}_1$ such that the unique terminal
satellite point in ${\mathcal C}_1$ is proximate to them (that is, the initial free
points in the branch of the proximity graph corresponding to
${\mathcal C}_3$) are $P_1, P_2,P_3$ and $P_4$. Therefore
$l_0^1+1=4$.
Similarly $l_0^2=3, l_0^3=5, l_0^4=6, l_0^5=2, l_0^6=l_0^7=6$ and $l_0^8=3$.

}
\end{exa}

Next, we will state our first result which, in the complex case, can
be deduced from the proof of \cite[Th. 1.3]{Kuwata}. It shows that
the log-canonical threshold of $C$ depends only on the part of the
dual graph determined by the values $\betabarra_0^i$ and $\betabarra_1^i$
of every component $C_i$ of $C$.
\begin{pro}\label{kuwatilla}
The log-canonical threshold of $C$ is the minimum of the elements
in the set
\[\left\{{\overline \alpha}_j =\frac{a_j+1}{b_j}\mid P_j\in{\mathcal F}\right\}.
\]
\end{pro}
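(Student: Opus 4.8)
The plan is to show that the candidate $\overline{\alpha}_j$ attaining the minimum in (\ref{WW}) can always be chosen with $P_j\in\mathcal{F}$. I would start by recalling the explicit formulae for $b_j$ and $a_j$ in terms of the proximity structure: since $(b_1,\ldots,b_m)^t=\mathbf{P}^{-1}(\mathbf{n}_1,\ldots,\mathbf{n}_m)^t$ and $(a_1,\ldots,a_m)^t=\mathbf{P}^{-1}(1,\ldots,1)^t$, one has the proximity equalities $b_j=\sum_{P_k\to P_j}b_k+\mathbf{n}_j$ and $a_j=\sum_{P_k\to P_j}a_k+1$, where the sum ranges over the points proximate to $P_j$ among those in $\mathcal C$ appearing later in the sequence. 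Here $b_j$ is the multiplicity of $F$ along $E_j$, i.e. $b_j=\sum_{i=1}^r [C_i\cdot E_j^*]$, and each summand is the value at $P_j$ of the relevant maximal-ideal filtration attached to the branch $C_i$; similarly $a_j+1$ is the order of vanishing of the relative canonical class plus one, a quantity governed purely by the proximity graph. The first step is thus to express $\overline\alpha_j=(a_j+1)/b_j$ in a form amenable to the comparison argument.

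Next I would exploit the structure of Delgado's results in \cite{Delgado}, which describe, for a single branch $H$ with maximal contact values $\{\betabarra_l^h\}$, exactly how the numbers $b_j$ and $a_j$ behave along the chain of free and satellite points between consecutive terminal satellite points. The key observation is that beyond the minimum terminal satellite point $P_{t_i}$ of a branch $C_i$ (that is, for points strictly larger than $P_{t_i}$ in the ordering $\lesssim$ that lie only on $\mathcal C_i$-type strata), the contribution of $C_i$ to $b_j$ grows at least as fast as the contribution to $a_j+1$, in the sense that the ratio $(a_j+1)/b_j$ is non-decreasing as one moves away from $\mathbf v_1$ through such a region. Concretely, for a point $P_k$ in the $l$th infinitesimal "tail" past $P_{t_i}$, one compares $\overline\alpha_k$ with $\overline\alpha_{k'}$ for $P_{k'}$ its immediate predecessor toward $\mathbf v_1$, and shows $\overline\alpha_{k'}\le\overline\alpha_k$; the cross-terms coming from other branches $C_{i'}$ that have already separated from $C_i$ before $P_{t_i}$ only help the inequality, since they add equal positive quantities $[C_{i'}\cdot E_{k'}^*]=[C_{i'}\cdot E_k^*]$ to both $b_{k'}$ and $b_k$ while contributing nothing new to $a$. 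This monotonicity, applied branch by branch, reduces every candidate $\overline\alpha_j$ with $P_j\notin\mathcal F$ to some $\overline\alpha_{j'}$ with $P_{j'}\in\mathcal F$ lying "below" it, whence the minimum over all of $\mathcal C$ equals the minimum over $\mathcal F$.

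The main obstacle I anticipate is handling the satellite chains and the free points interspersed in the "tail" regions correctly: the numbers $\mathbf{n}_j$ for strict transforms of $C_i$ beyond $P_{t_i}$ are governed by the Euclidean-algorithm combinatorics of the maximal contact values, and one must verify the monotonicity of $(a_j+1)/b_j$ simultaneously along free steps (where $b_j$ increases by the full stalk of multiplicities and $a_j$ by $1$) and along satellite steps (where the proximity relation forces a subtraction). Isolating a clean invariant — most naturally writing $\overline\alpha_j$ as a weighted average of "slopes" associated to the points proximate to $P_j$, so that the monotonicity follows from a convexity/mediant inequality — is the crux. Once that lemma is in place, the statement follows by a straightforward induction on the number of points in $\mathcal C\setminus\mathcal F$, peeling off one maximal point of $\mathcal C$ (for the ordering $\lesssim$) lying outside $\mathcal F$ at a time and using that its candidate is dominated by that of its neighbour closer to $\mathbf v_1$, which by construction lies in $\mathcal F$ or closer to it. The nonsingular branches $C_{n+1},\ldots,C_r$ require no reduction at all since all of $\mathcal C_i$ is already in $\mathcal F$ by definition, so they enter the argument only through their additive contributions to the $b_j$'s, exactly as the already-separated branches above.
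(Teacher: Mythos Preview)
Your plan has a genuine gap at its central step. You assert that for a branch $C_{i'}$ that has already separated from $C_i$ before $P_{t_i}$, the contributions to the two denominators satisfy $[C_{i'}\cdot E_{k'}^*]=[C_{i'}\cdot E_k^*]$, so that such branches ``only help the inequality''. This equality is false in general. The quantity $[C_{i'}\cdot E_j^*]$ equals $I(\varphi_j,f_{i'})$ for a curvette $\varphi_j$ at $P_j$, and by Noether's formula this depends on the multiplicities of $\varphi_j$ at the common infinitely near points. Beyond $P_{t_i}$ the chain contains satellite points, so the multiplicity $\betabarra_0^{\varphi_j}$ of the curvette at $P_1$ (and at subsequent points) changes as $j$ varies; hence $I(\varphi_{k'},f_{i'})\neq I(\varphi_k,f_{i'})$ in general, even when $C_{i'}$ separated long ago. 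Your stepwise monotonicity argument therefore collapses precisely on the satellite steps you flag as the ``main obstacle'' but never actually treat. A related confusion is that you alternate between the dual-graph order $\leq$ (``predecessor toward $\mathbf v_1$'') and the constellation order $\lesssim$ (``peeling off one maximal point''); these are different, and the induction you sketch does not respect either one cleanly.

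The paper avoids both difficulties by comparing $\overline\alpha_k$ directly with $\overline\alpha_{t_{i_0}}$ rather than with an immediate neighbour. Lemma~\ref{bombero} gives $\overline\alpha_j=(\betabarra_0^{\varphi}+\betabarra_1^{\varphi})/\sum_s I(\varphi,f_s)$ for a curvette $\varphi$ at $P_j$ (with an extra positive $\epsilon$ in the numerator when $P_j$ lies strictly beyond $P_{t_{i_0}}$). One then partitions the branch indices according to the contact pair with a curvette $\psi$ at $P_{t_{i_0}}$ and uses Delgado's intersection formulae (Lemma~\ref{torero}) to write each $I(\varphi,f_s)$ and $I(\psi,f_s)$ explicitly. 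For $s$ sharing the terminal satellite ($s\in J_3(\psi)$) the comparison reduces to the known one-branch threshold $1/\betabarra_0^s+1/\betabarra_1^s$; for all other $s$ the key identity $\betabarra_1^{\varphi}/\betabarra_0^{\varphi}=\betabarra_1^{\psi}/\betabarra_0^{\psi}$ (both curvettes share $P_{t_{i_0}}$ as their minimum terminal satellite point) makes the required inequality $(\betabarra_0^\varphi+\betabarra_1^\varphi)I(\psi,f_s)\ge(\betabarra_0^\psi+\betabarra_1^\psi)I(\varphi,f_s)$ an equality. This is exactly the ``clean invariant'' you were looking for, and it replaces your stepwise induction entirely.
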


 Our main result uses a weight function for some vertices of $\Gamma(C)$. Before stating it, we introduce these weights and some necessary concepts.

\begin{de}
{\rm Let $h_1$ and $h_2$ be two irreducible elements in $R$ such
that $\pi$ is a common log resolution of the curves $H_1$ and $H_2$
($\neq H_1$) that they define. The \emph{contact pair} of $h_1$ and
$h_2$, $(h_1\mid h_2)$, is defined to be the couple of integers
$(q,c)$ such that: \begin{itemize} \item $q$ is the number of common
terminal satellite points for $h_1$ and $h_2$.
\item $c$ is the cardinality of the set of free points $P_j \in \mathcal{C}$ such that the strict transforms of $H_1$ and $H_2$ pass through $P_j$ and $P_j$ is infinitely near to
the last common terminal satellite point (if any).

\end{itemize} }
\end{de}

An equivalent definition of  contact pair using the
Hamburger-Noether expansion of the branches is given in
\cite{Delgado}.

\begin{exa}
{\rm Let us return to Example \ref{example1}. $C_1$ and $C_3$ do
not share any terminal satellite point and, moreover, the free points in ${\mathcal C}_1 \cap {\mathcal
C}_3$ are $P_1, P_2, P_3$ and $P_4$; therefore $(f_1\mid
f_3)=(0,4)$.

Now, suppose that $h_1$ and $h_2$ are irreducible elements
in $R$ such that the minimal log resolution of the curve defined by $h_1$ (respectively, $h_2$) is given by blowing-up at the points $P_1, P_2, P_{16}, P_{17}$
and $P_{18}, P_{19}$ (respectively, and $P_{20}, P_{21}$). $P_{18}, P_{19}, P_{20}, P_{21}$ are new points enlarging $\mathcal{C}$ up to a new configuration $\mathcal{C}'$
such that all of them are proximate to $P_{17}$, $P_{18}$ and $P_{20}$ are two distinct points in the first infinitesimal neighborhood of $P_{17}$, $P_{19} \gtrsim P_{18}$ and $P_{21} \gtrsim P_{20}$. Set also $\pi$ the log resolution given by $\mathcal{C}'$. Then, each curve $H_1$ and $H_2$ has two terminal satellite points and $(h_1|h_2)=(1,0)$ because $P_{17}$ is their last common infinitely near point.

}
\end{exa}

\begin{de}
{\rm Two components ${C}_{i_1}$ and ${C}_{i_2}$ of the curve $C$
are called to be  \emph{separated} at a point $P_j\in {\mathcal
C}$ when $\max_{\gtrsim} ({\mathcal C}_{i_1}\cap {\mathcal C}_{i_2})
= P_j$.

Moreover, ${C}_{i_1}$ and ${C}_{i_2}$ will be \emph{freely
separated} at $P_j$ if they are separated at $P_j$ and $(f_{i_1}\mid
f_{i_2}) = (0,c)$ for some $c\leq \min\{l_0^{{i_1}},l_0^{{i_2}}\}$.

Finally, a point $P_j\in {\mathcal C}$ is an \emph{initial
separating point} if at least two components ${C}_{i_1}$ and
${C}_{i_2}$ of $C$ are freely separated at $P_j$.}
\end{de}

\begin{rem}
{\rm
Notice that if two  components of $C$  are freely separated at a point $P_j$, then $P_j$ must be a free point.
}
\end{rem}

The above defined set $\mathcal{T}$ and the set
 $$\mathcal{S} := \{ \mbox{initial separating
points of $\mathcal C$} \}$$ will be useful for our purposes. Notice that the points in $\mathcal T$  (respectively, $\mathcal S$) are satellite (respectively, free) and, therefore, $\mathcal T\cap \mathcal S=\emptyset$. Moreover ${\mathcal T}\cup {\mathcal S}\subseteq {\mathcal F}$.

\begin{exa}
\label{examp4}
{\rm In Example \ref{example1} and from the proximity graph one can deduce the following:

$C_1$ and $C_2$ are separated at $P_5$, that is a satellite point; therefore $C_1$ and $C_2$ are not freely separated.

$C_1$ and $C_3$ (respectively, $C_4$, $C_6$, $C_7$) are separated at $P_4$, that is a free point; however they are not freely separated because $(f_1\mid f_i)=(0\mid 4)$ for $i\in\{3,4,6,7\}$ and $4=l_0^1+1$.

$C_1$ and $C_5$ (respectively, $C_8$) are freely separated at $P_2$ because $(f_1\mid f_5)=(0,2)$ (respectively, $(f_1\mid f_8)=(0,2)$) and $2\leq \min\{l_0^1=3,l_0^5=2\}$ (respectively, $2\leq \min\{l_0^1=3,l_0^8=3\}$).

$C_3$ and $C_4$ are separated at $P_{10}$, that is a free point, but they are not freely separated because $(f_3\mid f_4)=(0,6)$ and $6=l_0^3+1$.

Analogously, $C_3$ (respectively, $C_4$) and $C_6$ are freely separated at $P_4$. $C_3$ (respectively, $C_4$) and $C_7$ are also freely separated at $P_4$. $C_6$ and $C_7$ are freely separated at $P_{15}$.
$C_8$ and $C_5$ are not freely separated at $P_{16}$ because $(f_8\mid f_5)=(0,3)$ and $3=l_0^5+1$.
$C_1$ (respectively, $C_2$, $C_3$, $C_4$, $C_6$, $C_7$) and $C_5$ are freely separated at $P_2$. The same happens taking $C_8$ instead of $C_5$.

Finally, we add that the set of initial separating points of the curve $C$ in this example is
${\mathcal S}=\{P_2,P_4, P_{15}\}$. These points have been marked with thicker dots in the dual graph depicted in Figure 1. }
\end{exa}


Returning to our development, where $C$ is a reduced curve with $r$ branches, let ${\mathcal V}_{\mathcal F}$ be the set of vertices ${\bf v}_j$ of $\Gamma (C)$ such that $P_j\in {\mathcal F}$ and consider the following subsets of ${\mathcal V}_{\mathcal F}$:
$${\mathcal V}_{\mathcal T}:= \{ \mathbf{v}_j | P_j
\in {\mathcal T} \},\;\;{\mathcal V}_{\mathcal S}:= \{ \mathbf{v}_j |
P_j \in {\mathcal S} \},\;\;
{\mathcal V}_{\mathrm{free}}:= \{ \mathbf{v}_j\in {\mathcal V}_{\mathcal F} |
P_j \mbox{ is a free point}\},$$
$$
{\mathcal V}_{\mathrm{end}}:= \{\mathbf{v}_j \in {\mathcal V}_{\mathcal F} \; | \; \mbox{ $\mathbf{v}_j$ has  degree 1}\}\;\;\mbox{ and}\;\;{\mathcal V}:={\mathcal V}_{\mathcal T}\cup {\mathcal V}_{\mathcal
S}.
$$
${\mathcal V}_{\mathrm{end}}$ is a subset of the set formed with the vertex $\bv_1$ and those vertices $\bv_j$ such that $P_j$ is a maximal free point of ${\mathcal F}$ (with respect to the ordering $\lesssim$).
 Notice that the set ${\mathcal V}$ can be easily located in the dual graph because it is the subset of vertices in ${\mathcal V}_{\mathcal F}$ with \emph{adapted degree} greater than or equal to 3, where the \emph{adapted degree} of a vertex ${\bf v}_j\not={\bf v}_1$ (respectively, ${\bf v}_1$) is defined as the sum of its degree and the number of arrows labeling it (respectively, one plus the number we have just defined).

As usual, a \emph{path} $\gamma$ in $\Gamma(C)$ is an
ordered sequence of different vertices  of $\Gamma(C)$ such that
two consecutive ones are joined by an (oriented) edge. If $a$ is
the initial vertex, in$(\gamma)$, and $b$ the terminal vertex (which can also be given by an arrow, if any, that labels it), ter$(\gamma)$,
we will denote $\gamma$ by $[a,b]$ and also, by an abuse of notation, its set of vertices. Moreover, we will use $]a,b]$ (respectively, $]a,b[$) to denote the path (or its set of vertices) obtained after removing from $\gamma$ the initial vertex (respectively, the initial and final vertices) and its incident edge (respectively, their incident edges).


Let $\leq$ denote the order induced by the (oriented) tree $\Gamma(C)$ in its set of vertices, that is, given two vertices ${\bf v}_{j_1}$ and ${\bf v}_{j_2}$, ${\bf v}_{j_1}\leq {\bf v}_{j_2}$ means that ${\bf v}_{j_1}$ belongs to $[{\bf v}_1,{\bf v}_{j_2}]$. By convention, if ${\bf a}_i$ is an arrow that is a label of ${\bf v}_{j_2}$ then ${\bf v}_{j_1}\leq {\bf a}_i$ will mean ${\bf v}_{j_1}\leq {\bf v}_{j_2}$.

Also, for every vertex ${\bf v}_j\in {\mathcal V}_{\mathcal F}$, we define
\begin{equation}
\label{order+}
{\bf v}_j^{<}:=\{{\bf a}_i \mid {\bf v}_j \nleq
{\bf a}_i\}\;\;\;\mbox{and}\;\;\; {\bf v}_j^{\geq}:=\{{\bf a}_i
\mid {\bf v}_j \leq {\bf a}_i\}.
\end{equation}
Furthermore we consider the map $\sigma: {\mathcal V}_{\mathcal F}\rightarrow \mathbb{Z}$
given by
\begin{equation}
\label{sigma}
\sigma({\bf v}_j)=\sum_{{\bf a}_{i}\in {\bf v}_j^{<}} c_{ji}
\betabarra_0^i-\sum_{{\bf a}_{i}\in {\bf v}_j^{\geq }}
\betabarra_0^i,
\end{equation}
where
\begin{equation*}
c_{ji}:= \left \{ \begin{array}{ll} \text{card} \left([{\bf v}_1,
{\bf a}_i] \cap [{\bf v}_1, {\bf v}_{j}]\cap {\mathcal V}_{\mathrm{free}}\right)  & \text{if} \; \;
\text{ter}\left( [{\bf v}_1, {\bf a}_i] \cap [{\bf v}_1,
{\bf v}_{j}]\right) \in \mathcal{S}, \\
  \betabarra_1^i/\betabarra_0^i & \text{otherwise}.
\end{array} \right.
\end{equation*} \vspace{1mm}

The following definition will be useful in the next section and, moreover, it will allow us to clarify the meaning of the coefficients $c_{ij}$ and the condition $\text{ter}\left( [{\bf v}_1, {\bf a}_i] \cap [{\bf v}_1,
{\bf v}_{j}]\right) \in \mathcal{S}$ appearing in the above formula (see Remark \ref{nota2}).

\begin{de}
{\rm Consider a point $P_j\in {\mathcal C}$, ${\mathcal C}$ being
the configuration of the mentioned resolution $\pi$ of $C$. A
\emph{curvette} at $P_j$ will be an irreducible element $\varphi\in
R$ defining a  divisor on ${\rm Spec}(R)$ different from $C_i$ for
all $i\in \{1, 2, \ldots,r\}$,  whose strict transform on $X$ is not
singular and meets $E_j$ transversally at a regular point.
}
\end{de}

\begin{rem}\label{nota2}
{\rm
Notice that, if $\bv_j\in {\mathcal V}_{\mathcal F}$ and ${\bf a}_{i}\in {\bf v}_j^{<}$, then
the condition $\text{ter}\left( [{\bf v}_1, {\bf a}_i] \cap [{\bf v}_1,
{\bf v}_{j}]\right) \in \mathcal{S}$ is equivalent to say that a curvette at $P_j$, $\psi$, and $C_i$ are freely separated (as components of a reduced curve that contains both branches). Moreover, in this case, the integer $c_{ji}$ appearing in the definition of the map $\sigma$ coincides with the integer $c$ such that $(\psi \mid f_i)=(0,c)$.
}
\end{rem}

\begin{rem}\label{nota}
{\rm
The sets ${\bf v}_j^{<}$ and ${\bf v}_j^{\geq}$ do not change when ${\bf v}_j$
 runs over a path $]{\bf v}_{j_1},{\bf v}_{j_2}]$ such that ${\bf v}_{j_1},{\bf v}_{j_2}\in {\mathcal V}\cup {\mathcal V}_{\mathrm{end}}$ and $]{\bf v}_{j_1},{\bf v}_{j_2}[\cap {\mathcal V}=\emptyset$. Therefore the map $\sigma$ is constant in such a path.
}
\end{rem}

Let us denote by $I(f_i,f_s)$ the intersection multiplicity of two
different components $C_i$ and $C_s$, $0 \leq i,s \leq r$, of $C$.
We are ready to state our main result, which is:

\begin{teo}\label{gordo2}
Let $C$ be a singular reduced (germ of) plane curve with $r$ branches and $\mathcal{C}$
(respectively, $\Gamma (C)$) the constellation of infinitely near
points (respectively, dual graph) associated with its minimal log resolution. Consider the subsets ${\mathcal
T}$ and ${\mathcal S}$ of  $\; \mathcal{C}$ and the subset of
vertices ${\mathcal V}={\mathcal V}_{\mathcal T}\cup {\mathcal V}_{\mathcal S}$ of $\; \Gamma (C)$ above defined. Then:

\textbf{(1)} There exists a vertex ${\bf v}_k\in {\mathcal V}$
satisfying the conditions:
\begin{itemize}
\item[(\textit{a})] $\sigma ({\bf v}_j)< 0$ for all ${\bf v}_j\in [{\bf v}_1, {\bf v}_{k}]\cap {\mathcal V}$ and
\item[(\textit{b})] $\sigma ({\bf v}_j)\geq 0$ for all ${\bf v}_j\in {\mathcal V}\setminus [{\bf v}_1, {\bf v}_{k}]$.
\end{itemize}

 \textbf{(2)} The log-canonical threshold of $C$ is the value ${\overline \alpha}_{k}$ defined in the above equality (\ref{WW}) and it can be computed as follows:
\begin{itemize}
\item If ${\bf v}_k={\bf v}_{t_i}\in {\mathcal V}_{\mathcal T}$, then
$${\overline \alpha}_k={\overline \alpha}_{t_i}=
\frac{\betabarra_0^i+\betabarra_1^i}{\sum_{s=1}^r \delta_{is}},$$
where
\begin{equation*}
\delta_{is}= \left \{ \begin{array}{ll}
\betabarra_0^i\betabarra_1^s
& \mbox{if either $s=i$, or $s\not=i$ and
$\betabarra_0^i\betabarra_1^s=\betabarra_1^i\betabarra_0^s\leq
I(f_i,f_s)$},\\
I(f_i,f_s)
& \text{otherwise}.
\end{array}
\right.
\end{equation*}

\item If ${\bf v}_k\in  {\mathcal V}_{\mathcal S}$, then
$${\overline \alpha}_k=\frac{\betabarra_0^{i_1}
\betabarra_0^{i_2}+I(f_{i_1},f_{i_2})}{\betabarra_0^{i_1}I(f_{i_1},f_{i_2})
+\betabarra_0^{i_2} \sum_{1 \leq s \leq r,\; s \neq i_1} I(f_{i_1},f_{s})},$$ where
$C_{i_1}$ and $C_{i_2}$ are any two components which are freely separated
at $P_{k}$.

\end{itemize}

\end{teo}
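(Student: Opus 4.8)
The plan is to reduce everything to the formula (\ref{WW}) together with Proposition \ref{kuwatilla}, which tells us that the minimum defining $\mathrm{lct}(C)$ can be sought among the candidates $\overline\alpha_j$ with $P_j\in\mathcal F$, hence among the vertices in $\mathcal V_{\mathcal F}$. The first task is to express the numbers $a_j+1$ and $b_j$ for such $j$ in terms of maximal contact values and intersection multiplicities. Using $(b_1,\dots,b_m)^t=\mathbf P^{-1}(\mathbf n_1,\dots,\mathbf n_m)^t$ and $(a_1,\dots,a_m)^t=\mathbf P^{-1}(1,\dots,1)^t$, the quantity $b_j=\sum_i I(f_i,\text{curvette at }P_j)$ decomposes as a sum over branches of intersection numbers, and by Delgado's results in \cite{Delgado} each such term is either $\betabarra_0^i\betabarra_1^s$-type or a genuine intersection multiplicity $I(f_i,f_s)$, according to whether the branch $C_s$ ``separates freely'' from the curvette before or after the relevant terminal satellite point. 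This is exactly the dichotomy recorded in the definition of $\delta_{is}$ and in Remark \ref{nota2}; similarly $a_j+1$ is read off from the relative canonical divisor and equals $\betabarra_0^i+\betabarra_1^i$ when $P_j=P_{t_i}$, and $\betabarra_0^{i_1}\betabarra_0^{i_2}+I(f_{i_1},f_{i_2})$ when $P_j\in\mathcal S$. So the candidate at each $\mathbf v_j\in\mathcal V$ has precisely the closed form asserted in part (2); by Remark \ref{nota} the candidate is constant along each segment of $[\,\mathbf v_1,\mathbf v_j]$ between consecutive elements of $\mathcal V\cup\mathcal V_{\mathrm{end}}$, so it suffices to compare the finitely many values attached to vertices of $\mathcal V$.

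Next I would establish part (1). The point is that the sign of $\sigma(\mathbf v_j)$ governs whether, moving one step further from $\mathbf v_1$ past $\mathbf v_j$, the candidate $\overline\alpha$ decreases or increases: a direct (but careful) manipulation of the mediant-type inequalities comparing $\frac{a_j+1}{b_j}$ at adjacent relevant vertices shows that crossing $\mathbf v_j$ strictly lowers the candidate exactly when $\sigma(\mathbf v_j)<0$, leaves it essentially controlled when $\sigma(\mathbf v_j)\ge 0$. This is the content of the ``three more lemmas based on combinatorics of the dual graph'' promised in the introduction, and I would phrase it as: along any path from $\mathbf v_1$, the sequence of candidates at successive vertices of $\mathcal V$ is unimodal with its minimum at the last vertex before the first non-negative $\sigma$. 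Since $\Gamma(C)$ is a tree, the vertices $\mathbf v_j\in\mathcal V$ with $\sigma(\mathbf v_j)<0$ form a connected subtree containing $\mathbf v_1$ (one must check: if $\sigma<0$ at a vertex then $\sigma<0$ at all earlier vertices of $\mathcal V$, a monotonicity statement one proves from the explicit formula for $\sigma$ by noting that passing to a predecessor in $\mathcal V$ only moves arrows from $\mathbf v_j^{<}$ into $\mathbf v_j^{\ge}$ with nonincreasing coefficients); and along each branch of that subtree there is a unique deepest vertex. To get a single $\mathbf v_k$ one argues that the candidate is globally minimized at the deepest such vertex on the branch where it is smallest, and that at a branching vertex $\sigma\ge 0$ forces the minimum not to lie beyond it on the other branches — this is where conditions (a) and (b) pin down a unique $k$.

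Finally, Lemma \ref{lemamadre} does the decisive comparison: it shows that the candidate $\overline\alpha_k$ at the vertex singled out by part (1) is actually $\le\overline\alpha_j$ for every $P_j\in\mathcal F$, not merely for $P_j\in\mathcal T\cup\mathcal S$, by grouping the branches into suitably chosen partitions and applying the mediant inequality $\frac{p_1+p_2}{q_1+q_2}\ge\min\!\big(\frac{p_1}{q_1},\frac{p_2}{q_2}\big)$ repeatedly to absorb the ``intermediate'' candidates along each segment into the endpoint candidates in $\mathcal V$. Combined with Proposition \ref{kuwatilla}, this yields $\mathrm{lct}(C)=\overline\alpha_k$, and substituting the closed forms for $a_k+1$ and $b_k$ computed in the first step gives the two displayed formulae. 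I expect the main obstacle to be precisely Lemma \ref{lemamadre}: controlling all the ``off-$\mathcal V$'' candidates simultaneously requires the right bookkeeping of which branches contribute $\betabarra_0^i\betabarra_1^s$ versus $I(f_i,f_s)$ to each denominator, and the partitions of the branch set must be chosen so that every mediant step goes the right way; the free-separation condition and the precise definition of the adapted degree are what make this bookkeeping consistent, but verifying it in all configurations (satellite endpoint vs. separating endpoint, several branches sharing initial free points, etc.) is the technical heart of the argument.
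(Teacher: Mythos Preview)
Your overall architecture matches the paper's: reduce to $\mathcal V_{\mathcal F}$ via Proposition~\ref{kuwatilla}, compute the candidates $\overline\alpha_j$ using curvettes and Delgado's intersection formulae (Lemmas~\ref{torero}, \ref{bombero}, \ref{banderillero}), prove a purely combinatorial statement about the sign pattern of $\sigma$ for Part~(1), and then use Lemma~\ref{lemamadre} to show that $\overline\alpha_k$ dominates from below all other candidates. Two points, however, are genuine gaps.

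First, your appeal to Remark~\ref{nota} is misplaced: that remark says that the \emph{sets} $\mathbf v_j^{<}$, $\mathbf v_j^{\ge}$ (and hence the weight $\sigma$) are constant along a segment $]\mathbf v_{k_1},\mathbf v_{k_2}]$ between consecutive vertices of $\mathcal V\cup\mathcal V_{\mathrm{end}}$, not that the candidate $\overline\alpha_j=(a_j+1)/b_j$ is constant there. It is not: along a run of free points, for instance, $a_j+1$ increments by~$1$ and $b_j$ by $\sum_s\betabarra_0^s$ over the relevant branches. So one cannot simply ``compare the finitely many values attached to vertices of $\mathcal V$''; the intermediate vertices must be handled, and this is exactly what Lemma~\ref{lemamadre} does (as you yourself say in your third paragraph). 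The paper's proof of that lemma is a four-case direct computation (according to whether each endpoint lies in $\mathcal V_{\mathcal T}$ or in $\mathcal V_{\mathcal S}\cup\mathcal V_{\mathrm{end}}$), reducing each inequality $\overline\alpha_j\ge\overline\alpha_{k_l}$ to an inequality of the shape $(\text{nonnegative factor})\cdot\sigma(\mathbf v_{k_2})\ge 0$ or $\le 0$; no mediant trick is needed, but the partition $\{J_1,J_2,J_3,J_4\}$ (and the refinement $J_{4,1},J_{4,2}$ in the free case) is essential bookkeeping.

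Second, your argument for Part~(1) is incomplete at the key step. Monotonicity of $\sigma$ (Lemma~\ref{minero}) does give you that the set $\{\mathbf v_j\in\mathcal V_{\mathcal F}:\sigma(\mathbf v_j)<0\}$ is downward-closed, hence a connected subtree rooted at $\mathbf v_1$. But to obtain a \emph{single} $\mathbf v_k$ satisfying (a) and~(b) you need this subtree to be a \emph{chain}, and you try to extract this from comparing candidate values. That mixes Part~(2) into Part~(1) and, more importantly, does not work: the statement (a)--(b) is about $\sigma$ alone. The paper settles this with a short, purely combinatorial lemma: if $\mathbf v_{j_1},\mathbf v_{j_2}$ are incomparable and both have $\sigma<0$, then from $\mathbf v_{j_1}^{\ge}\subseteq\mathbf v_{j_2}^{<}$ one gets $\sum_{\mathbf a_i\in\mathbf v_{j_2}^{\ge}}\betabarra_0^i>\sum_{\mathbf a_i\in\mathbf v_{j_1}^{\ge}}\betabarra_0^i$, and the symmetric argument gives the reverse strict inequality, a contradiction. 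This is the missing idea you need; once you have it, Lemma~\ref{alonso} and the check that $\mathbf v_k\notin\mathcal V_{\mathrm{end}}\setminus\mathcal V$ (using that an end vertex carries at most one arrow, attached to a nonsingular branch) finish Part~(1) without any reference to the $\overline\alpha_j$'s.
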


\begin{rem}
{\rm

Notice that the vertex ${\bf v}_{k}$ mentioned in Theorem \ref{gordo2} can be easily obtained applying, to the graph $\Gamma(C)$, an obvious variant of the well-known breadth-first search strategy used in  graph theory.

}
\end{rem}

\begin{exa}\label{example}
{\rm

Consider the curve in Example \ref{example1} again. The sets of
maximal contact values of the components $C_1, C_2, \ldots,C_5$
are, respectively, $\{\betabarra_0^1,\betabarra_1^1\}=(5,17\}$,
$\{\betabarra_0^2,\betabarra_1^2\}=\{3,11\}$,
$\{\betabarra_0^3,\betabarra_1^3\}=\{2,11\}$,
$\{\betabarra_0^4,\betabarra_1^4\}=\{2,13\}$ and $\{\betabarra_0^5,\betabarra_1^5\}=\{2,5\}$.  We have that
$${\mathcal V}={\mathcal V}_{\mathcal S}\cup {\mathcal V}_{\mathcal T}=\{{\bf v}_2,{\bf v}_4,{\bf v}_{7},{\bf v}_8,{\bf v}_{11},{\bf v}_{13}, {\bf v}_{15},  {\bf v}_{17}\}.$$
Moreover
$$\sigma({\bf v}_2)=-\sum_{i=1}^8\betabarra_0^i=-17,\;\;\;\;\sigma({\bf v}_7)=2\betabarra_0^5+2\betabarra_0^8-\betabarra_0^1-\betabarra_0^2-\betabarra_0^3-\betabarra_0^4-\betabarra_0^6-\betabarra_0^7=-4$$ $$ \mbox{and } \sigma({\bf v}_8)=2\betabarra_0^5+2\betabarra_0^8+\betabarra_1^1-\betabarra_0^2-\betabarra_0^3-\betabarra_0^4-\betabarra_0^6-\betabarra_0^7=14.$$
Therefore, ${\bf v}_7$ is the distinguished vertex ${\bf v}_k$ in Theorem \ref{gordo2}. Then
$${\rm lct}(C)={\overline \alpha}_7={\overline \alpha}_{t_1}=\frac{\betabarra_1^1+
\betabarra_0^1}{\betabarra_1^1\betabarra_0^1+\sum_{s=2}^8
I(f_1,f_s)} = $$
$$=\frac{17+5}{17\cdot 5+17\cdot 3+17\cdot 2+17\cdot 2+ 2\cdot 5\cdot 2+17\cdot 1+17\cdot 1+2\cdot 5\cdot 1}=\frac{11}{134}.$$

}
\end{exa}

\begin{rem}
{\rm
Notice that our  example might induce the reader to think that one has to consider a large number of vertices of $\Gamma (C)$. However, generally speaking, this is not true since we only need those vertices in $\mathcal{V} \subseteq \mathcal{V}_{\mathcal{F}}$,  this last set being (proportionally) very small  when $C$ has branches with many contact maximal values. We have considered a case where rather vertices are relevant to illustrate a wide spectrum of possibilities but avoiding unnecessary information.
}
\end{rem}

As a consequence of Theorem \ref{gordo2} and the forthcoming Lemma \ref{torero}, we state the following result which determines the exact value of the log-canonical threshold of a reduced curve with two branches.

\begin{cor}
\label{corol}
 Assume that the number of components of $C$ is $r=2$
and, without loss of generality, that
$\betabarra_1^1/\betabarra_0^1\leq \betabarra_1^2/\betabarra_0^2$.
Then:
\begin{itemize}
\item[(a)] If $C_1$ and $C_2$ are not freely separated, it holds
that
\begin{equation*}
\mathrm{lct}(C)= \left \{ \begin{array}{ll}
\frac{\betabarra_1^1+\betabarra_0^1}{\betabarra_1^1(\betabarra_0^1
+\betabarra_0^2)}& \text{if} \; \; \betabarra_1^1\geq \betabarra_0^2,\\
\frac{\betabarra_1^2+\betabarra_0^2}{\betabarra_0^2(\betabarra_1^1+\betabarra_1^2)}
& \text{otherwise}.
\end{array}
\right.
\end{equation*}

\vspace{1mm}

\item[(b)] If, on the contrary, $C_1$ and $C_2$ are freely
separated,
\begin{equation*} \mathrm{lct}(C)= \left \{
\begin{array}{ll}
\frac{\betabarra_0^1\betabarra_0^2+I(f_1,f_2)}{(\betabarra_0^1+
\betabarra_0^2)I(f_1,f_2)} & \text{if} \; \; \frac{1}{c}\leq
\frac{\betabarra_0^2}{\betabarra_0^1}\leq c,\\
 \frac{\betabarra_1^1+\betabarra_0^1}{\betabarra_0^1 \betabarra_1^1 + I(f_1,f_2)}&  \text{if} \; \;
\frac{\betabarra_0^2}{\betabarra_0^1}<\frac{1}{c}, \\
\frac{\betabarra_1^2+\betabarra_0^2}{\betabarra_0^2 \betabarra_1^2 + I(f_1,f_2)} & \text{otherwise}, \\
\end{array}
\right.
\end{equation*}
$c$ being the integer such that $(f_1\mid f_2)=(0,c)$.
\end{itemize}

\end{cor}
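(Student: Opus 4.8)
The plan is to derive Corollary \ref{corol} directly from Theorem \ref{gordo2} by specializing to the case $r=2$ and carefully analyzing which vertex $\mathbf{v}_k\in\mathcal{V}$ satisfies conditions (\textit{a}) and (\textit{b}). First I would dispose of the non-singular cases: if one or both branches are non-singular, the extra clause in the definition of $\betabarra_1^h$ applies, but the arguments below go through verbatim since all the relevant quantities are still defined. The essential dichotomy is whether $C_1$ and $C_2$ are freely separated (case (b)) or not (case (a)). In either situation the dual graph $\Gamma(C)$ restricted to $\mathcal{F}$ has a very simple shape: an initial chain of free vertices $\mathbf{v}_1\lesssim\cdots$ followed by the part resolving the two branches, so $\mathcal{V}=\mathcal{V}_{\mathcal{T}}\cup\mathcal{V}_{\mathcal{S}}$ contains at most the terminal satellite vertices $\mathbf{v}_{t_1},\mathbf{v}_{t_2}$ of the two branches and, in case (b), the unique initial separating vertex where they split off.

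Next I would compute $\sigma$ on the (at most three) vertices of $\mathcal{V}$ explicitly. In case (a), the branches are separated at a satellite point, so $\mathcal{V}_{\mathcal{S}}=\emptyset$ and $\mathcal{V}=\{\mathbf{v}_{t_1},\mathbf{v}_{t_2}\}$ with, say, $\mathbf{v}_{t_1}<\mathbf{v}_{t_2}$ in the order $\leq$ (this ordering is forced by the assumption $\betabarra_1^1/\betabarra_0^1\leq\betabarra_1^2/\betabarra_0^2$, which I would justify using Lemma \ref{torero}). Evaluating the formula (\ref{sigma}): at $\mathbf{v}_{t_1}$, since $\text{ter}([\mathbf{v}_1,\mathbf{a}_i]\cap[\mathbf{v}_1,\mathbf{v}_{t_1}])$ is a satellite point (not in $\mathcal{S}$) for the branch whose arrow lies ``below'' and a free point for the other, one gets $\sigma(\mathbf{v}_{t_1})=\betabarra_1^1-\betabarra_0^1\betabarra_0^2/\betabarra_0^1\cdot(\text{something})$; the sign of this quantity is governed precisely by the comparison $\betabarra_1^1\gtrless\betabarra_0^2$. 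A similar computation at $\mathbf{v}_{t_2}$ always gives a non-negative value once the first is negative. So condition (\textit{a})--(\textit{b}) pins down $\mathbf{v}_k=\mathbf{v}_{t_1}$ when $\betabarra_1^1\geq\betabarra_0^2$ and $\mathbf{v}_k=\mathbf{v}_{t_2}$ otherwise; plugging into part (2) of Theorem \ref{gordo2} with $\delta_{is}$ computed from $I(f_1,f_2)=\betabarra_0^1\betabarra_0^2$ (valid in the non-freely-separated case, again by Lemma \ref{torero}) yields the two displayed formulas of (a).

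For case (b), $\mathcal{V}$ contains the separating vertex $\mathbf{v}_{j_0}\in\mathcal{V}_{\mathcal{S}}$ (where $\mathbf{a}_1$ and $\mathbf{a}_2$ first diverge) together with $\mathbf{v}_{t_1}$ and $\mathbf{v}_{t_2}$, with $\mathbf{v}_{j_0}<\mathbf{v}_{t_1}$ and $\mathbf{v}_{j_0}<\mathbf{v}_{t_2}$ but $\mathbf{v}_{t_1}$ and $\mathbf{v}_{t_2}$ incomparable. Here $c_{j_0,i}=\text{card}(\cdots\cap\mathcal{V}_{\mathrm{free}})=c$ for both $i=1,2$ because both arrows are ``below'' $\mathbf{v}_{j_0}$ and the terminal vertex of the intersected path is the separating point itself, which lies in $\mathcal{S}$. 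This gives $\sigma(\mathbf{v}_{j_0})=c(\betabarra_0^1+\betabarra_0^2)-$ (nothing, since $\mathbf{v}_{j_0}^{\geq}=\emptyset$) $>0$? — no: one must re-read (\ref{order+}), and in fact $\mathbf{v}_{j_0}^{<}$ or $\mathbf{v}_{j_0}^{\geq}$ splits the two arrows, so I would recompute to find $\sigma(\mathbf{v}_{j_0})=c\betabarra_0^{i}-\betabarra_0^{i'}$ type expressions whose sign is controlled by $1/c\leq\betabarra_0^2/\betabarra_0^1\leq c$. Then: if $\sigma(\mathbf{v}_{j_0})\geq0$ the search terminates before $\mathbf{v}_{j_0}$, forcing $\mathbf{v}_k$ to be some earlier vertex, but since no earlier vertex lies in $\mathcal{V}$ one concludes (after a short argument about the structure of the initial chain) that $\mathbf{v}_k=\mathbf{v}_{j_0}$ and the $\mathcal{V}_{\mathcal{S}}$-formula of Theorem \ref{gordo2}(2) gives $\mathrm{lct}(C)=(\betabarra_0^1\betabarra_0^2+I(f_1,f_2))/((\betabarra_0^1+\betabarra_0^2)I(f_1,f_2))$; if instead $\sigma(\mathbf{v}_{j_0})<0$ one continues along whichever branch keeps $\sigma$ negative at its terminal satellite vertex, landing on $\mathbf{v}_{t_1}$ or $\mathbf{v}_{t_2}$ according as $\betabarra_0^2/\betabarra_0^1<1/c$ or $>c$, and the $\mathcal{V}_{\mathcal{T}}$-formula (now with $\delta_{12}=I(f_1,f_2)$ since freely separated branches have $\betabarra_0^1\betabarra_1^2\neq\betabarra_1^1\betabarra_0^2$ or the intersection number is smaller) yields the remaining two formulas of (b).

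The main obstacle I expect is the bookkeeping in evaluating $\sigma$ correctly at each vertex: getting $\mathbf{v}_j^{<}$ versus $\mathbf{v}_j^{\geq}$ right, correctly identifying when $\text{ter}([\mathbf{v}_1,\mathbf{a}_i]\cap[\mathbf{v}_1,\mathbf{v}_j])\in\mathcal{S}$ (using Remark \ref{nota2}), and translating the resulting integer inequalities into the clean threshold conditions $\betabarra_1^1\gtrless\betabarra_0^2$ and $1/c\leq\betabarra_0^2/\betabarra_0^1\leq c$. A secondary subtlety is showing that the hypothesis $\betabarra_1^1/\betabarra_0^1\leq\betabarra_1^2/\betabarra_0^2$ really forces $\mathbf{v}_{t_1}$ to precede $\mathbf{v}_{t_2}$ (or handle the symmetric labeling otherwise) and that in the non-freely-separated two-branch case one has $I(f_1,f_2)=\betabarra_0^1\betabarra_0^2\cdot(\text{common-part contribution})$ exactly — both of which should follow from Lemma \ref{torero} and the standard formula for intersection multiplicity in terms of the constellation. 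Once these are in place, parts (a) and (b) are just substitution into Theorem \ref{gordo2}(2) and simplification.
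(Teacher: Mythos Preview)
Your approach matches the paper's: the corollary is stated there simply ``as a consequence of Theorem \ref{gordo2} and the forthcoming Lemma \ref{torero}'', with no further argument, so specializing Theorem \ref{gordo2} to $r=2$ and computing $\sigma$ on $\mathcal{V}$ is exactly what is intended.

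That said, several of your $\sigma$-computations are off, and you should fix them before the argument will actually close. In case~(a), since $\mathbf{v}_{t_1}<\mathbf{v}_{t_2}$ both arrows lie in $\mathbf{v}_{t_1}^{\geq}$, so $\sigma(\mathbf{v}_{t_1})=-\betabarra_0^1-\betabarra_0^2$ is \emph{always} negative; the decisive sign is that of $\sigma(\mathbf{v}_{t_2})=c_{t_2,1}\betabarra_0^1-\betabarra_0^2=\betabarra_1^1-\betabarra_0^2$ (here $c_{t_2,1}=\betabarra_1^1/\betabarra_0^1$ because $\mathcal{S}=\emptyset$). This yields $\mathbf{v}_k=\mathbf{v}_{t_1}$ when $\betabarra_1^1\geq\betabarra_0^2$ and $\mathbf{v}_k=\mathbf{v}_{t_2}$ otherwise, as you conclude --- but not for the reason you give. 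Also, $I(f_1,f_2)$ is \emph{not} $\betabarra_0^1\betabarra_0^2$ here: by Lemma \ref{torero}(d) (or (a),(b) when $q\geq 1$) one has $I(f_1,f_2)=\betabarra_1^1\betabarra_0^2$, and with this value $\delta_{12}=\betabarra_1^1\betabarra_0^2$ in either subcase, giving the displayed formulae after substitution.

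In case~(b) the confusion is similar: $\sigma(\mathbf{v}_{j_0})=-\betabarra_0^1-\betabarra_0^2<0$ always (both arrows are $\geq \mathbf{v}_{j_0}$, so $\mathbf{v}_{j_0}^{<}=\emptyset$), and the branching condition is governed by $\sigma(\mathbf{v}_{t_1})=c\,\betabarra_0^2-\betabarra_0^1$ and $\sigma(\mathbf{v}_{t_2})=c\,\betabarra_0^1-\betabarra_0^2$ (here $c_{t_1,2}=c_{t_2,1}=c$ because the terminal vertex of the intersected paths is $\mathbf{v}_{j_0}\in\mathcal{S}$). Both are $\geq 0$ precisely when $1/c\leq\betabarra_0^2/\betabarra_0^1\leq c$, giving $\mathbf{v}_k=\mathbf{v}_{j_0}$; otherwise exactly one is negative and picks out $\mathbf{v}_{t_1}$ or $\mathbf{v}_{t_2}$. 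For the $\mathcal{V}_{\mathcal{T}}$-formula, $\delta_{12}=I(f_1,f_2)$ follows because Lemma \ref{torero}(a) says the alternative in the definition of $\delta_{is}$ requires $q\geq 1$, which fails here. With these corrections your plan goes through verbatim.
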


We finish this section with two remarks concerning the log-canonical threshold of a non-reduced plane curve and of a complete ideal of our ring $R$.

\begin{rem} \label{rem5}
{\rm Assume that $C=\sum_{i=1}^r n_i C_i$ is a non-reduced curve, $C_1, C_2, \ldots, C_r$ being its integral components. Let ${\mathcal C}$ be the configuration of infinitely near points associated with a log-resolution of $C$. For each $i\in \{1, 2, \ldots,r\}$,  pick $n_i$ curves   $C_{i1}, C_{i2}, \ldots,C_{in_i}$ defined by  $n_i$ curvettes at the maximal point $P_{j-1}$ of $\mathcal C$ through which the strict transform of $C_i$ passes and such that their strict transforms meet $E_j$ at different free points. Then, it follows from the definition of log-canonical threshold that $${\rm lct}(C)=\min
\{1/n_1,\ 1/n_2, \ldots, 1/n_r,{\rm lct}(C')\},$$ where $C'$ is the reduced curve
$\sum_{1\leq i\leq r}\sum_{1\leq j\leq n_i} C_{ij}$. Therefore Theorem \ref{gordo2} provides, in fact, a formula for the log-canonical threshold of any (reduced or non-reduced) singular plane curve.
}
\end{rem}

\begin{rem} \label{rem6}
{\rm Theorem \ref{gordo2} also provides, as a byproduct, a formula for the log-canonical threshold of a  complete ideal of finite co-length in $R$. Indeed, if $\mathfrak{a}$ is such an ideal, it has a unique factorization $\mathfrak{a}=\mathfrak{p}_1^{n_1}\cdots \mathfrak{p}_r^{n_r}$ as a product of simple complete ideals  \cite[page 385]{z-s}. Then, it is straightforward from the definition that ${\rm lct}(\mathfrak{a})={\rm lct}(\sum_{i=1}^r D_i)$ where, for each $i=1,\ldots,r$, $D_i$ is a sum of $n_i$ suitable chosen \emph{general} curves of the ideal $\mathfrak{p}_i$. Recall that a \emph{general} curve of a simple complete ideal $\mathfrak{p}_i$ is an irreducible curve whose strict transform, on the surface  given by the point blowing-up sequence providing the exceptional divisor $E_i$ that defines the ideal, meets $E_i$ transversally at a nonsingular point. Also, notice that in the previous sentence ``suitable chosen" means that the curves meet $E_i$ at different points.
}
\end{rem}

\section{Proofs}
\label{proofs}

This section is devoted to prove the results that we have stated in the
previous one. To this purpose, in each subsection, we will
introduce some notation and prove some properties which
will be necessary to deduce our results. Notation and lemmas in
Subsection \ref{31} are also useful for Subsection \ref{32}.

\subsection{Proposition \ref{kuwatilla}: auxiliary results and proof}
\label{31}

We start this section with a lemma which is deduced from \cite[Section 3]{Delgado} and
will be a key tool for our proofs.

\begin{lem}\label{torero}
Let $h_1$ and $h_2$ be two irreducible elements of $R$ such that
$\pi$ is a common log resolution of the curves $H_1$ and $H_2$ ($H_1
\neq H_2$) that they define. Set $I(h_1,h_2)$ (respectively, $(h_1 \mid
h_2)=(q,c)$) the  intersection multiplicity (respectively, contact pair) of $h_1$ and $h_2$. Then:
\begin{itemize}
\item[(a)] $q\geq 1$ if, and only if,
$\betabarra_0^{h_1}\betabarra_1^{h_2}=\betabarra_1^{h_1}\betabarra_0^{h_2}\leq
I(h_1, h_2).$
\item[(b)] If $q=1$ and $c=0$,  then $I(h_1,h_2)=\betabarra_0^{h_1}\betabarra_1^{h_2}=\betabarra_1^{h_1}\betabarra_0^{h_2}$.
\item[(c)] If $q=0$ and $c\leq
\min\{l_0^{h_1},l_0^{h_2}\}$,  then $I(h_1,
h_2)=c\betabarra_0^{h_1}\betabarra_0^{h_2}$. \item[(d)] If $q=0$ and
$c=\min\{l_0^{h_1}+1,l_0^{h_2}+1\}$,  then $I(h_1, h_2)=\min
\{\betabarra_0^{h_1}\betabarra_1^{h_2},
\betabarra_1^{h_1}\betabarra_0^{h_2}\}$.

\end{itemize}

\end{lem}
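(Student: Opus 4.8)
The plan is to derive all four statements from the structure of the minimal log resolution encoded in the proximity/dual graph, using the standard fact that intersection multiplicities of branches can be read off from the shared part of their resolutions via the proximity relations. Concretely, for two branches $H_1,H_2$ one has the Noether-type formula $I(h_1,h_2)=\sum_{P_j} \mathbf{m}_j(H_1)\,\mathbf{m}_j(H_2)$, the sum running over the points of $\mathcal C$ through which both strict transforms pass, where $\mathbf{m}_j$ denotes the multiplicity of the strict transform at $P_j$. Equivalently, $I(h_1,h_2)$ equals the ``coincidence value'' of the two semigroups: if $P_j$ is the last common point, then $I(h_1,h_2)$ is the value $\mathbf{v}^{H_1}(\varphi)$ of a curvette $\varphi$ at $P_j$ computed in the semigroup of $H_1$ (this is exactly the content of the results of Delgado in \cite[Section 3]{Delgado} that the lemma cites). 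I would set up this dictionary first, recalling that the maximal contact values $\betabarra_l^{h}$ are the minimal generators of the value semigroup and that the ``approximate roots''/curvettes at the $l$th terminal satellite point have value controlled by $\betabarra_l^{h}$.

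For part \textit{(a)}: if $q\geq 1$ the two branches share their first terminal satellite point $P_{t}$ and hence the whole portion of the resolution up to $P_t$, which is the portion determined by $\betabarra_0,\betabarra_1$; this common portion forces $\betabarra_0^{h_1}/\betabarra_1^{h_1}=\betabarra_0^{h_2}/\betabarra_1^{h_2}$ (the first characteristic ratio agrees), i.e. $\betabarra_0^{h_1}\betabarra_1^{h_2}=\betabarra_1^{h_1}\betabarra_0^{h_2}$, and moreover $I(h_1,h_2)\geq \mathbf{v}^{H_1}(\text{curvette at }P_t)=\betabarra_1^{h_1}\cdot(\text{something})\geq \betabarra_0^{h_1}\betabarra_1^{h_2}$ after normalizing; the converse is the observation that the displayed numerical equality can only hold when the resolutions share the first characteristic data, which is exactly $q\geq 1$. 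Part \textit{(b)}: when $q=1,c=0$ the last common point \emph{is} the terminal satellite point $P_t$, so $I(h_1,h_2)$ is precisely the value of a curvette at $P_t$, and a direct computation in the semigroup gives $I(h_1,h_2)=\betabarra_0^{h_1}\betabarra_1^{h_2}=\betabarra_1^{h_1}\betabarra_0^{h_2}$ (no further free points contribute). Part \textit{(c)}: when $q=0$ the branches split at a \emph{free} point; if they share exactly $c$ free points with $c\leq\min\{l_0^{h_1},l_0^{h_2}\}$, all $c$ shared points lie in the initial free chain where both strict transforms have multiplicity $\betabarra_0^{h_1}$ resp. $\betabarra_0^{h_2}$, so Noether's formula gives directly $I(h_1,h_2)=c\,\betabarra_0^{h_1}\betabarra_0^{h_2}$. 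Part \textit{(d)}: when $c=\min\{l_0^{h_1}+1,l_0^{h_2}+1\}$ the branches part exactly at the last free point of the initial chain of one of them (say $H_1$, so $l_0^{h_1}\leq l_0^{h_2}$); then $I(h_1,h_2)$ is the value at that point of a curvette, which for $H_1$ is $\betabarra_1^{h_1}$ while in the semigroup of $H_2$ it is $(l_0^{h_1}+1)\betabarra_0^{h_2}\cdot(\dots)$; reconciling the two and using $\betabarra_1^{h_1}\lessgtr \betabarra_0^{h_1}\betabarra_1^{h_2}/\betabarra_0^{h_2}$ gives the minimum expression $I(h_1,h_2)=\min\{\betabarra_0^{h_1}\betabarra_1^{h_2},\betabarra_1^{h_1}\betabarra_0^{h_2}\}$.

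I expect the main obstacle to be bookkeeping the exact multiplicities of the strict transforms along the shared free chains and across a terminal satellite point, and in particular pinning down why the \emph{value of a curvette at the branching point} equals the claimed closed-form expression in each semigroup — this is where the precise relation between $l_0^h$, $\betabarra_0^h$, $\betabarra_1^h$ and the structure of the first two characteristic exponents must be used carefully. The cleanest route is to avoid re-deriving this from scratch and instead quote the explicit formulas of \cite[Section 3]{Delgado} for the intersection multiplicity in terms of the contact pair $(q,c)$ and the maximal contact values, which already package exactly the case distinctions $q\geq 1$, $q=1\wedge c=0$, $q=0\wedge c\leq\min\{l_0^{h_1},l_0^{h_2}\}$, and $q=0\wedge c=\min\{l_0^{h_1}+1,l_0^{h_2}+1\}$; then each of \textit{(a)}--\textit{(d)} becomes a short translation plus, for \textit{(a)}, the elementary observation that the numerical identity $\betabarra_0^{h_1}\betabarra_1^{h_2}=\betabarra_1^{h_1}\betabarra_0^{h_2}$ characterizes agreement of the first characteristic ratio and hence $q\geq 1$. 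A secondary subtlety is the nonsingular case, where $\betabarra_0^h=1$ and $\betabarra_1^h$ is defined as the number of points of $\mathcal C$ on the strict transform of $H$; I would check separately that the formulas in \textit{(c)} and \textit{(d)} reduce correctly there (e.g. \textit{(c)} becomes $I=c$, which is the number of common infinitely near points, as it must be).
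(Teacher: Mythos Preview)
Your proposal is correct and aligns exactly with the paper's own treatment: the paper does not prove this lemma at all, it simply states that the result ``is deduced from \cite[Section 3]{Delgado}'', which is precisely the route you recommend at the end of your sketch. Your additional outline of a direct argument via Noether's formula and the structure of the initial free chain is sound in spirit (and parts (c) and (d) go through exactly as you describe), though some of the placeholder expressions in part (a) would need to be made precise; since both you and the paper defer to Delgado's explicit formulas, there is no real divergence to compare.
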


With the previous notation, for every curvette $\varphi$ at a point of ${\mathcal F}$, we consider the following subsets of $\mathbb{J}_r := \{1, 2,\ldots,r\}$:
\begin{itemize}
\item $J_1(\varphi) := \left\{ s \in \mathbb{J}_r \;|\; (\varphi \mid
f_s)=(0,c) \mbox{ with } c=\min \{l^{\varphi}_0+1,l^s_0+1\} \mbox{ and }
\frac{\betabarra_1^s}{\betabarra_0^s}<
\frac{\betabarra_1^{\varphi}}{\betabarra_0^{\varphi}} \right\}$,
 \item $J_2(\varphi) :=
\left\{ s\in \mathbb{J}_r \;|\; ({\varphi}\mid f_s)=(0,c) \mbox{ with
} c=\min \{l^{\varphi}_0+1,l^s_0+1\} \mbox{ and }
\frac{\betabarra_1^{s}}{\betabarra_0^{s}}>
\frac{\betabarra_1^{\varphi}}{\betabarra_0^{\varphi}} \right\}$,
  \item $J_3(\varphi) := \left\{ s\in  \mathbb{J}_r \;\mid \;(\varphi\mid f_s)=(1,c)
  \mbox{ for some } c\geq 0  \right\}$,
    \item $J_{4}(\varphi)  := \left\{ s \in \mathbb{J}_r \;|\;
  ({\varphi}\mid f_s)=(0,c)
   \mbox{ with } c\leq\min\{l^{\varphi}_0, l^s_0\}\right\}$.
 \end{itemize}
Notice that the non-empty elements of $\{J_1(\varphi),J_2(\varphi),J_3(\varphi),J_{4}(\varphi)\}$ define a partition of $\mathbb{J}_r$. If $\varphi$ defines a nonsingular curve then $J_4(\varphi)=J_{4,1}(\varphi)\cup J_{4,2}(\varphi)$, where
\begin{itemize}
\item $J_{4,1}(\varphi)  :=  \left\{ s \in \mathbb{J}_r \;|\;
({\varphi}\mid f_s)=(0,c)
 \mbox{ with } c <l^{\varphi}_0 \mbox{ and } c\leq l^s_0\right\}$ and
\item $J_{4,2}(\varphi)  :=  \left\{ s \in \mathbb{J}_r \;|\;
({\varphi}\mid f_s)=(0,c)
 \mbox{ with } c =l^{\varphi}_0 \leq l^s_0\right\}$.

\end{itemize}

The following lemmas provide some properties that use and study the sets $J_{l}(\varphi)$. They will be needed for the development of our paper.

\begin{lem}\label{anthrax}
Let $\varphi$ be a curvette at a free point $P_j\in {\mathcal F}$. Then $\varphi$ defines a nonsingular curve and $J_2(\varphi)=J_{3}(\varphi)=\emptyset$.
\end{lem}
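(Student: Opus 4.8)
\textbf{Proof proposal for Lemma \ref{anthrax}.}

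The plan is to argue directly from the geometry of the blowing-up sequence and the definitions of the sets $J_l(\varphi)$. First I would show that a curvette $\varphi$ at a free point $P_j$ defines a nonsingular curve. Since $P_j$ is free, it is proximate to exactly one point of $\mathcal{C}$, and the strict transform of $\varphi$ passes through the free points $P_1 \lesssim P_2 \lesssim \cdots \lesssim P_j$ along the chain leading to $P_j$ and then leaves $E_j$ transversally at a regular point by definition of a curvette. Hence $\varphi$ has no satellite points in its own (minimal) resolution, which is equivalent to saying that $\varphi$ has no terminal satellite points; by the remark following the definition of terminal satellite point (the number of such points equals the index $g$ of the last maximal contact value), this forces $g = 0$, i.e. $\varphi$ is nonsingular. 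In particular $\bar\beta_0^\varphi = 1$ and $\bar\beta_1^\varphi = l_0^\varphi$ equals the number of free points through which the strict transform of $\varphi$ passes, namely the length of the chain $P_1 \lesssim \cdots \lesssim P_j$.

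Next I would dispose of $J_3(\varphi) = \emptyset$. An index $s$ lies in $J_3(\varphi)$ iff $(\varphi \mid f_s) = (1,c)$ for some $c \geq 0$, i.e. $\varphi$ and $f_s$ share at least one common terminal satellite point. But we have just shown $\varphi$ has \emph{no} terminal satellite points at all, so they can share none; hence $q = 0$ in every contact pair $(\varphi \mid f_s)$, and $J_3(\varphi)$ is empty. This is the easy half and should be essentially immediate once the first paragraph is in place.

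The remaining and slightly more delicate point is $J_2(\varphi) = \emptyset$. Suppose $s \in J_2(\varphi)$; then $(\varphi \mid f_s) = (0,c)$ with $c = \min\{l_0^\varphi + 1, l_0^s + 1\}$ and $\bar\beta_1^s/\bar\beta_0^s > \bar\beta_1^\varphi/\bar\beta_0^\varphi = \bar\beta_1^\varphi = l_0^\varphi$ (using $\bar\beta_0^\varphi = 1$). Now $c$ counts the free points through which both strict transforms pass (up to and including the last common terminal satellite point, of which there is none, so simply the common initial free chain), and by construction the strict transform of $\varphi$ only passes through the free chain $P_1 \lesssim \cdots \lesssim P_j$ of length $l_0^\varphi + 1$ (here one must be careful: for a nonsingular curve the convention in the paper sets $l_0^\varphi$ to be the cardinality of the free points through which $\varphi$ passes, and $\bar\beta_1^\varphi$ is that same number, so the ``maximal chain'' has length $l_0^\varphi$ and the contact with any branch meeting $\varphi$ along the whole chain is $c = l_0^\varphi$, not $l_0^\varphi + 1$). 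Therefore $c \leq l_0^\varphi$, which contradicts $c = \min\{l_0^\varphi + 1, l_0^s + 1\} \geq l_0^\varphi + 1$ — unless the minimum is not attained at $\varphi$, but $\min\{l_0^\varphi+1, l_0^s+1\} \geq l_0^\varphi + 1$ regardless. So no such $s$ exists and $J_2(\varphi) = \emptyset$.

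The main obstacle I anticipate is keeping the bookkeeping of the ``$+1$'' conventions straight: the paper defines $l_0^h$ differently for singular and nonsingular $h$, and the quantity $c$ in a contact pair is compared against $\min\{l_0^{h_1}, l_0^{h_2}\}$ versus $\min\{l_0^{h_1}+1, l_0^{h_2}+1\}$ in the different $J_l$'s. The cleanest route is probably to observe that since $\varphi$ is a curvette at $P_j$, its strict transform meets $E_j$ and no later divisor, so it is geometrically impossible for any other branch to share with $\varphi$ a free point beyond $P_j$; this caps $c$ at the length of $\varphi$'s free chain and makes $J_2(\varphi)$ (which requires $c$ one larger than a quantity bounded below by that length, after accounting for the convention) vacuous. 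I would also invoke Lemma \ref{torero}(a): $q \geq 1$ is equivalent to $\bar\beta_0^{h_1}\bar\beta_1^{h_2} = \bar\beta_1^{h_1}\bar\beta_0^{h_2} \leq I(h_1,h_2)$, which gives an alternative, purely numerical confirmation that $q = 0$ for $\varphi$ against every $f_s$, reinforcing $J_3(\varphi) = \emptyset$ independently of the geometric argument.
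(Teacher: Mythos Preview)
Your treatment of the nonsingularity of $\varphi$ and of $J_3(\varphi)=\emptyset$ is fine and matches the paper's argument.

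The argument for $J_2(\varphi)=\emptyset$, however, contains a genuine error. You correctly observe that, since a nonsingular curvette $\varphi$ passes through exactly $l_0^\varphi$ points of $\mathcal C$ (all free), one always has $c\le l_0^\varphi$. But you then claim that this contradicts $c=\min\{l_0^\varphi+1,\,l_0^s+1\}$ because ``$\min\{l_0^\varphi+1,\,l_0^s+1\}\ge l_0^\varphi+1$ regardless''. That inequality is simply false: the minimum of two numbers is not bounded below by either of them. If $l_0^s+1\le l_0^\varphi$, then $c=l_0^s+1\le l_0^\varphi$ is perfectly compatible with your bound and no contradiction arises. So your argument, as written, does not exclude this case.

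What actually happens in this remaining case is that $C_s$ must be singular (otherwise $c\le l_0^s$ as well), and $\varphi$ shares with $C_s$ the whole initial free chain $P_1,\ldots,P_{l_0^s+1}$ of $C_s$. The paper closes the gap with Noether's formula: since $\varphi$ is smooth, $I(\varphi,f_s)=\sum_{k=1}^{c} m_{P_k}(C_s)\le c\,\bar\beta_0^s$, while on the other hand this intersection number equals $\bar\beta_1^s$ (a smooth curve following $C_s$ along its entire initial free chain realises the second maximal contact value). Combined with $\bar\beta_1^s/\bar\beta_0^s>\bar\beta_1^\varphi/\bar\beta_0^\varphi=l_0^\varphi\ge c$, one gets $\bar\beta_1^s>c\,\bar\beta_0^s\ge I(\varphi,f_s)=\bar\beta_1^s$, the desired contradiction. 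This extra numerical input (Noether's formula, or equivalently Lemma~\ref{torero}(d)) is precisely what your purely combinatorial bound on $c$ is missing.
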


\begin{proof}

Since $P_j$ is a free point that belongs to $\mathcal F$ and $\varphi$ is transversal to $E_j$, $\varphi$ defines a nonsingular curve. Then, the equality $J_{3}(\varphi)=\emptyset$ is clear because there is no terminal satellite point for $\varphi$.

Reasoning by contradiction, assume that there exists $s\in \{1,\ldots,r\}$ such that $s\in J_{2}(\varphi)$. Then $C_s$ is a singular curve and, taking into account that $\varphi$ is nonsingular, we have that $(\varphi\mid f_s)=(0,c)$ with $c=l_0^s+1$ and $\betabarra_1^s/\betabarra_0^s>c$. That is, $c$ is the number of free points in ${\mathcal C}_s\cap {\mathcal F}$ and $\betabarra_1^s >c\betabarra_0^s$. But this contradicts Noether's formula because $\betabarra_0^s$ is the multiplicity of $C_s$, $\betabarra_1^s=I(\varphi,f_s)$ and $C_s$ is singular.

\end{proof}

\begin{lem}\label{ttt}

With the above notations, let $P_j\in {\mathcal F}$ and let $\varphi$ be a curvette at $P_j$.
\begin{itemize}
\item[(a)] If $P_j=P_{t_i}\in {\mathcal T}$, then
$${\bf
v}_{t_i}^{<}=\{{\bf a}_s\mid s\in J_1(\varphi)\cup J_4(\varphi)\}\;\;\mbox{ and }\;\;{\bf
v}_{t_i}^{\geq}=\{{\bf a}_s\mid s\in J_2(\varphi)\cup J_3(\varphi)\}.$$
\item[(b)] If $P_j$ is free, then $${\bf
v}_{j}^{<}=\{{\bf a}_s\mid s\in J_1(\varphi)\cup J_{4,1}(\varphi)\}\;\;\mbox{ and }\;\;{\bf
v}_{j}^{\geq}=\{{\bf a}_s\mid s\in J_{4,2}(\varphi)\}.$$
\end{itemize}

\end{lem}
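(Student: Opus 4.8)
The plan is to prove Lemma \ref{ttt} by unwinding the definitions of the sets $\mathbf{v}_j^{<}$, $\mathbf{v}_j^{\geq}$ and of the partition $\{J_1(\varphi),J_2(\varphi),J_3(\varphi),J_4(\varphi)\}$, and then to match the two classifications arrow-by-arrow using Lemma \ref{torero} together with the geometric description of contact pairs. Fix a branch $C_s$ of $C$ with arrow $\mathbf{a}_s$; by (\ref{order+}), $\mathbf{a}_s\in \mathbf{v}_j^{<}$ means $\mathbf{v}_j\nleq \mathbf{a}_s$, i.e. $\mathbf{v}_j$ does \emph{not} lie on the geodesic $[\mathbf{v}_1,\mathbf{a}_s]$, which (translating back to the constellation) says that the strict transform of $C_s$ does not pass through $P_j$, equivalently $P_j\notin \mathcal{C}_s$. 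Dually $\mathbf{a}_s\in \mathbf{v}_j^{\geq}$ iff $P_j\in \mathcal{C}_s$. So the content of both parts of the lemma is: $P_j\in\mathcal{C}_s$ exactly when $s$ belongs to the ``$\geq$'' list of sets, and $P_j\notin\mathcal{C}_s$ exactly when $s$ belongs to the ``$<$'' list. Since $\varphi$ is a curvette at $P_j$, its strict transform meets $E_j$ transversally at a regular point, so $\mathcal{C}_\varphi=\{P_1,\dots,P_{j-1},P_j\}$ is precisely the chain of points through which the strict transform of $\varphi$ passes, and $\mathcal{C}_\varphi\cap\mathcal{C}_s$ is an initial segment of $\mathcal{C}_\varphi$ whose maximum is $\max_{\gtrsim}(\mathcal{C}_\varphi\cap\mathcal{C}_s)$. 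Thus the contact pair $(\varphi\mid f_s)$ directly encodes how the two branches travel together up to $P_j$, and the point $P_j$ lies on $\mathcal{C}_s$ iff the two branches are still ``together'' at stage $j$.

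For part (a), assume $P_j=P_{t_i}\in\mathcal{T}$, so $P_j$ is a satellite point (indeed a terminal satellite point for $\varphi$, since $\varphi$ is a curvette there and $P_{t_i}$ is terminal satellite for $C_i$, hence for any curvette at it). First I would dispatch $J_3(\varphi)$: if $s\in J_3(\varphi)$ then $(\varphi\mid f_s)=(1,c)$, meaning $\varphi$ and $C_s$ share their (unique, for $\varphi$) terminal satellite point, which must be $P_j=P_{t_i}$; but a point of $\mathcal{C}$ through which the strict transform of $C_s$ passes and which is satellite for $C_s$ forces $P_j\in\mathcal{C}_s$, so $\mathbf{a}_s\in\mathbf{v}_{t_i}^{\geq}$. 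Next $J_2(\varphi)$: if $s\in J_2(\varphi)$ then $q=0$ and $c=\min\{l_0^\varphi+1,l_0^s+1\}$ with $\betabarra_1^s/\betabarra_0^s>\betabarra_1^\varphi/\betabarra_0^\varphi$; since $\varphi$ is nonsingular $l_0^\varphi+1$ equals the number of free points of $\mathcal{C}_\varphi$, which is at most $j$ but —because $P_j$ is satellite for $\varphi$— is the full count of free points on $\varphi$'s chain up to and including $P_j$; I would argue that the inequality on the maximal contact values forces the two branches to still coincide at $P_j$, hence $P_j\in\mathcal{C}_s$ and $\mathbf{a}_s\in\mathbf{v}_{t_i}^{\geq}$. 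Conversely, if $s\in J_1(\varphi)\cup J_4(\varphi)$ then either $q=0$ with $c\le\min\{l_0^\varphi,l_0^s\}$ (the branches separate at a free point strictly before $P_j$), or $q=0$, $c=\min\{l_0^\varphi+1,l_0^s+1\}$ with $\betabarra_1^s/\betabarra_0^s<\betabarra_1^\varphi/\betabarra_0^\varphi$ (they separate at the last free point before the terminal satellite block, again before $P_j$); in both cases $\max_{\gtrsim}(\mathcal{C}_\varphi\cap\mathcal{C}_s)\prec P_j$, so $P_j\notin\mathcal{C}_s$ and $\mathbf{a}_s\in\mathbf{v}_{t_i}^{<}$. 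Since $\{J_1,J_2,J_3,J_4\}$ partitions $\mathbb{J}_r$, these inclusions give the stated equalities.

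For part (b), assume $P_j$ is free and in $\mathcal{F}$. By Lemma \ref{anthrax}, $\varphi$ is nonsingular and $J_2(\varphi)=J_3(\varphi)=\emptyset$, so $\mathbb{J}_r=J_1(\varphi)\cup J_{4,1}(\varphi)\cup J_{4,2}(\varphi)$. The distinction between $J_{4,1}$ and $J_{4,2}$ is exactly whether the common free segment $\mathcal{C}_\varphi\cap\mathcal{C}_s$ stops strictly before $P_j$ (case $c<l_0^\varphi$, i.e. $s\in J_{4,1}$, giving $P_j\notin\mathcal{C}_s$) or reaches $P_j$ (case $c=l_0^\varphi$, i.e. $s\in J_{4,2}$, giving $P_j\in\mathcal{C}_s$); here I use that $l_0^\varphi$, for the nonsingular curvette at the free point $P_j$, counts exactly the free points of $\mathcal{C}_\varphi$, i.e.\ all of $P_1,\dots,P_j$ that are free, so $P_j$ being free means it is the $(l_0^\varphi)$-th one and ``$c=l_0^\varphi$'' pins the separation point at $P_j$. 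For $s\in J_1(\varphi)$, the branches separate at the terminal-satellite stage of one of them, which sits above some free point $\preceq P_j$ but not at the free point $P_j$ itself unless $P_j$ were that separation free point — and the condition $c=\min\{l_0^\varphi+1,\dots\}$ with $c=l_0^\varphi+1>l_0^\varphi$ forces the separation to be strictly past where $\varphi$'s free chain ends at $P_j$, so $P_j\notin\mathcal{C}_s$, giving $\mathbf{a}_s\in\mathbf{v}_j^{<}$. Combining, $\mathbf{v}_j^{<}=\{\mathbf{a}_s\mid s\in J_1(\varphi)\cup J_{4,1}(\varphi)\}$ and $\mathbf{v}_j^{\geq}=\{\mathbf{a}_s\mid s\in J_{4,2}(\varphi)\}$.

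The main obstacle I anticipate is making the bookkeeping between the \emph{indices} $l_0^\varphi$, $l_0^s$, the \emph{number $c$ in the contact pair}, and the \emph{position of $\max_{\gtrsim}(\mathcal{C}_\varphi\cap\mathcal{C}_s)$ relative to $P_j$} fully precise — in particular handling the boundary cases where a branch separates exactly at $P_j$ versus one step before, and verifying in part (a) that for a satellite $P_j=P_{t_i}$ the membership of $s$ in $J_2(\varphi)$ (the ``value-comparison'' case) really does put $P_j$ on $\mathcal{C}_s$ rather than off it. These are the spots where Lemma \ref{torero}(a)--(d) and Remark \ref{nota2} (the curvette interpretation of the coefficients $c_{ji}$) must be invoked with care; once the dictionary between ``$P_j\in\mathcal{C}_s$'' and the arithmetic conditions defining the $J_l(\varphi)$ is set up, the proof is a finite case check against the partition.
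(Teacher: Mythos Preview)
Your argument rests on the claim that $\mathbf{a}_s\in\mathbf{v}_j^{\geq}$ (i.e.\ $\mathbf{v}_j$ lies on the geodesic $[\mathbf{v}_1,\mathbf{a}_s]$ in the dual graph) is equivalent to $P_j\in\mathcal{C}_s$ (the strict transform of $C_s$ passes through $P_j$). This equivalence is \emph{false}: the tree order $\leq$ on $\Gamma(C)$ is not the same as the order $\lesssim$ on the constellation, and the path $[\mathbf{v}_1,\mathbf{a}_s]$ is \emph{not} $\{\mathbf{v}_j\mid P_j\in\mathcal{C}_s\}$. In the paper's own Example~\ref{example1}, take $P_j=P_{t_1}=P_7$ and $s=2$: the dual-graph path from $\mathbf{v}_1$ to $\mathbf{a}_2$ (at $\mathbf{v}_8$) is $\mathbf{v}_1,\mathbf{v}_2,\mathbf{v}_3,\mathbf{v}_6,\mathbf{v}_7,\mathbf{v}_5,\mathbf{v}_8$, so $\mathbf{v}_7\leq\mathbf{a}_2$ and $\mathbf{a}_2\in\mathbf{v}_7^{\geq}$; yet $\mathcal{C}_2=\{P_1,P_2,P_3,P_4,P_5,P_8\}$ and $P_7\notin\mathcal{C}_2$. (Indeed $2\in J_2(\varphi_7)$, exactly the case where you ``anticipate'' difficulty and in fact your argument breaks.) A smaller slip in the same paragraph: for $P_j=P_{t_i}\in\mathcal{T}$ the curvette $\varphi$ is \emph{singular}, not nonsingular; it has $P_{t_i}$ as its unique terminal satellite point.

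The dictionary you need between the $J_l(\varphi)$ and the sets $\mathbf{v}_j^{<},\mathbf{v}_j^{\geq}$ cannot be read off from proximity alone; it passes through intersection multiplicities. The paper's proof introduces $H(h_1,h_2)=I(h_1,h_2)/\betabarra_0^{h_2}$ and invokes a result of Delgado (\cite{Felix,Delgado2}) to the effect that, if $\mathbf{v}_k$ denotes the meet of $\mathbf{v}_j$ and $\mathbf{a}_s$ in the dual-graph tree, then $\mathbf{v}_k<\mathbf{v}_j$ is equivalent to $H(\varphi,f_s)<H(\varphi,\varphi')$ for a second curvette $\varphi'$ at $P_j$. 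One then computes $H(\varphi,f_s)$ and $H(\varphi,\varphi')$ via Lemma~\ref{torero} in each case $s\in J_l(\varphi)$ and compares. Without this (or an equivalent) bridge relating the dual-graph order to numerical data of the branches, the case analysis you outline cannot be completed.
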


\begin{proof}

For any couple of irreducible elements $h_1,h_2\in R$ $(H_1 \neq
H_2$), we define
$$H(h_1,h_2):=\frac{I(h_1,h_2)}{\betabarra_0^{h_2}}.$$

In order to prove (a), first consider an
index $s\in J_1(\varphi)$ and let $\varphi'$ be a curvette at $P_j$ different from $\varphi$. Lemma
\ref{torero} implies that
\begin{equation}\label{prrr}
H(\varphi,f_s)=\frac{\betabarra_1^s\betabarra_0^{\varphi}}{\betabarra_0^s}\;\;\;\mbox{
and }\;\;\;H(\varphi,\varphi')= \betabarra_1^{\varphi}.
\end{equation}
Since $\betabarra_1^{s}/\betabarra_0^{s}<\betabarra_1^{\varphi}/\betabarra_0^{\varphi}$,  the following inequality holds:
\begin{equation}\label{fff}
H(\varphi,f_s)<H(\varphi,\varphi').
\end{equation}

By a result  stated in \cite[page 425]{Felix}, and proved in \cite{Delgado2}, the last inequality is equivalent to fact that if ${\bf v}_k$ is the vertex satisfying $[{\bf v}_1,{\bf v}_j]\cap [{\bf v}_1,{\bf a}_s]=[{\bf v}_1,{\bf v}_k]$ then ${\bf v}_k<{\bf v}_j$. Moreover, since $(\varphi\mid f_s)=(0,c)$ with $c=\min\{l_0^{\varphi}+1,l_0^{s}+1\}$, the process of construction of the dual graph $\Gamma(C)$ allows us to deduce that either ${\bf v}_k={\bf v}_j$ or ${\bf v}_k= {\bf a}_s$. So, the unique possibility is ${\bf v}_k= {\bf a}_s$ and, therefore, ${\bf a}_s\in {\bf v}_j^<$. We notice that, although the mentioned result of \cite{Felix, Delgado2} is stated over the complex numbers, its proof depends only on the Hamburger-Noether expansions of the curves, which are independent of the characteristic of the ground field.

Assume now that $s \in J_2(\varphi)$ and consider a curvette $\psi$ at the point $P_r$ such that ${\bf a}_s$ is a label of ${\bf v}_r$. Then, the inequality
\begin{equation}\label{ggg}
H(f_s,\varphi)<H(f_s,\psi)
\end{equation}
holds because $H(f_s,\varphi)=\betabarra_1^{\varphi}\betabarra_0^{s}/\betabarra_0^{\varphi}$,  $H(f_s,\psi)=\betabarra_1^s$ and $\betabarra_1^{\varphi}/\betabarra_0^{\varphi}<\betabarra_1^{s}/\betabarra_0^{s}$. Also, again by \cite[page 425]{Felix}, ${\bf v}_k<{\bf a}_s$, where ${\bf v}_k$ is the above mentioned vertex. Since $(\varphi\mid f_s)=(0,c)$ with $c=\min\{l_0^{\varphi}+1,l_0^{s}+1\}$, ${\bf v}_k$ is either ${\bf v}_j$ or ${\bf a}_s$. So we have that ${\bf v}_k={\bf v}_j$ and, as a consequence, ${\bf a}_s\in {\bf v}_j^{\geq}$.

When $ s \in J_3(\varphi)$,  it is clear that ${\bf v}_{j}\leq {\bf a}_s$ because $\varphi$ and $f_s$ share
its minimum terminal satellite
point (that is $P_j=P_{t_i}$).

Finally, assume  that $s \in J_4(\varphi)$ and suppose that ${\bf a}_s\not\in {\bf v}_j^{<}$. This means that ${\bf v}_j\leq {\bf a}_s$ and, in fact, ${\bf v}_j<{\bf a}_s$ (because $s\not\in J_3(\varphi)$). By \cite[page 425]{Felix} we have that $H(\varphi,\varphi')<H(\varphi,f_s)$, $\varphi'$ being also a curvette at $P_j$ different from $\varphi$. This implies, using Lemma \ref{torero}, that $c>\betabarra_1^{\varphi}/\betabarra_0^{\varphi}$, $c$ being the value such that $(\varphi\mid f_s)=(0,c)$. This is a contradiction and, thus, ${\bf a}_s\in {\bf v}_j^{<}$.

Therefore, the previous paragraphs and the fact that $\{J_1(\varphi), J_2(\varphi), J_3(\varphi), J_4(\varphi)\}$ is a partition of $\mathbb{J}_r$ conclude the proof of (a).

With respect to (b), our reasoning is analogous because the inclusion $J_1(\varphi)\subseteq {\bf v}_j^{<}$ (respectively, $J_{4,1}(\varphi)\subseteq {\bf v}_j^{<}$, $J_{4,2}(\varphi)\subseteq {\bf v}_j^{\geq}$) can be proved in a  similar way to the proof of $J_1(\varphi)\subseteq {\bf v}_j^{<}$ (respectively, $J_{4}(\varphi)\subseteq {\bf v}_j^{<}$, $J_{2}(\varphi)\cup J_{3}(\varphi)\subseteq {\bf v}_j^{\geq}$) of (a).

\end{proof}

Given a curvette $\varphi$ at a point of $\mathcal F$, the sets $J_3(\varphi)$ and $J_4(\varphi)$ (or $J_{4,1}(\varphi)$ and $J_{4,2}(\varphi)$ if $\varphi$ is nonsingular) are easy to compute only from the proximity graph of $C$. The following result, that is a straightforward consequence of the two previous lemmas, will allow us to detect, only by inspection of the proximity and dual graphs of $C$, the elements of the sets $J_k(\varphi)$, $k\in \{1,2\}$.

\begin{lem}
Let $\varphi$ be a curvette at a point $P_j\in {\mathcal F}$ and let $s\in \{1,\ldots,r\}$.
\begin{itemize}
\item[(a)] $s\in J_1(\varphi)$ if and only if $(\varphi \mid
f_s)=(0,c) \mbox{ with } c=\min \{l^{\varphi}_0+1,l^s_0+1\}\;$ and $\;{\bf a}_s\in \bv_j^{<}$.
\item[(b)] $s\in J_2(\varphi)$ if and only if $(\varphi \mid
f_s)=(0,c) \mbox{ with } c=\min \{l^{\varphi}_0+1,l^s_0+1\}\;$ and $\;{\bf a}_s\in \bv_j^{\geq}$.

\end{itemize}

\end{lem}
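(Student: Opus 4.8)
The plan is to deduce this lemma directly from Lemma \ref{ttt} together with the membership conditions defining $J_1(\varphi)$ and $J_2(\varphi)$. The statement asserts a biconditional whose only subtle ingredient is the claim that, under the hypothesis $(\varphi\mid f_s)=(0,c)$ with $c=\min\{l_0^\varphi+1,l_0^s+1\}$, the inequality between slopes $\betabarra_1^s/\betabarra_0^s$ versus $\betabarra_1^\varphi/\betabarra_0^\varphi$ can be read off from whether ${\bf a}_s$ lies in $\bv_j^{<}$ or in $\bv_j^{\geq}$. So the core of the proof is to show that, once we know $(\varphi\mid f_s)=(0,c)$ with $c=\min\{l_0^\varphi+1,l_0^s+1\}$, the three a priori possibilities $s\in J_1(\varphi)$, $s\in J_2(\varphi)$, $s\in J_4(\varphi)$ are mutually exclusive and that the first two are precisely distinguished by the side of the order in which ${\bf a}_s$ sits.

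First I would prove (a). Suppose $s\in J_1(\varphi)$. By definition $(\varphi\mid f_s)=(0,c)$ with $c=\min\{l_0^\varphi+1,l_0^s+1\}$, so the first half of the right-hand condition holds; and by Lemma \ref{ttt} (part (a) if $P_j=P_{t_i}\in{\mathcal T}$, part (b) if $P_j$ is free), $s\in J_1(\varphi)$ forces ${\bf a}_s\in\bv_j^{<}$ in either case. Conversely, suppose $(\varphi\mid f_s)=(0,c)$ with $c=\min\{l_0^\varphi+1,l_0^s+1\}$ and ${\bf a}_s\in\bv_j^{<}$. The condition on $c$ already excludes $s$ from $J_{4}(\varphi)$ (which requires $c\le\min\{l_0^\varphi,l_0^s\}$) and from $J_3(\varphi)$ (whose contact pair has first coordinate $1$). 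Hence $s\in J_1(\varphi)\cup J_2(\varphi)$. If we had $s\in J_2(\varphi)$, then by Lemma \ref{ttt} we would get ${\bf a}_s\in\bv_j^{\geq}$, contradicting ${\bf a}_s\in\bv_j^{<}$ since $\bv_j^{<}\cap\bv_j^{\geq}=\emptyset$ (an arrow cannot simultaneously satisfy $\bv_j\nleq{\bf a}_i$ and $\bv_j\leq{\bf a}_i$). Therefore $s\in J_1(\varphi)$. Part (b) is entirely symmetric: $s\in J_2(\varphi)$ gives the contact-pair condition and, by Lemma \ref{ttt}, ${\bf a}_s\in\bv_j^{\geq}$; conversely the contact-pair condition rules out $J_3(\varphi)$ and $J_4(\varphi)$, and ${\bf a}_s\in\bv_j^{\geq}$ rules out $J_1(\varphi)$ by the disjointness of $\bv_j^{<}$ and $\bv_j^{\geq}$, leaving $s\in J_2(\varphi)$.

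There is essentially no obstacle here: the lemma is, as the text says, a straightforward consequence of Lemmas \ref{anthrax} and \ref{ttt}, and the only thing one must be slightly careful about is to invoke the correct branch of Lemma \ref{ttt} according to whether $P_j$ is a terminal satellite point of ${\mathcal T}$ or a free point (note that when $\varphi$ is a curvette at a free point, Lemma \ref{anthrax} gives $J_2(\varphi)=J_3(\varphi)=\emptyset$, so in that case both directions of (b) are vacuous and the two sides of (a) reduce to ``$s\in J_1(\varphi)$'' matched against the contact-pair condition together with ${\bf a}_s\in\bv_j^{<}$, consistent with Lemma \ref{ttt}(b)). If one wishes, the argument can be phrased uniformly: in all cases $\bv_j^{<}$ and $\bv_j^{\geq}$ partition the arrows not below/below $\bv_j$, and Lemma \ref{ttt} identifies the preimages of $J_1$ and of $J_2$ inside this partition. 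The proof is therefore short and combinatorial.
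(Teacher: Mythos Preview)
Your proposal is correct and follows essentially the same approach as the paper, which simply declares the lemma ``a straightforward consequence of the two previous lemmas'' (Lemmas~\ref{anthrax} and~\ref{ttt}) without spelling out the argument. You have unpacked precisely that deduction: the defining contact-pair condition for $J_1(\varphi)\cup J_2(\varphi)$ already excludes $J_3(\varphi)$ and $J_4(\varphi)$, and then Lemma~\ref{ttt} (together with the disjointness of $\bv_j^{<}$ and $\bv_j^{\geq}$) separates $J_1(\varphi)$ from $J_2(\varphi)$ via the position of ${\bf a}_s$.
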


\begin{exa} {\rm Denote by $\varphi_j$ a curvette at the point $P_j$ in the constellation $\mathcal C$ of Example \ref{example1}, $1\leq j\leq 17$.  The partitions of $\mathbb{J}_8$ defined the curvettes $\varphi_j$ at the points in ${\mathcal T}\cup {\mathcal S}$ are given by the following sets:
\begin{description}
  \item[${\varphi}_2$] $J_1({\varphi}_2)=\emptyset, J_2({\varphi}_2)=\emptyset, J_3({\varphi}_2)=\emptyset, J_{4,1}({\varphi}_2)=\emptyset, J_{4,2}({\varphi}_2)=\{1,2,3,4,5,6,7,8\}$.
  \item[${\varphi}_4$] $J_1({\varphi}_4)=\{1,2\}, J_2({\varphi}_4)=\emptyset,   J_3({\varphi}_4)=\emptyset, J_{4,1}({\varphi}_4)=\{5,8\}, J_{4,2}({\varphi}_4)=\{3,4,6,7\}$.
  \item[$\varphi_7$] $J_1(\varphi_7)=\emptyset,\;\;\; J_2(\varphi_7)=\{2,3,4,6,7\},\;\;\; J_3(\varphi_7)=\{1\},\;\;\; J_{4}(\varphi_7)=\{5,8\}$.
  \item[$\varphi_8$] $J_1(\varphi_8)=\{1\},\;\;\; J_2(\varphi_8)=\{3,4,6,7\},\;\;\; J_3(\varphi_8)=\{2\},\;\;\; J_{4}(\varphi_8)=\{5,8\}$.
  \item[$\varphi_{11}$] $J_1(\varphi_{11})=\{1,2\},\;\;\; J_2(\varphi_{11})=\{4\},\;\;\; J_3(\varphi_{11})=\{3\},\;\;\; J_{4}(\varphi_{11})=\{5,8,6,7\}.$
  \item[$\varphi_{13}$] $J_1(\varphi_{13})=\{1,2,3\},\;\;\; J_2(\varphi_{13})=\emptyset,\;\;\; J_3(\varphi_{13})=\{4\},\;\;\; J_{4}(\varphi_{13})=\{5,8,6,7\}$.
  \item[$\varphi_{15}$] $J_1(\varphi_{15})=\{1,2\}, \;\;\; J_2(\varphi_{15})=\emptyset, \;\;\; J_3(\varphi_{15})=\emptyset, \;\;\;J_{4,1}(\varphi_{15})=\{3,4,5,8\},$\\ $J_{4,2}(\varphi_{15})=\{6,7\}$.
      \item[$\varphi_{17}$] $J_1(\varphi_{17})=\emptyset,\;\;\; J_2(\varphi_{17})=\{8\},\;\;\; J_3(\varphi_{17})=\{5\},\;\;\; J_{4}(\varphi_{17})=\{1,2,3,4,6,7\}$.
\end{description}

}
\end{exa}


Next two lemmas will allow us to prove Proposition \ref{kuwatilla}  and to show expressions of certain candidates for log-canonical threshold.

\begin{lem}\label{bombero}
If $\varphi$ is a curvette at $P_j\in {\mathcal F}$, then
${\overline \alpha}_{j}=\frac{\betabarra_0^{\varphi}+
\betabarra_1^{\varphi}}{\sum_{i=1}^r I(\varphi, f_i)}$.
\end{lem}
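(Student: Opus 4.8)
The plan is to compute directly the numerator $a_j+1$ and the denominator $b_j$ of the candidate $\overline{\alpha}_j$ attached to the exceptional divisor $E_j$ appearing at the blowing-up of $P_j$, exploiting the interpretation of these quantities as intersection numbers with a curvette. First I would recall the standard facts, already used in the excerpt, that $b_j$ (the coefficient of $E_j$ in $\pi^*C$, written in the strict-transform basis) equals $\sum_{i=1}^r \mathbf{n}^{(i)}\cdot(\text{something})$; more precisely, since $(b_1,\dots,b_m)^t=\mathbf{P}^{-1}(\mathbf{n}_1,\dots,\mathbf{n}_m)^t$ and $\mathbf{n}_j=\sum_{i=1}^r \mathbf{n}_j^{(i)}$ is the sum over branches of the multiplicities of the strict transform of $C_i$ at $P_j$, the coefficient $b_j$ decomposes as $b_j=\sum_{i=1}^r b_j^{(i)}$, where $b_j^{(i)}$ is the corresponding coefficient for the single branch $C_i$. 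The key classical identity is that for a curvette $\varphi$ at $P_j$ one has $I(\varphi,f_i)=b_j^{(i)}$ (the intersection multiplicity of a curvette at $P_j$ with any curve equals the relevant coefficient of the total transform), so that $b_j=\sum_{i=1}^r I(\varphi,f_i)$.

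Next I would handle the numerator. By the formula $(a_1,\dots,a_m)^t=\mathbf{P}^{-1}(1,1,\dots,1)^t$, the number $a_j$ is precisely the coefficient of $E_j$ in the relative canonical divisor $K_{X/X_0}$, and it is well known (and is the $\varphi$-analogue of the computation just done, applied to the resolution of $C\cup\{\varphi\}$ rather than of $C$) that $a_j+1=I(\varphi,\varphi')$ for a second curvette $\varphi'$ at $P_j$, and moreover that $I(\varphi,\varphi')=\betabarra_0^{\varphi}+\betabarra_1^{\varphi}$. For this last equality I would invoke the description of the maximal contact values: two distinct curvettes at $P_j$ share exactly the resolution graph of $\varphi$ up to the last free point preceding (or equal to) $P_j$ and then separate, and the standard formula for the intersection multiplicity of two branches in terms of maximal contact values (for instance Lemma \ref{torero} applied with $q$ equal to the number of terminal satellite points of $\varphi$ and $c$ maximal) gives $I(\varphi,\varphi')=\betabarra_0^\varphi+\betabarra_1^\varphi$. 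Actually, for the nonsingular case $\betabarra_0^\varphi=1$ and $\betabarra_1^\varphi=l_0^\varphi+(\text{the extra satellite part})$, and one checks $I(\varphi,\varphi')=1+\betabarra_1^\varphi$ from the very definition of $\betabarra_1^\varphi$ adopted in Section \ref{results}; in the singular case it follows from part (b) or (d) of Lemma \ref{torero} together with the fact that a generic curvette behaves like $C_i$ itself up to the last characteristic exponent.

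Assembling, $\overline{\alpha}_j=\dfrac{a_j+1}{b_j}=\dfrac{I(\varphi,\varphi')}{\sum_{i=1}^r I(\varphi,f_i)}=\dfrac{\betabarra_0^\varphi+\betabarra_1^\varphi}{\sum_{i=1}^r I(\varphi,f_i)}$, which is exactly the claim. I would phrase the proof so that the two halves (numerator and denominator) are symmetric: both are instances of the statement ``the coefficient of $E_j$ in the total transform of a curve $D$ not passing through $E_j$ appropriately equals $I(\varphi,D)$ when $\varphi$ is a curvette at $P_j$,'' applied to $D=C$ (giving $b_j$) and, after viewing $K_{X/X_0}+\sum E_j^{\text{appropriate}}$ as the total transform of a curvette pencil, to $D=\varphi'$ (giving $a_j+1$). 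The main obstacle I anticipate is the numerator computation $a_j+1=I(\varphi,\varphi')=\betabarra_0^\varphi+\betabarra_1^\varphi$: making this rigorous in arbitrary characteristic requires being careful that the "generic second curvette" $\varphi'$ really does separate from $\varphi$ immediately after the last free point among the first $\ell=l_0^\varphi+1$ points (so that the contact pair is $(q,c)$ with $c$ taking its maximal value), and then quoting the correct clause of Lemma \ref{torero}; the denominator identity $b_j=\sum_i I(\varphi,f_i)$ is comparatively routine from the proximity-matrix formulas, but I would still spell out the additivity $b_j=\sum_i b_j^{(i)}$ and the curvette interpretation $b_j^{(i)}=I(\varphi,f_i)$ carefully.
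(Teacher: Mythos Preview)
Your treatment of the denominator is essentially the same as the paper's: decompose $b_j$ over the branches and identify each $b_j^{(i)}$ with $I(\varphi,f_i)$ via Noether's formula and the proximity matrix.

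The numerator argument, however, contains a genuine error. You assert that $a_j+1=I(\varphi,\varphi')$ for a second curvette $\varphi'$ at $P_j$, and then that $I(\varphi,\varphi')=\betabarra_0^\varphi+\betabarra_1^\varphi$. Both identities fail when $P_j$ is a satellite point and the curvette is singular. From $(a_1,\dots,a_m)^t=\mathbf{P}^{-1}(1,\dots,1)^t$ one gets $a_j=\sum_k m_k(\varphi)$, the \emph{sum} of multiplicities of the curvette along the chain, whereas Noether's formula gives $I(\varphi,\varphi')=\sum_k m_k(\varphi)\,m_k(\varphi')=\sum_k m_k(\varphi)^2$; these coincide only when all $m_k(\varphi)=1$. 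Moreover, for two curvettes at a terminal satellite point one has $(\varphi\mid\varphi')=(1,0)$, and Lemma~\ref{torero}(b) yields $I(\varphi,\varphi')=\betabarra_0^\varphi\betabarra_1^\varphi$, a \emph{product}, not the sum you claim. A quick check with the cusp ($\betabarra_0=2$, $\betabarra_1=3$, multiplicity sequence $2,1,1$) gives $a_j+1=5$ but $I(\varphi,\varphi')=6$.

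The paper avoids this pitfall entirely: it observes directly that $a_j$ is the sum of the multiplicities of a curvette at $P_j$ (from the row of $\mathbf{P}^{-1}$), and then invokes a result from Campillo's book (Lemma 3.3.6 together with Proposition 4.3.5) to conclude $a_j+1=\betabarra_0^\varphi+\betabarra_1^\varphi$ in the singular case; the nonsingular case is immediate from the ad hoc definition of $\betabarra_1^\varphi$. You should replace your intersection-with-a-second-curvette idea by this direct multiplicity computation.
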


\begin{proof}
Consider the proximity matrix $\bold{P}$ of the configuration
${\mathcal C}$ associated with the minimal log resolution of the curve $C$. As we have said, the vector
$(a_1,a_2,\ldots,a_m)$ used in (\ref{WW}) can be computed as follows:
$(a_1,\ldots,a_m)^t=\bold{P}^{-1}(1,\ldots,1)^t$. The reason comes
from the fact that $K_{X/X_0} = \sum_{j=1}^m E_j^*$. Moreover, the entries of the $j$th row of $\bold{P}^{-1}$, $1 \leq j\leq
m$, are the multiplicities at the points of $\mathcal C$ of (the strict transforms of) a
curvette at $P_j$. Therefore, it holds that $a_j$ is the sum of the
multiplicities at the points of $\mathcal C$ of the strict
transforms of a curvette at $P_j$. If the curve that $\varphi$
provides, $C_\varphi$, is not regular, then the equality
$a_j+1=\betabarra_0^{\varphi}+\betabarra_1^{\varphi}$ follows from
\cite[Lemma 3.3.6]{Campillo} and the fact that
$\betabarra_0^{\varphi}$ and $\betabarra_1^{\varphi}$ coincide with
the  first two  characteristic exponents of $C_\varphi$ \cite[Proposition 4.3.5]{Campillo}. Otherwise
$\betabarra_0^{\varphi}=1$ and $\betabarra_1^{\varphi}$ is, as we
have defined above, the number of points through which the strict
transforms of $C_\varphi$ pass. Then, it is clear that the equality
$a_j+1=\betabarra_0^{\varphi}+\betabarra_1^{\varphi}$ is also true.

We have just proved that the numerator of ${\overline
\alpha}_{\varphi}$ is as stated. With respect to the denominator, it
holds that
$$(b_1,b_2,\ldots,b_m)^t:=\mathbf{P}^{-1}({\bf n}_1, {\bf n}_2, \ldots,{\bf n}_m)^t=
\sum_{i=1}^r \mathbf{P}^{-1}({\bf n}_{i1},{\bf n}_{i2},\ldots,{\bf
n}_{im})^t,$$
 where ${\bf n}_{ij}$ denotes the multiplicity at $P_j$ of the strict
transform of $C_i$. By Noether's formula, the $j$th component of the
vector $\mathbf{P}^{-1}({\bf n}_{i1},\ldots,{\bf n}_{im})^t$ is the
intersection multiplicity between $f_i$ and a curvette ${\varphi}$
at $P_j$. Therefore $b_j=\sum_{i=1}^r I(\varphi, f_i)$, which
concludes the proof.
\end{proof}

\begin{lem}\label{banderillero}
The candidates for log-canonical threshold given by the minimum terminal satellite points $P_{t_i}$, $1 \leq i \leq n$, can be computed as
$${\overline
\alpha}_{t_i}=\frac{\betabarra_0^i+\betabarra_1^i}{\sum_{s=1}^r
\delta_{is}},$$
where the values $\delta_{is}$ are defined as in the statement of Theorem \ref{gordo2}.
\end{lem}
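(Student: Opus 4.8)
The plan is to start from Lemma \ref{bombero}, which tells us that for a curvette $\varphi$ at $P_{t_i}$ we have ${\overline \alpha}_{t_i}=\frac{\betabarra_0^{\varphi}+\betabarra_1^{\varphi}}{\sum_{s=1}^r I(\varphi,f_s)}$. Since $P_{t_i}$ is the minimum terminal satellite point for $C_i$, a curvette $\varphi$ at $P_{t_i}$ has the same first two maximal contact values as $C_i$, i.e. $\betabarra_0^{\varphi}=\betabarra_0^i$ and $\betabarra_1^{\varphi}=\betabarra_1^i$; this is because the part of the dual graph up to the first terminal satellite point is determined by $\{\betabarra_0^i,\betabarra_1^i\}$, and a curvette at that point shares exactly that initial configuration while being nonsingular beyond it (so its maximal contact values are precisely $\betabarra_0^i<\betabarra_1^i$). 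This immediately identifies the numerator of ${\overline \alpha}_{t_i}$ with $\betabarra_0^i+\betabarra_1^i$, matching the claimed formula.

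The remaining work is to show $\sum_{s=1}^r I(\varphi,f_s)=\sum_{s=1}^r \delta_{is}$, which I would prove termwise: $I(\varphi,f_s)=\delta_{is}$ for each $s$. First I would handle $s=i$: here $I(\varphi,f_i)=\betabarra_0^i\betabarra_1^i$ by Noether's formula applied along $\mathcal{C}_i$ (a curvette at $P_{t_i}$ meets $C_i$ with multiplicity equal to the value $\betabarra_1^i$ scaled appropriately — more precisely $I(\varphi,f_i)$ equals the product of multiplicities, which for the first two maximal contact values gives $\betabarra_0^i\betabarra_1^i$), agreeing with $\delta_{ii}$. For $s\neq i$, I would split according to the contact pair $(\varphi\mid f_s)$. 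If $(\varphi\mid f_s)=(q,c)$ with $q\geq 1$, then by Lemma \ref{torero}(a) we get $\betabarra_0^{\varphi}\betabarra_1^s=\betabarra_1^{\varphi}\betabarra_0^s\leq I(\varphi,f_s)$; but since $\varphi$ is a curvette whose configuration is contained in that of $C_i$ up to $P_{t_i}$, one checks $q=1$ and $I(\varphi,f_s)=\betabarra_0^i\betabarra_1^s$ exactly (using Lemma \ref{torero}(b) when $c=0$, and a short argument when $c>0$ showing the extra free points past $P_{t_i}$ contribute nothing new because $\varphi$ is nonsingular there). This is exactly the first case of $\delta_{is}$. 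If instead $q=0$, then $\varphi$ and $f_s$ separate before reaching $P_{t_i}$, so $I(\varphi,f_s)=I(f_i,f_s)$ since $\varphi$ and $C_i$ agree up to (and slightly beyond, in the relevant free chain) the separation point; this is the "otherwise" case $\delta_{is}=I(f_i,f_s)$.

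The main obstacle I anticipate is the bookkeeping in the case $q=1$, $c>0$: one must verify carefully that the free points past the (unique) common terminal satellite point, through which both $\varphi$ and $C_s$ pass, do not alter the intersection multiplicity beyond $\betabarra_0^i\betabarra_1^s$ — equivalently, that $\betabarra_0^{\varphi}\betabarra_1^s=\betabarra_1^{\varphi}\betabarra_0^s$ is already the full intersection number. This should follow from Noether's formula by summing products of multiplicities along $\mathcal{C}_{\varphi}\cap\mathcal{C}_s$ and observing that the multiplicity of the strict transform of $\varphi$ drops to $1$ (or stays constant in a way that is absorbed) precisely because $\varphi$ is a nonsingular curvette, together with the defining property that $P_{t_i}$ is the \emph{minimum} terminal satellite point for $C_i$ so no further characteristic data of $C_i$ intervenes. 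I would also need to observe at the outset that $\varphi$ and $f_s$ indeed have $\pi$ as a common log resolution (or pass to a suitable refinement), but since $\varphi$ is a curvette at a point of $\mathcal{C}$ this is immediate from the construction. Once these termwise identities are in place, summing over $s$ gives the denominator $\sum_{s=1}^r\delta_{is}$ and completes the proof.
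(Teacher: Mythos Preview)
Your overall strategy---apply Lemma \ref{bombero} to a curvette $\varphi$ at $P_{t_i}$ and identify numerator and denominator---is exactly the paper's approach. However, there is a genuine error in the execution. You assert that a curvette $\varphi$ at $P_{t_i}$ satisfies $\betabarra_0^{\varphi}=\betabarra_0^i$ and $\betabarra_1^{\varphi}=\betabarra_1^i$. This is false whenever $e_1^i:=\gcd(\betabarra_0^i,\betabarra_1^i)>1$, i.e.\ whenever $C_i$ has more than one characteristic exponent. The portion of the resolution up to $P_{t_i}$ is determined by the \emph{ratio} $\betabarra_1^i/\betabarra_0^i$, and the curvette at $P_{t_i}$ has that ratio with coprime entries; thus $\betabarra_0^{\varphi}=\betabarra_0^i/e_1^i$ and $\betabarra_1^{\varphi}=\betabarra_1^i/e_1^i$. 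Concretely, the multiplicity of the strict transform of $\varphi$ at each point $P_j\lesssim P_{t_i}$ is $1/e_1^i$ times the multiplicity of $C_i$ there. Consequently your termwise identities $I(\varphi,f_s)=\delta_{is}$ are all wrong by the same factor: the correct statement (which is what the paper proves via Noether's formula and Lemma \ref{torero}) is $I(\varphi,f_s)=\delta_{is}/e_1^i$.

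Because the same scalar $e_1^i$ appears in both numerator and denominator, the ratio $\overline{\alpha}_{t_i}$ you obtain is still correct, so the argument is easily repaired once you insert $e_1^i$ throughout. But as written, the justification ``a curvette at that point shares exactly that initial configuration \ldots\ so its maximal contact values are precisely $\betabarra_0^i<\betabarra_1^i$'' is incorrect and should be replaced by the observation above. A side remark: your anticipated ``main obstacle'' (the case $q=1$, $c>0$ for $(\varphi\mid f_s)$) does not actually arise, since a curvette at $P_{t_i}$ meets $E_{t_i}$ at a point not in $\mathcal{C}$ and hence shares no point of $\mathcal{C}$ beyond $P_{t_i}$ with any $C_s$.
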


\begin{proof}

Let $\varphi$ be a curvette at $P_{t_i}$ and set
$e_1^i:=\gcd(\betabarra_0^i, \betabarra_1^i)$. Using Noether's
formula and Lemma \ref{torero}, it is easy to deduce that
$$\betabarra_0^{\varphi}=\frac{\betabarra_0^i}{e_1^i}\;\;\;\;\;\mbox{and}
\;\;\;\; I(\varphi,f_s)=\frac{\delta_{is}}{e_1^i}.$$ Then, the
result follows from Lemma \ref{bombero}.

\end{proof}

Now we are ready to give a {\bf proof of Proposition
\ref{kuwatilla}}. Take a curvette $\varphi$ at a point $P_k\in
{\mathcal C}\setminus {\mathcal F}$ and let $i_0\in
\{1,2,\ldots,n\}$ such that $P_k\in {\mathcal C}_{i_0}$. It is
enough to show that ${\overline \alpha}_{k}\geq {\overline
\alpha}_{t_{i_0}}$. Let $\psi$ be a curvette at $P_{t_{i_0}}$. By the proof of
Lemma \ref{bombero}, there exists a positive integer $\epsilon$ such
that
$${\overline \alpha}_{k}=\frac{\betabarra_1^{\varphi}+
\betabarra_0^{\varphi}+\epsilon}{\sum_{s=1}^r I(\varphi, f_s)}$$
and
$${\overline \alpha}_{t_{i_0}}=\frac{\betabarra_1^{\psi}
+\betabarra_0^{\psi}}{\sum_{s=1}^r I(\psi, f_s)}.$$ Some
straightforward computations show that the inequality ${\overline
\alpha}_{k}\geq {\overline \alpha}_{t_{i_0}}$ holds if the
inequality
\begin{equation}\label{fua}(\betabarra_1^{\varphi}+\betabarra_0^{\varphi}
+\epsilon)I(\psi,f_s)\geq
(\betabarra_1^{\psi}+\betabarra_0^{\psi})I(\varphi,f_s)
\end{equation}
is true for all $s \in \mathbb{J}_r$.

Let us consider the partition of $\mathbb{J}_r$,
$\{J_l(\psi)\}_{l=1}^4$, given at the top of this section. Assume
first $s\in J_3(\psi)$ and thus $P_{t_{i_0}}=P_{t_s}$. Since $\pi$
is a log resolution of the curve $C_s$, it happens
$\pi^*C_s=\tilde{C}_s+\sum_{j=1}^m b_{js}E_j$ for some nonnegative
integers $b_{js}$ and the quotient
$\overline{\alpha}_{k}':=\frac{\betabarra_1^{\varphi}+\betabarra_0^{\varphi}+\epsilon}{I(\varphi,
f_s)}$ coincides with the candidate $\frac{a_k+1}{b_{k}}$ for log-canonical threshold of
$C_s$. But, by the proof of Lemma
\ref{banderillero}, the quotient
$\overline{\alpha}_{t_{i_0}}':=(\betabarra_1^{\psi}+\betabarra_0^{\psi})/{I(\psi,
f_s)}$ coincides with
$$\frac{\betabarra_0^s+\betabarra_1^s}{\betabarra_0^s\betabarra_1^s}
=\frac{1}{\betabarra_0^s}+\frac{1}{\betabarra_1^s},$$ which is the
log-canonical threshold of $C_s$ (see \cite{jar}). Therefore
$\overline{\alpha}_{k}'\geq \overline{\alpha}_{t_{i_0}}'$ and, then, inequality
(\ref{fua}) holds.

Now suppose that $s$ is in other set of the partition. By Lemma
\ref{torero} it happens that
\begin{equation*}
I(\varphi,f_s):= \left \{ \begin{array}{ll}
\betabarra_1^s\betabarra_0^\varphi &  \mathrm{if} \; \; s\in J_1(\psi) \\
 \betabarra_0^s\betabarra_1^\varphi  & \mathrm{if} \; \; s\in J_2(\psi)\\
  c\betabarra_0^s \betabarra_0^\varphi & \mathrm{if} \; \; s\in J_4(\psi),
\end{array} \right.
\end{equation*}
where $c$ is such that $(\varphi\mid f_s)=(0,c)$. Writing $\psi$
instead of $\varphi$, we obtain an analogous formula which can be
checked using the same argument.

Proposition \ref{kuwatilla} will be proved if we show that
\begin{equation}\label{fua2}(\betabarra_1^{\varphi}+\betabarra_0^{\varphi})
I(\psi,f_s)\geq
(\betabarra_1^{\psi}+\betabarra_0^{\psi})I(\varphi,f_s)\end{equation}
because this inequality implies that of (\ref{fua}). And
(\ref{fua2}) is equivalent to
$$\frac{\betabarra_1^{\varphi}}{\betabarra_0^{\varphi}}
\geq\frac{\betabarra_1^{\psi}}{\betabarra_0^{\psi}}\; \;
\left(\mbox{respectively, }
\frac{\betabarra_1^{\varphi}}{\betabarra_0^{\varphi}}\leq
\frac{\betabarra_1^{\psi}}{\betabarra_0^{\psi}},\;\;
\frac{\betabarra_1^{\varphi}}{\betabarra_0^{\varphi}}\geq
\frac{\betabarra_1^{\psi}}{\betabarra_0^{\psi}}\right),$$ whenever
$s \in J_1(\psi)$ (respectively, $s \in J_2(\psi)$,
$s \in J_4(\psi)$), which concludes the proof since
$$
\frac{\betabarra_1^{\varphi}}{\betabarra_0^{\varphi}} =
\frac{\betabarra_1^{\psi}}{\betabarra_0^{\psi}}.
$$
It only remains to add that this equality is true because $\varphi$ and $\psi$ share its minimum terminal satellite point $P_{t_{i_0}}$.

\subsection{Theorem \ref{gordo2}: auxiliary results and proof}
\label{32}

In this section we keep the notations introduced in the previous
ones.

Firstly and  by means of the following three lemmas, we will prove (1) of Theorem \ref{gordo2}. The first lemma shows that $\sigma$ is an increasing
function on the set ${\mathcal
V}_{\mathcal F}$ (under the  ordering $\leq$ defined before the definitions in (\ref{order+})) and it follows from the fact that ${\bf
v}_{k_1}^{<}\subseteq {\bf v}_{k_2}^{<}$ and ${\bf
v}_{k_2}^{\geq}\subseteq {\bf v}_{k_1}^{\geq}$, whenever ${\bf
v}_{k_1}, {\bf v}_{k_2} \in {\mathcal V}_{\mathcal F}$ and
${\bf v}_{k_1}< {\bf v}_{k_2}$.

\begin{lem}\label{minero}
Let ${\bf v}_{k_1}$ and ${\bf v}_{k_2}$ be two vertices in ${\mathcal V}_{\mathcal F}$ such that ${\bf v}_{k_1}< {\bf v}_{k_2}$. Then $\sigma({\bf v}_{k_1})\leq \sigma({\bf v}_{k_2})$.
\end{lem}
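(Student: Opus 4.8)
The statement to prove is Lemma~\ref{minero}: that $\sigma$ is non-decreasing along the tree order on ${\mathcal V}_{\mathcal F}$. The paper itself already tells us the mechanism: the inclusions ${\bf v}_{k_1}^{<}\subseteq {\bf v}_{k_2}^{<}$ and ${\bf v}_{k_2}^{\geq}\subseteq {\bf v}_{k_1}^{\geq}$ when ${\bf v}_{k_1}<{\bf v}_{k_2}$. So the plan is:

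1. First establish those two inclusions. If ${\bf v}_{k_1}<{\bf v}_{k_2}$ and ${\bf a}_i\in {\bf v}_{k_1}^{<}$, i.e. ${\bf v}_{k_1}\nleq {\bf a}_i$, then certainly ${\bf v}_{k_2}\nleq {\bf a}_i$ (otherwise ${\bf v}_{k_1}\le {\bf v}_{k_2}\le {\bf a}_i$), so ${\bf a}_i\in {\bf v}_{k_2}^{<}$; dually, ${\bf v}_{k_2}\le {\bf a}_i$ forces ${\bf v}_{k_1}\le {\bf a}_i$. This is the trivial part. Consequently every arrow ${\bf a}_i$ in ${\bf v}_{k_1}^{<}$ contributes $+c_{k_1 i}\betabarra_0^i$ to $\sigma({\bf v}_{k_1})$ and also appears in ${\bf v}_{k_2}^{<}$ contributing $+c_{k_2 i}\betabarra_0^i$ to $\sigma({\bf v}_{k_2})$; the arrows that ``switch sides'' move from ${\bf v}_{k_1}^{\geq}$ to... no, from ${\bf v}_{k_2}^{\geq}$ into ${\bf v}_{k_1}^{\geq}$, i.e. the new arrows in going from $k_1$ to $k_2$ leave the $\geq$-bucket and enter the $<$-bucket, changing their contribution from $-\betabarra_0^i$ to $+c_{k_2 i}\betabarra_0^i\ge 0$, a non-negative change.

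2. The real content is therefore the monotonicity of the coefficient $c_{ji}$ in $j$ for a \emph{fixed} arrow ${\bf a}_i$ that lies in ${\bf v}_{k_1}^{<}$ (hence in ${\bf v}_{k_2}^{<}$): we must show $c_{k_1 i}\le c_{k_2 i}$. Here one splits on which clause of the definition of $c_{ji}$ applies. Write ${\bf v}_{p_1}=\mathrm{ter}([{\bf v}_1,{\bf a}_i]\cap[{\bf v}_1,{\bf v}_{k_1}])$ and ${\bf v}_{p_2}=\mathrm{ter}([{\bf v}_1,{\bf a}_i]\cap[{\bf v}_1,{\bf v}_{k_2}])$; since ${\bf v}_{k_1}<{\bf v}_{k_2}$ one has ${\bf v}_{p_1}\le {\bf v}_{p_2}$, and in fact ${\bf v}_{p_1}={\bf v}_{p_2}$ unless the branch point toward ${\bf a}_i$ sits strictly between ${\bf v}_{k_1}$ and ${\bf v}_{k_2}$. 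If the ``otherwise'' clause applies for $k_1$, then $c_{k_1 i}=\betabarra_1^i/\betabarra_0^i$, and I expect to argue (using that the arrow ${\bf a}_i$ stays in the $<$-bucket and Lemma~\ref{torero}, specifically that a curvette freely separated from $C_i$ forces $c\le\min\{l_0^\cdot,l_0^i\}$, so $\betabarra_1^i/\betabarra_0^i\ge$ that cardinality) that $c_{k_2 i}$ equals $\betabarra_1^i/\betabarra_0^i$ as well, or is a cardinality bounded above by it. If instead the ${\mathcal S}$-clause holds for $k_1$ — $\mathrm{ter}$ lies in ${\mathcal S}$ and $c_{k_1 i}$ is a cardinality of free vertices — then for $k_2$ either the same branch vertex is reached (and the counted set only grows, since $[{\bf v}_1,{\bf v}_{k_1}]\subseteq[{\bf v}_1,{\bf v}_{k_2}]$ forces $[{\bf v}_1,{\bf a}_i]\cap[{\bf v}_1,{\bf v}_{k_1}]\subseteq[{\bf v}_1,{\bf a}_i]\cap[{\bf v}_1,{\bf v}_{k_2}]$), giving $c_{k_1 i}\le c_{k_2 i}$; or a later branch vertex not in ${\mathcal S}$ is reached, whence $c_{k_2 i}=\betabarra_1^i/\betabarra_0^i$ and one must check this dominates the cardinality $c_{k_1 i}$, again via Lemma~\ref{torero}(c)/(d) applied to a curvette at $P_{k_1}$ and $f_i$.

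3. Putting it together: $\sigma({\bf v}_{k_2})-\sigma({\bf v}_{k_1})=\sum_{{\bf a}_i\in {\bf v}_{k_1}^{<}}(c_{k_2 i}-c_{k_1 i})\betabarra_0^i+\sum_{{\bf a}_i\in {\bf v}_{k_1}^{\geq}\setminus {\bf v}_{k_2}^{\geq}}(c_{k_2 i}\betabarra_0^i+\betabarra_0^i)$, and both sums are non-negative by step~2 and by $c_{k_2 i}\ge 0$ respectively. This finishes the lemma.

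\textbf{Main obstacle.} The genuinely delicate point is step~2: controlling how $c_{ji}$ changes when the meeting vertex $\mathrm{ter}([{\bf v}_1,{\bf a}_i]\cap[{\bf v}_1,{\bf v}_j])$ passes from an initial separating point of ${\mathcal S}$ to a vertex outside ${\mathcal S}$, where the defining clause of $c_{ji}$ flips from a combinatorial count to the ratio $\betabarra_1^i/\betabarra_0^i$. Showing the inequality is preserved across that flip is exactly where one needs the quantitative content of Lemma~\ref{torero} (the relations between intersection multiplicities, contact pairs, and the $\betabarra$'s) together with the geometric meaning of ``freely separated'' spelled out in Remark~\ref{nota2}; everything else is bookkeeping on the tree. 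I would also double-check the edge case where ${\bf v}_{k_1}$ or ${\bf v}_{k_2}$ is itself the branch vertex toward ${\bf a}_i$, and the case ${\bf v}_{k_1},{\bf v}_{k_2}$ lying in the same ${\mathcal V}$-free stretch (covered trivially by Remark~\ref{nota}).
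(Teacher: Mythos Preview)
Your overall decomposition in Step~3 is correct and matches the paper's one-line justification (the inclusions ${\bf v}_{k_1}^{<}\subseteq {\bf v}_{k_2}^{<}$ and ${\bf v}_{k_2}^{\geq}\subseteq {\bf v}_{k_1}^{\geq}$). However, the ``main obstacle'' you single out in Step~2 is illusory, and the case analysis you sketch there is unnecessary.

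Here is why. Suppose ${\bf a}_i\in {\bf v}_{k_1}^{<}$, i.e.\ ${\bf v}_{k_1}\nleq {\bf a}_i$. In a tree this forces the branch vertex ${\bf v}_{p_1}:=\mathrm{ter}\bigl([{\bf v}_1,{\bf a}_i]\cap[{\bf v}_1,{\bf v}_{k_1}]\bigr)$ to satisfy ${\bf v}_{p_1}<{\bf v}_{k_1}$ \emph{strictly}. Since ${\bf v}_{k_1}<{\bf v}_{k_2}$, the path $[{\bf v}_1,{\bf v}_{k_2}]$ passes through ${\bf v}_{k_1}$, and hence the path $[{\bf v}_1,{\bf a}_i]$ already left $[{\bf v}_1,{\bf v}_{k_2}]$ at ${\bf v}_{p_1}$. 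Therefore ${\bf v}_{p_2}={\bf v}_{p_1}$ exactly; the scenario you describe (``the branch point toward ${\bf a}_i$ sits strictly between ${\bf v}_{k_1}$ and ${\bf v}_{k_2}$'') is impossible under the hypothesis ${\bf a}_i\in {\bf v}_{k_1}^{<}$. Now observe that both the condition ``$\mathrm{ter}(\cdot)\in{\mathcal S}$'' and the value in either clause of the definition of $c_{ji}$ depend only on the set $[{\bf v}_1,{\bf a}_i]\cap[{\bf v}_1,{\bf v}_j]$, which we have just shown is the same for $j=k_1$ and $j=k_2$. Hence $c_{k_1 i}=c_{k_2 i}$ outright, and the first sum in your Step~3 formula vanishes. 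No appeal to Lemma~\ref{torero} or Remark~\ref{nota2} is needed; the lemma is pure tree combinatorics, which is why the paper disposes of it in one sentence.
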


\begin{lem}

Let ${\bf v}_k\in {\mathcal V}$ and let $D: =\left\{{\bf v}_j\in {\mathcal
V}_{\mathcal F}\setminus \{{\bf v}_k\}\mid {\bf v}_j\geq {\bf v}_k  \mbox { {\rm and} } \sigma({\bf
v}_j)<0 \right\}$. Then, $D$ has, at most, one maximal element (with
respect to the order relation $\leq$).
\end{lem}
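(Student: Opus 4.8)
The plan is to argue by contradiction, supposing that $D$ has two distinct maximal elements ${\bf v}_{j_1}$ and ${\bf v}_{j_2}$. Since both lie in ${\mathcal V}_{\mathcal F}$ and both satisfy ${\bf v}_j\geq {\bf v}_k$, the distinguished paths $[{\bf v}_1,{\bf v}_{j_1}]$ and $[{\bf v}_1,{\bf v}_{j_2}]$ both contain ${\bf v}_k$, so inside the subtree rooted at ${\bf v}_k$ the two vertices diverge at some branching vertex ${\bf v}_b$ with ${\bf v}_k\leq {\bf v}_b$ and ${\bf v}_b\leq {\bf v}_{j_1}$, ${\bf v}_b\leq {\bf v}_{j_2}$, but ${\bf v}_{j_1}$ and ${\bf v}_{j_2}$ incomparable. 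First I would observe that ${\bf v}_b$ itself must lie in ${\mathcal V}_{\mathcal F}$: it is a point of ${\mathcal C}$ through which strict transforms corresponding to both $j_1$ and $j_2$ "pass", and since ${\bf v}_{j_1},{\bf v}_{j_2}\geq {\bf v}_k$ with ${\bf v}_k\in{\mathcal V}\subseteq{\mathcal V}_{\mathcal F}$, the branching occurs within ${\mathcal F}$; a short check using the description of ${\mathcal F}$ (as the points below some terminal satellite point, together with the constellations of the nonsingular branches) confirms ${\bf v}_b\in{\mathcal V}_{\mathcal F}$.

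Next I would apply Lemma \ref{minero} (monotonicity of $\sigma$ along $\leq$): since ${\bf v}_b\leq {\bf v}_{j_1}$ we get $\sigma({\bf v}_b)\leq \sigma({\bf v}_{j_1})<0$, so ${\bf v}_b$ itself belongs to $D$. But ${\bf v}_b\leq {\bf v}_{j_1}$ and ${\bf v}_b\leq {\bf v}_{j_2}$ while ${\bf v}_{j_1},{\bf v}_{j_2}$ are strict descendants of ${\bf v}_b$; moreover ${\bf v}_{j_1}\neq {\bf v}_b$, since otherwise ${\bf v}_b={\bf v}_{j_1}\leq {\bf v}_{j_2}$ would contradict maximality of ${\bf v}_{j_1}$. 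Hence ${\bf v}_b\in D$ with ${\bf v}_b< {\bf v}_{j_1}$, contradicting that ${\bf v}_{j_1}$ is maximal in $D$. (The same argument works with the roles of $j_1$ and $j_2$ exchanged.) Therefore $D$ cannot have two distinct maximal elements, which is the assertion.

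The only genuinely delicate point is the claim that the branching vertex ${\bf v}_b$ lies in ${\mathcal V}_{\mathcal F}$, i.e. that one does not leave ${\mathcal F}$ by passing from ${\bf v}_k$ up to ${\bf v}_b$; everything after that is immediate from Lemma \ref{minero} plus the definition of maximality. I expect to handle this by noting that ${\mathcal F}$ is "downward closed with respect to splitting": if two vertices of ${\mathcal V}_{\mathcal F}$ both dominate a common vertex ${\bf v}_k\in{\mathcal V}$, every vertex on the geodesics joining them through ${\bf v}_k$ lies in ${\mathcal V}_{\mathcal F}$, because such a vertex is $\leq$ some terminal satellite point (the one governing $j_1$ or $j_2$) or belongs to a nonsingular branch's constellation — in either case it is in ${\mathcal F}$ by definition. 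An alternative, cleaner route avoiding this discussion entirely is to run the contradiction directly on the path: take ${\bf v}_{j_1},{\bf v}_{j_2}$ maximal in $D$, let ${\bf v}_b$ be their meet in the tree, and use that $D$ is, by Lemma \ref{minero}, an up-set inside $\{{\bf v}_j\in{\mathcal V}_{\mathcal F}\mid {\bf v}_j\geq{\bf v}_k\}$; an up-set in a rooted tree below a fixed root has a unique minimal element, and two incomparable maximal elements would force that minimal element to sit strictly below both, again contradicting maximality.
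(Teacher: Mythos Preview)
Your argument has a genuine gap at the decisive step. You produce the meet ${\bf v}_b$ of the two supposed maximal elements, show via Lemma~\ref{minero} that $\sigma({\bf v}_b)<0$, and then claim that ``${\bf v}_b\in D$ with ${\bf v}_b<{\bf v}_{j_1}$ contradicts that ${\bf v}_{j_1}$ is maximal in $D$.'' But maximality of ${\bf v}_{j_1}$ only says there is no element of $D$ \emph{strictly above} ${\bf v}_{j_1}$; finding an element strictly \emph{below} it is no contradiction at all. The same confusion recurs in your alternative route: Lemma~\ref{minero} makes $\{\sigma<0\}$ a \emph{down-set} (not an up-set), and a down-set in a rooted tree can perfectly well have several incomparable maximal elements. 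So monotonicity of $\sigma$ alone cannot yield the conclusion; something specific to the formula for $\sigma$ is needed.

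The paper's proof supplies exactly that missing ingredient. From the incomparability of two maximal elements ${\bf v}_{j_1},{\bf v}_{j_2}$ one has ${\bf v}_{j_1}^{\geq}\subseteq{\bf v}_{j_2}^{<}$ and ${\bf v}_{j_2}^{\geq}\subseteq{\bf v}_{j_1}^{<}$. Using $\sigma({\bf v}_{j_2})<0$ together with $c_{j_2 i}\geq 1$ gives
\[
\sum_{{\bf a}_i\in{\bf v}_{j_2}^{\geq}}\bar\beta_0^i
\;>\;\sum_{{\bf a}_i\in{\bf v}_{j_2}^{<}}c_{j_2 i}\bar\beta_0^i
\;\geq\;\sum_{{\bf a}_i\in{\bf v}_{j_1}^{\geq}}c_{j_2 i}\bar\beta_0^i
\;\geq\;\sum_{{\bf a}_i\in{\bf v}_{j_1}^{\geq}}\bar\beta_0^i,
\]
and the symmetric argument from $\sigma({\bf v}_{j_1})<0$ gives the reverse strict inequality, a contradiction. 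This is where the actual content lies: the incomparability forces each vertex's ``negative'' sum to be dominated by a piece of the other's ``positive'' sum, and the coefficients $c_{ji}\geq 1$ let you strip them off. Your approach, resting only on monotonicity, never invokes this structure and therefore cannot close the argument.
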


\begin{proof}
Reasoning by contradiction, assume the existence of two maximal
elements ${\bf v}_{j_1},{\bf v}_{j_2}$ in $D$.
One has that
$$\sigma({\bf v}_{j_a})=\sum_{{\bf a}_i\in {\bf v}_{j_a}^{<}} c_{j_{a}i}
\betabarra_0^i-\sum_{{\bf a}_i\in {\bf v}_{j_a}^{\geq}}\betabarra_0^i<0,$$
for $a=1,2$.

On the one hand we have that ${\bf v}_{j_1}^{\geq}\subseteq {\bf
v}_{j_2}^{<}$ and, therefore,
$$\sum_{{\bf a}_i\in {\bf v}_{j_2}^{\geq}}
\betabarra_0^i> \sum_{{\bf a}_i\in {\bf v}_{j_2}^{<}} c_{j_{2}i}
\betabarra_0^i\geq \sum_{{\bf a}_i\in {\bf v}_{j_1}^{\geq}}
c_{j_{2}i} \betabarra_0^i \geq \sum_{{\bf a}_i\in {\bf
v}_{j_1}^{\geq}} \betabarra_0^i.$$

On the other hand, by a symmetric reasoning, it holds that
$$\sum_{{\bf a}_i\in {\bf v}_{j_1}^{\geq }} \betabarra_0^i>
\sum_{{\bf a}_i\in {\bf v}_{j_2}^{\geq}} \betabarra_0^i,$$ because
${\bf v}_{j_2}^{\geq}\subseteq  {\bf v}_{j_1}^{<}$, which concludes
the proof.

\end{proof}

\begin{lem}\label{alonso}
There exists a vertex ${\bf v}_k\in {\mathcal V}_{\mathcal F}$
satisfying the conditions:
\begin{itemize}
\item[(a)] $\sigma ({\bf v}_j)< 0$ for all ${\bf v}_j\in [{\bf v}_1, {\bf v}_{k}]$ and
\item[(b)] $\sigma ({\bf v}_j)\geq 0$ for all ${\bf v}_j\in {\mathcal V}_{\mathcal F}\setminus [{\bf v}_1, {\bf v}_{k}]$.
\end{itemize}
\end{lem}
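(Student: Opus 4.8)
The plan is to set
\[
N := \{\mathbf{v}_j \in \mathcal{V}_{\mathcal{F}} \mid \sigma(\mathbf{v}_j) < 0\},
\]
show that $N$ is nonempty and totally ordered by $\leq$, and take $\mathbf{v}_k := \max N$. Nonemptiness is immediate: $P_1 \in \mathcal{F}$ (since $C$ is singular, either some branch is singular, whence $P_1 \lesssim P_{t_1}$, or all branches are nonsingular but then $r \geq 2$ and $P_1 \in \mathcal{C}_{n+1}$), and for $\mathbf{v}_1$ one has $\mathbf{v}_1^{<} = \emptyset$ and $\mathbf{v}_1^{\geq} = \{\mathbf{a}_1, \ldots, \mathbf{a}_r\}$, so $\sigma(\mathbf{v}_1) = -\sum_{i=1}^r \betabarra_0^i < 0$, i.e.\ $\mathbf{v}_1 \in N$. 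Throughout one uses that $\mathcal{V}_{\mathcal{F}}$ is a connected subtree of $\Gamma(C)$ containing $\mathbf{v}_1$; this is clear from the construction of $\mathcal{F}$, as $\mathcal{V}_{\mathcal{F}}$ is the union, over all branches $C_i$, of the (connected, $\mathbf{v}_1$-rooted) initial segments of the dual graphs of the $C_i$. In particular, for any $\mathbf{v} \in \mathcal{V}_{\mathcal{F}}$ the whole path $[\mathbf{v}_1, \mathbf{v}]$ lies in $\mathcal{V}_{\mathcal{F}}$, so $\sigma$ is defined on it.

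Granting that $N$ is a chain, set $\mathbf{v}_k := \max N$ (well defined, $N$ being finite, nonempty and totally ordered); note $\mathbf{v}_k \in \mathcal{V}_{\mathcal{F}}$. Every $\mathbf{v}_j \in N$ satisfies $\mathbf{v}_j \leq \mathbf{v}_k$, hence $\mathbf{v}_j \in [\mathbf{v}_1, \mathbf{v}_k]$; conversely, by Lemma \ref{minero}, every $\mathbf{v}_j \in [\mathbf{v}_1, \mathbf{v}_k] \subseteq \mathcal{V}_{\mathcal{F}}$ satisfies $\sigma(\mathbf{v}_j) \leq \sigma(\mathbf{v}_k) < 0$, so $\mathbf{v}_j \in N$. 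Thus $N = [\mathbf{v}_1, \mathbf{v}_k]$, which is exactly condition (a); and (b) follows at once, since a vertex $\mathbf{v}_j \in \mathcal{V}_{\mathcal{F}} \setminus [\mathbf{v}_1, \mathbf{v}_k] = \mathcal{V}_{\mathcal{F}} \setminus N$ has $\sigma(\mathbf{v}_j) \geq 0$ by the very definition of $N$.

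It remains to prove that $N$ is totally ordered, which is the crux of the argument. Suppose not, and pick incomparable $\mathbf{v}_{j_1}, \mathbf{v}_{j_2} \in N$; let $\mathbf{v}_p$ be the vertex where the paths $[\mathbf{v}_1, \mathbf{v}_{j_1}]$ and $[\mathbf{v}_1, \mathbf{v}_{j_2}]$ split, i.e.\ the largest vertex with $\mathbf{v}_p \leq \mathbf{v}_{j_1}$ and $\mathbf{v}_p \leq \mathbf{v}_{j_2}$. By connectedness of $\mathcal{V}_{\mathcal{F}}$ we have $\mathbf{v}_p \in \mathcal{V}_{\mathcal{F}}$; by Lemma \ref{minero}, $\sigma(\mathbf{v}_p) \leq \sigma(\mathbf{v}_{j_1}) < 0$; and $\mathbf{v}_p$ carries two distinct edges heading towards $\mathbf{v}_{j_1}$ and $\mathbf{v}_{j_2}$ (otherwise the split would occur later), so — counting also the edge towards $\mathbf{v}_1$, or the extra unit in the adapted degree when $\mathbf{v}_p = \mathbf{v}_1$ — its adapted degree is at least $3$, hence $\mathbf{v}_p \in \mathcal{V}$. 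Applying the preceding lemma with $\mathbf{v}_p$ in the role of $\mathbf{v}_k$, both $\mathbf{v}_{j_1}$ and $\mathbf{v}_{j_2}$ lie in $D = \{\mathbf{v}_j \in \mathcal{V}_{\mathcal{F}} \setminus \{\mathbf{v}_p\} \mid \mathbf{v}_j \geq \mathbf{v}_p,\ \sigma(\mathbf{v}_j) < 0\}$ (they are strictly above $\mathbf{v}_p$, being incomparable to one another); and since in a rooted tree two vertices lying below a common vertex are comparable, $\mathbf{v}_{j_1}$ and $\mathbf{v}_{j_2}$ are dominated by distinct maximal elements of $D$, contradicting the preceding lemma. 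Hence $N$ is a chain, which completes the proof. The main obstacle is precisely this last step: locating the branching vertex $\mathbf{v}_p$, verifying that it belongs to $\mathcal{V}$ (which is exactly what makes the preceding lemma applicable), and then combining that lemma with the monotonicity of $\sigma$ from Lemma \ref{minero}; the connectedness of $\mathcal{V}_{\mathcal{F}}$ is the only ingredient used that is not already stated explicitly in the excerpt.
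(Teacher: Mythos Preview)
Your proof is correct and follows essentially the same approach as the paper: the paper's own proof is a single sentence stating that the result follows from the two preceding lemmas together with $\sigma(\mathbf{v}_1)<0$, and your argument is precisely a careful unpacking of that sentence. The one extra ingredient you make explicit---that the branching vertex $\mathbf{v}_p$ lies in $\mathcal{V}$ via the adapted-degree characterization, so that the preceding lemma applies---is implicit in the paper's proof and handled correctly in yours.
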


\begin{proof}

The result follows from the two preceding lemmas and the fact that $\sigma({\bf v}_1)<0$ because ${\bf v}_1^{<}=\emptyset$.

\end{proof}


Now we are ready to prove Part (1) of Theorem \ref{gordo2}. To do it, we need to define the concept of consecutive vertices in ${\mathcal V} \cup {\mathcal V}_{\mathrm{end}}$.

\begin{de}
{\rm
We will say that two vertices $\bv_{k_1}, \bv_{k_2}\in {\mathcal V}\cup {\mathcal V}_{\mathrm{end}}$ are \emph{consecutive} if  $\bv_{k_1}<\bv_{k_2}$ and $]{\bf v}_{k_1},{\bf v}_{k_{2}}[\cap {\mathcal V}=\emptyset$.
}
\end{de}

Consider the vertex  ${\bf v}_k\in {\mathcal V}_{\mathcal F}$ given by Lemma \ref{alonso}. If $\bv_1\not\in {\mathcal V}$ then the adapted degree of $\bv_1$ is 2 and, therefore, there is no arrow labeling it; this implies that $\sigma(\bv_2)=\sigma({\bv}_1)<0$ and, then, $\bv_k\not=\bv_1$.

Also, we claim that $\bv_k$ cannot be a vertex in ${\mathcal V}_{\mathrm{end}}\setminus {\mathcal V}$. Indeed, reasoning by contradiction, assume that $\bv_k\in {\mathcal V}_{\mathrm{end}}\setminus {\mathcal V}$. Then $\bv_k$ is a vertex whose adapted degree is, at most, 2. This implies that $\bv_k$ has, at most, one arrow ${\bf a}_i$ as a label (recall that $\bv_k\neq\bv_1$); moreover,  $C_i$ must be a nonsingular curve since $P_k$ is a free point in ${\mathcal F}$ and, therefore, $\betabarra_0^i=1$. Hence $\sigma(\bv_k)\geq \sum_{{\bf a}_s\in \bv_k^{<}} c_{ks}\betabarra_0^s-1\geq 0$ and this gives the desired contradiction.

So, \emph{if $\bv_k$ belongs to ${\mathcal V}_{\mathrm{end}}$ then it must belong to $\mathcal V$.}

Finally, we are going to show that \emph{$\bv_k$ belongs to $\mathcal V$}, which proves (1) of Theorem \ref{gordo2} because $\bv_k$ satisfies the conditions (a) and (b) given in the theorem's statement. Indeed, reasoning again by contradiction assume that $\bv_k\not\in {\mathcal V}$. Then there exist two consecutive vertices, $\bv_{j_1},\bv_{j_2}$ in ${\mathcal V}\cup {\mathcal V}_{\mathrm{end}}$, such that $\bv_{k}\in ]\bv_{j_1},\bv_{j_2}[$ (notice that, by the above proved claim, $\bv_k\not\in {\mathcal V}_{\mathrm{end}}$). Thus $\sigma(\bv_{k})=\sigma(\bv_{j_2})$ because $\sigma$ is constant along the path $]\bv_{j_1},\bv_{j_2}]$ (see Remark \ref{nota}) which contradicts Condition (b) of Lemma \ref{alonso}.\vspace{2mm}

Now, we are going to prove (2) of Theorem \ref{gordo2}. The following lemma will be of importance for our proof.

\begin{lem}\label{lemamadre}

Let $\bv_{k_1}, \bv_{k_2}$ be two consecutive vertices of ${\mathcal V}\cup {\mathcal V}_{\mathrm{end}}$. It holds that:
\begin{itemize}
\item[(a)] If $\sigma(\bv_{k_2})\leq 0$,  then $\overline{\alpha}_j\geq \overline{\alpha}_{k_2}$ for all $j$ such that $\bv_j\in [{\bf v}_{k_1},{\bf v}_{k_{2}}]$.
\item[(b)] If $\sigma(\bv_{k_2})\geq 0$,  then $\overline{\alpha}_j\geq \overline{\alpha}_{k_1}$ for all $j$ such that $\bv_j\in [{\bf v}_{k_1},{\bf v}_{k_{2}}]$.

\end{itemize}

\end{lem}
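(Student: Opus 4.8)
The idea is to analyze the path $[\bv_{k_1},\bv_{k_2}]$ vertex by vertex, comparing consecutive candidates $\overline\alpha_j$ and $\overline\alpha_{j+1}$ for adjacent vertices $\bv_j, \bv_{j+1}$ on this path. By Lemma \ref{bombero}, for any vertex $\bv_j$ on the path with $P_j\in\mathcal F$ we have $\overline\alpha_j=\frac{\betabarra_0^{\varphi_j}+\betabarra_1^{\varphi_j}}{\sum_{i=1}^r I(\varphi_j,f_i)}$, where $\varphi_j$ is a curvette at $P_j$; and on the open segment $]\bv_{k_1},\bv_{k_2}[$ all the points $P_j$ are free (since $]\bv_{k_1},\bv_{k_2}[\cap\mathcal V=\emptyset$, so no terminal satellite and no initial separating points occur strictly inside), so these $\varphi_j$ define nonsingular curves with $\betabarra_0^{\varphi_j}=1$. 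First I would set up, for a step from $\bv_j$ to its successor $\bv_{j+1}$ on the path (taking $j+1$ to mean the next vertex along $[\bv_{k_1},\bv_{k_2}]$, not literally the blow-up index), the comparison of the two fractions via cross-multiplication, expressing the numerators and denominators of $\overline\alpha_j$ and $\overline\alpha_{j+1}$ through the intersection numbers $I(\varphi_j,f_i)$ using parts (a)--(d) of Lemma \ref{torero} according to which of the sets $J_1,J_2,J_3,J_{4,1},J_{4,2}$ (or their singular analogues) the index $i$ falls in, as catalogued by Lemma \ref{ttt}.

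**Key steps.** The decisive observation is that as we move from $\bv_j$ to $\bv_{j+1}$ along the path, the curvette's self-related quantity grows by a controlled amount: the numerator $\betabarra_0^{\varphi_j}+\betabarra_1^{\varphi_j}$ increases (by the number of new free points passed, i.e. by $1$ at a free step in the nonsingular stretch, using the description of $\betabarra_1$ for nonsingular curvettes as the count of points through which the strict transform passes), while the denominator $b_j=\sum_i I(\varphi_j,f_i)$ changes by adding, for each branch $f_i$ whose strict transform still passes through $P_{j+1}$, the multiplicity $\betabarra_0^i$ (Noether's formula), and leaving the others fixed. Concretely I would show: for $\bv_j,\bv_{j+1}$ consecutive on the path, $\overline\alpha_{j+1}-\overline\alpha_j$ has the same sign as $\sigma$ evaluated appropriately — more precisely, that the difference $b_{j}(a_{j+1}+1)-b_{j+1}(a_j+1)$, after substituting the increments just described, equals (up to a positive factor) $-\sigma(\bv_{j+1})$ or a sum of $\sigma$-values along the way. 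This is exactly the point of the weight function $\sigma$: by Remark \ref{nota}, $\sigma$ is constant on $]\bv_{k_1},\bv_{k_2}]$, equal to $\sigma(\bv_{k_2})$. Hence if $\sigma(\bv_{k_2})\le0$ the sequence $\overline\alpha_{k_1},\overline\alpha_{j},\ldots$ is nonincreasing (so the minimum on the segment is attained at $\bv_{k_2}$, giving (a)), and if $\sigma(\bv_{k_2})\ge0$ it is nondecreasing along the segment starting from $\bv_{k_1}$ (giving (b)). I would handle the two boundary vertices $\bv_{k_1}$ (possibly satellite, in $\mathcal V$ or $\mathcal V_{\mathrm{end}}$) and $\bv_{k_2}$ (satellite if in $\mathcal V_{\mathcal T}$, free if in $\mathcal V_{\mathcal S}$) as special cases, using Lemma \ref{banderillero} for the terminal-satellite candidate values at the endpoints, and the explicit intersection formulas from Lemma \ref{torero}(b),(c),(d) for the contribution of a curvette across a satellite vertex.

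**The main obstacle.** The heart of the difficulty is the bookkeeping of how $b_j=\sum_i I(\varphi_j,f_i)$ varies along the path and relating its increment to $\sigma$: one must partition $\{1,\dots,r\}$ according to whether $\bv_j\le\mathbf a_i$ or $\bv_j\nleq\mathbf a_i$ (the sets $\bv_j^\ge$ and $\bv_j^<$) and then, invoking Lemma \ref{ttt} and the $J_\ell$-classification, track which branches contribute a growing term $\betabarra_0^i$ at each step and which contribute a constant $\betabarra_1^i$ or $\betabarra_1^i\betabarra_0^{\varphi}/\betabarra_0^i$-type term; the subtle case is the distinction encoded by the coefficient $c_{ji}$ in (\ref{sigma}), i.e. whether $\mathrm{ter}([\bv_1,\mathbf a_i]\cap[\bv_1,\bv_j])\in\mathcal S$, which (by Remark \ref{nota2}) is precisely the condition that the curvette and $C_i$ are freely separated, and which governs whether $I(\varphi_j,f_i)=c\,\betabarra_0^i\betabarra_0^{\varphi}$ grows linearly with $c$ or saturates. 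Getting the cross-multiplied inequality to collapse exactly to the sign of $\sigma(\bv_{k_2})$ — rather than to something merely implied by it — is where the careful choice of partition of the branch set (advertised in the introduction as the main technical device in the proof of this lemma) will be needed, and I expect this to consume most of the proof.
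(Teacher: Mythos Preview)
Your overall strategy---reduce the comparison of $\overline\alpha_j$ with the endpoint values to an inequality whose sign is governed by $\sigma(\bv_{k_2})$---is exactly right and is what the paper does. However, there is a genuine gap in your argument: the claim that every interior point $P_j$ with $\bv_j\in\,]\bv_{k_1},\bv_{k_2}[$ is free is false. The set $\mathcal V_{\mathcal T}$ records only the \emph{minimum} terminal satellite point of each branch, not all satellite points, so between two consecutive vertices of $\mathcal V\cup\mathcal V_{\mathrm{end}}$ there can be several satellite points (and hence singular curvettes). Already in the running example, $\bv_7,\bv_8\in\mathcal V_{\mathcal T}$ are consecutive and the only interior vertex $\bv_5$ corresponds to a satellite point; likewise the segment $[\bv_2,\bv_7]$ contains the satellite vertex $\bv_6$. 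Your formulas for the numerator increment (``$+1$ at a free step'') and your description of the denominator increment via Noether's formula both rest on this nonsingularity assumption and collapse at those satellite vertices. The ``special cases'' you allot to the endpoints do not cover this.

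The paper avoids the stepwise comparison altogether: it fixes an arbitrary $\bv_j\in[\bv_{k_1},\bv_{k_2}]$ and compares $\overline\alpha_j$ \emph{directly} with $\overline\alpha_{k_2}$ (for part (a)) or $\overline\alpha_{k_1}$ (for part (b)), splitting into four cases according to whether each of $\bv_{k_1},\bv_{k_2}$ lies in $\mathcal V_{\mathcal S}\cup\mathcal V_{\mathrm{end}}$ or in $\mathcal V_{\mathcal T}$. In each case one writes $\overline\alpha_{k_l}$ and (a bound for) $\overline\alpha_j$ using the partitions $J_1,\ldots,J_4$ of the relevant endpoint curvette $\psi_l$, via Lemmas \ref{torero} and \ref{bombero}; the cross-multiplied difference then factors as a product of two terms, one of which is exactly $\sigma(\bv_{k_2})$ (Lemma \ref{ttt}) and the other has a definite sign coming from $\betabarra_1^\varphi/\betabarra_0^\varphi$ versus $\betabarra_1^{\psi_l}/\betabarra_0^{\psi_l}$ or $d_{k_l}$. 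This direct comparison handles the interior satellite points automatically, since $\varphi$ is allowed to be singular throughout. If you want to rescue the stepwise idea you would have to redo the increment analysis for adjacent \emph{dual-graph} vertices where one or both underlying points are satellite, which is considerably more bookkeeping than the four-case direct comparison.
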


\begin{proof}

Let $\psi_1$ (respectively, $\psi_2$) be a curvette at $P_{k_1}$ (respectively, $P_{k_2}$).

If $l\in \{1,2\}$ and $\bv_{k_l}\in {\mathcal V}_{\mathcal S}\cup {\mathcal V}_{\mathrm{end}}$  then, by Lemma \ref{anthrax}, the non-empty sets in the family  $$\left\{J_1(\psi_l), J_{4,1}(\psi_l), J_{4,2}(\psi_l)\right\}$$ form a partition of $\mathbb{J}_r$. Taking into account Lemmas \ref{torero} and \ref{bombero}, it holds that
\begin{equation}\label{free}
\overline{\alpha}_{k_l}=\frac{d_{k_l}+1}{\sum_{s\in J_{4,1}(\psi_l)}c_{k_l s} \betabarra_0^s+\sum_{s\in J_1(\psi_l)} \betabarra_1^s+d_{k_l}\sum_{s\in J_{4,2}(\psi_l)} \betabarra_0^s}
\end{equation}
where, for each $P_e\in {\mathcal F}$, $d_e$ denotes the cardinality of the set $[\bv_1,\bv_l]\cap {\mathcal V}_{\mathrm{free}}$.

If, on the contrary, $\bv_{k_l}=\bv_{t_i}\in {\mathcal V}_{\mathcal T}$, then applying Lemmas \ref{torero} and \ref{bombero} again, we have that
\begin{equation}\label{satel}{\overline \alpha}_{k_l}={\overline \alpha}_{t_i}=\frac{\betabarra_0^{\psi_l}+\betabarra_1^{\psi_l}}{\sum_{s\in J_1(\psi_l)} \betabarra^{\psi_l}_0 \betabarra_1^s+ \sum_{s\in J_2(\psi_l)\cup
J_3(\psi_l)}
\betabarra_0^s\betabarra^{\psi_l}_1 + \sum_{s\in J_4(\psi_l)}
c_{k_{l} s}\betabarra_0^{\psi_l}\betabarra_0^s}.\end{equation}

We will distinguish four cases:\vspace{2mm}

\noindent \emph{Case 1}: $\bv_{k_1}, \bv_{k_2}\in {\mathcal V}_{\mathcal S}\cup {\mathcal V}_{\mathrm{end}}$. Let us consider a vertex $\bv_j\in [\bv_{k_1},\bv_{k_2}]$ and let $\varphi$ be a curvette at $P_j$. Clearly, $P_j$ is free and $\varphi$ is smooth. Moreover $I(\varphi,f_s)\leq I(\psi_2,f_s)$ for all $s\in \mathbb{J}_r$ and, by Lemma \ref{torero}, $I(\varphi,f_s)=d_j\betabarra_0^s$ whenever $s\in J_{4,2}(\psi_2)$. Therefore
$$\overline{\alpha}_j\geq \overline{\alpha}'_j:=\frac{d_{j}+1}{\sum_{s\in J_{4,1}(\psi_2)}c_{k_2 s} \betabarra_0^s+\sum_{s\in J_1(\psi_2)} \betabarra_1^s+d_{j}\sum_{s\in J_{4,2}(\psi_2)} \betabarra_0^s}.$$

Using (\ref{free}), straightforward computations show that the inequality $\overline{\alpha}'_j\geq \overline{\alpha}_{k_2}$ holds if and only if
$$\left(d_j-d_{k_2}\right)\left(\sum_{s\in J_{4,1}(\psi_2)} c_{k_2 s}\betabarra_0^s+ \sum_{s\in J_{1}(\psi_2)} \betabarra_1^s-\sum_{s\in J_{4,2}(\psi_2)} \betabarra_0^s\right)\geq 0$$
and, by Lemma \ref{ttt}, this inequality can be written in the following form:
$(d_j-d_{k_2})\sigma(\bv_{k_2})\geq 0$. Since $d_j\leq d_{k_2}$, we have that $\overline{\alpha}_j\geq \overline{\alpha}_{k_2}$ if $\sigma(\bv_{k_2})\leq 0$. So, we have proved Part (a) in Case 1.

To prove (b), let us consider a vertex $\bv_j$ and a curvette $\varphi$ as above. The facts $J_{4,2}(\psi_2)\subseteq J_{4,2}(\psi_1)$ and  $c_{k_1 s}=d_{k_1}$ for every $s\in \Delta:=J_{4,2}(\psi_1)\setminus J_{4,2}(\psi_2)$ allow us to write the equality (\ref{free}) as
\begin{equation}\label{free2}
\overline{\alpha}_{k_1}=\frac{d_{k_1}+1}{\sum_{s\in J_{4,1}(\psi_1)\cup \Delta}c_{k_1 s} \betabarra_0^s+\sum_{s\in J_1(\psi_1)} \betabarra_1^s+d_{k_1}\sum_{s\in J_{4,2}(\psi_2)} \betabarra_0^s}.
\end{equation}

On the other hand, from Lemma \ref{torero} one can deduce the following statements:

-- If $s\in J_{4,1}(\psi_1)$ then $I(\varphi,f_s)=c_{k_1 s}
\betabarra_0^s$, because $J_{4,1}(\psi_1)\subseteq J_{4,1}(\varphi)$ and $c_{j s}=c_{k_1 s}$.

-- If $s\in J_{1}(\psi_1)$ then $I(\varphi,f_s)=\betabarra_1^s$, because $J_1(\psi_1)\subseteq J_1(\varphi)$.

-- If $s\in J_{4,2}(\psi_2)$ then $I(\varphi,f_s)= d_j \betabarra_0^s$, because $J_{4,2}(\psi_2)\subseteq J_{4,2}(\varphi)$.

-- If $s\in \Delta$ then $I(\varphi,f_s)=d_{k_1}
\betabarra_0^s=c_{k_1 s} \betabarra_0^s$, because $\Delta\subseteq J_{4,1}(\varphi)$ and $c_{j s}=c_{k_1 s}=d_{k_1}$. \vspace{3mm}

Then, as a consequence of the previous statements and the fact that the non-empty sets in the family $\{ J_{1}(\psi_1),  J_{4,1}(\psi_1), J_{4,2}(\psi_2), \Delta \}$ are a partition of $\mathbb{J}_r$, it happens

$$\overline{\alpha}_j=\frac{d_{j}+1}{\sum_{s\in J_{4,1}(\psi_1)\cup \Delta}c_{k_1 s} \betabarra_0^s+\sum_{s\in J_1(\psi_1)} \betabarra_1^s+d_{j}\sum_{s\in J_{4,2}(\psi_2)} \betabarra_0^s}.$$
From this equality and that in (\ref{free2}), one can deduce that $\overline{\alpha}_j\geq \overline{\alpha}_{k_1}$ if and only if
$$\left(d_j-d_{k_1}\right)\left(\sum_{s\in J_{4,1}(\psi_1)\cup \Delta}c_{k_1 s} \betabarra_0^s+\sum_{s\in J_1(\psi_1)} \betabarra_1^s-\sum_{s\in J_{4,2}(\psi_2)} \betabarra_0^s\right)\geq 0.$$
Now, it is clear that $\bv_{k_2}^{<}={\bf v}_{k_1}^{<}\cup ({\bf v}_{k_1}^{\geq }\setminus {\bf v}_{k_2}^{\geq })$ and, using Lemma \ref{ttt}, one gets that
$$\bv_{k_2}^{<}=\{{\bf a}_s\mid s\in J_{4,1}(\psi_1)\cup \Delta\cup J_1(\psi_1)\}.$$
Moreover $c_{k_2 s}=c_{k_1 s}$ for all $s\in J_{4,1}(\psi_1)\cup \Delta$.
Thus the above inequality can be written as $(d_j-d_{k_1})\sigma(\bv_{k_2})\geq 0$ and hence Part (b) in Case 1 holds because $d_j\geq d_{k_1}$.\vspace{4mm}

\noindent \emph{Case 2: }$\bv_{k_1},\bv_{k_2}\in {\mathcal V}_{\mathcal T}$. Let us consider a vertex $\bv_j\in [\bv_ {k_1},\bv_{k_2}]$ and a curvette $\varphi$ at $P_j$. By Lemma \ref{torero}, it follows that $I(\varphi, f_s)\leq \min\{\betabarra_1^{\varphi}\betabarra_0^s, \betabarra_1^{s}\betabarra_0^{\varphi}\}$ for every $s\in \mathbb{J}_r$ and, moreover, $I(\varphi,f_s)=c_{k_{2} s} \betabarra_0^{\varphi}\betabarra_0^s$ for every $s\in J_4(\psi_2)$ (because the strict transforms of the curves  $C_s$ and that defined by $\varphi$ pass through the same free points). Therefore
$$\overline{\alpha}_j\geq \overline{\alpha}"_{j}:=\frac{\betabarra_0^{\varphi}+\betabarra_1^{\varphi}}{\sum_{s\in J_1(\psi_2)} \betabarra^{\varphi}_0 \betabarra_1^s+ \sum_{s\in J_2(\psi_2)\cup
J_3(\psi_2)}
\betabarra_0^s\betabarra^{\varphi}_1 + \sum_{s\in J_4(\psi_2)}
c_{k_{2} s}\betabarra_0^{\varphi}\betabarra_0^s}.$$
Taking into account (\ref{satel}), one can deduce that the inequality $\overline{\alpha}"_j\geq \overline{\alpha}_{k_2}$ holds if and only if
$$(\betabarra_1^{\varphi}\betabarra_0^{\psi_2}-
\betabarra_0^{\varphi}\betabarra_1^{\psi_2})\left(\sum_{s\in {J_1(\psi_2)}}
\betabarra_1^s- \sum_{s\in {J_2(\psi_2)}\cup J_3({\psi_2})} \betabarra_0^s+
\sum_{s\in J_4({\psi_2})} c_{k_2s}\betabarra_0^s \right)\geq 0.
$$
Finally, $\bv_j\leq \bv_{k_2}$ implies $\betabarra_1^{\varphi}/\betabarra_0^{\varphi}\leq \betabarra_1^{\psi_2}/\betabarra_0^{\psi_2}$ (see the proof of Lemma \ref{ttt}) and since, by Lemma \ref{ttt}, the second factor of the left-hand side of the above inequality is $\sigma(\bv_{k_2})$, we have proved (a) in Case 2 because $\alpha_j\geq \alpha_{k_2}$ whenever $\sigma(\bv_{k_2})\leq 0$.

To prove Part (b) in this case, one only need to make an analogous reasoning. In fact, one has to use the expression of $\overline{\alpha}_{k_1}$ in (\ref{satel}) instead of $\overline{\alpha}_{k_2}$  and the facts that $J_4(\psi_1)=J_4(\psi_2)$ and $c_{k_1 s}=c_{k_2 s}$ for all $s\in J_4(\psi_1)$. \vspace{4mm}

\noindent \emph{Case 3: } $\bv_{k_1}\in {\mathcal V}_{\mathcal S}\cup {\mathcal V}_{\mathrm{end}}$ and $\bv_{k_2}\in {\mathcal V}_{\mathcal T}$. Part (a) can be proved as in the previous case. To prove (b), keep the same notations and observe that  $J_1(\psi_2)=\emptyset$. Now, $\psi_1$ is nonsingular (by Lemma \ref{anthrax}) and
$${\overline \alpha}_{\psi_1}=\frac{d_{k_1}+1}{d_{k_1} \sum_{s\in J_2(\psi_2)
\cup J_3(\psi_2)} \betabarra_0^s +  \sum_{s\in J_4(\psi_2)} c_{k_1 s}
\betabarra_0^s },$$
by Lemma \ref{torero}. Moreover
$${\overline \alpha}_{j}\geq \overline{\alpha}^{(3)}_{j}:=
\frac{\betabarra_0^{\varphi}+\betabarra_1^{\varphi}}{\sum_{s\in
J_2(\psi_2)\cup J_3(\psi_2)}
 \betabarra_0^s\betabarra^{\varphi}_1 + \sum_{s\in J_4(\psi_2)}
  c_{k_1 s}\betabarra_0^{\varphi}\betabarra_0^s }.$$
As a consequence
$\overline{\alpha}_{j}^{(3)}\geq {\overline \alpha}_{\psi_1}$ if, and only if,
$$(\betabarra_1^{\varphi}-d_{k_1}\betabarra_0^{\varphi})\left(-\sum_{s\in
J_2(\psi_2)\cup J_3(\psi_2)} \betabarra_0^s+ \sum_{s\in J_4(\psi_2)}
c_{k_1 s}\betabarra_0^s\right)\geq 0.$$
And this inequality concludes the proof of (b) in this case after bearing in mind the following facts: $c_{k_1 s}=c_{k_2 s}$ for all $s\in J_4(\psi_2)$, the second factor of the left-hand side of the inequality is $\sigma(\bv_{k_2})$ by Lemma \ref{ttt} and $d_{k_1}\leq
\betabarra_1^{\varphi}/\betabarra_0^{\varphi}$ because the strict transforms of
$\varphi$ pass through, at least, $d_{k_1}$ free points of $\mathcal C$.
\vspace{4mm}

\noindent \emph{Case 4: } $\bv_{k_1}\in {\mathcal V}_{\mathcal T}$ and $\bv_{k_2}\in {\mathcal V}_{\mathcal S}\cup {\mathcal V}_{\mathrm{end}}$. Reasoning in a similar way as we made for Part (b) of Case 1, the equality
$$\overline{\alpha}_j=\frac{\betabarra_0^{\varphi}+\betabarra_1^{\varphi}}{\sum_{s\in J_{4,1}(\psi_2)}c_{k_2 s} \betabarra_0^{\varphi}\betabarra_0^s+\sum_{s\in J_1(\psi_2)} \betabarra_0^{\varphi}\betabarra_1^s+\sum_{s\in J_{4,2}(\psi_2)} \betabarra_1^{\varphi}\betabarra_0^s}
$$
is a consequence of the following three statements: \vspace{3mm}

-- If $s\in J_{4,1}(\psi_2)$ then $I(\varphi,f_s)=c_{k_2 s}\betabarra_0^{\varphi} \betabarra_0^s$, because $J_{4,1}(\psi_2)\subseteq J_{4}(\varphi)$ and $c_{j s}=c_{k_2 s}$.

-- If $s\in J_{1}(\psi_2)$ then $I(\varphi,f_s)=\betabarra_0^{\varphi} \betabarra_1^s$, because $J_1(\psi_2)\subseteq J_1(\varphi) \cup J_3(\varphi)$.

-- If $s\in J_{4,2}(\psi_2)$ then $I(\varphi,f_s)=\betabarra_1^{\varphi} \betabarra_0^s$. \vspace{3mm}

Thus using Equality (\ref{free}) and Lemma \ref{ttt}, we show that  $\overline{\alpha}_j\geq \overline{\alpha}_{k_2}$ if and only if $(\betabarra_1^{\varphi}-d_{k_2} \betabarra_0^{\varphi})\sigma(\bv_{k_2})\geq 0$, which proves (a) in this case because $d_{k_2} \geq \betabarra_1^\varphi/\betabarra_0^{\varphi}$.

Finally,  the  proof of (b) is analogous to that of Case 2 and our lemma is proved.

\end{proof}

\begin{exa}
{\rm
As a complement of the above proof, we check some of the results there used for the curve given in Example \ref{example1}. For a start, ${\mathcal V}={\mathcal V}_{\mathcal S}\cup {\mathcal V}_{\mathcal T}$, where ${\mathcal V}_{\mathcal S} = \{{\bf v}_2,{\bf v}_4, {\bf v}_{15}\}$ and ${\mathcal V}_{\mathcal T} = \{{\bf v}_7,{\bf v}_8, {\bf v}_{11}, {\bf v}_{13}, {\bf v}_{17}\}$.

Now consider the vertices ${\bf v}_{k_1} = {\bf v}_{4}$ and ${\bf v}_{k_2}= {\bf v}_{15}$, which are consecutive and both in ${\mathcal V}_{\mathcal S}$. Here $\psi_1 = \varphi_4$ and $\psi_2 = \varphi_{15}$.  $J_1(\psi_1)=\{1,2\}$, $J_{4,1}(\psi_1)=\{5,8\}$, $J_{4,2}(\psi_1)=\{3,4,6,7\}$ and $J_1(\psi_2)=\{1,2\}$, $J_{4,1}(\psi_2)=\{3,4,5,8\}$ and $J_{4,2}(\psi_2)=\{6,7\}$. As we have  said, both sets are a partition of $\mathbb{J}_8$. These vertices correspond to Part (b) in Case 1. Apart from $\psi_1$ and $\psi_2$, our lemma would also consider $\varphi = \varphi_{14}$ whose sets $J_i$ are $J_1(\varphi)=\{1,2\}$, $J_{4,1}(\varphi)=\{3,4,5,8\}$ and $J_{4,2}(\varphi)=\{6,7\}$. Moreover, $\Delta = J_{4,2}(\psi_1) \setminus J_{4,2}(\psi_2) = \{3,4\}$ and, as we have said, $\{J_{4,1}(\psi_1), J_{1}(\psi_1), J_{4,2}(\psi_2), \Delta\}$ is a partition of $\mathbb{J}_8$. Moreover, $ J_{4,1}(\psi_1) \subseteq J_{4,1}(\varphi)$, $ J_{1}(\psi_1) \subseteq J_{1}(\varphi)$, $ J_{4,2}(\psi_2) \subseteq J_{4,2}(\varphi)$ and $ \Delta \subseteq J_{4,1}(\varphi)$, as we stated.

To finish, we consider a situation corresponding to Case 4. It is ${\bf v}_{k_1} = {\bf v}_{8}$ and ${\bf v}_{k_2} = {\bf v}_{4}$. In this case, $\varphi = \varphi_8$ and $J_1(\varphi_8)=\{1\}, J_2(\varphi_8)=\{3,4,6,7\}, J_3(\varphi_8)=\{2\}, J_{4}(\varphi_8)=\{5,8\}$ and the statements given in Case 4 of the previous lemma for $\psi_2 = \varphi_4$ and $\varphi = \varphi_8$ hold.

}
\end{exa}

Next we are going to prove Statement (2) in Theorem \ref{gordo2}, which will finish the proof of this result and our paper. Let $P_j$ be a point in ${\mathcal F}$ and assume $\bv_k< \bv_j$. Then, Lemma \ref{lemamadre} proves that $\overline{\alpha}_j\geq \overline{\alpha}_k$ because the path $[\bv_k, \bv_j]$ is contained in a union of paths of the form $[\bv_{j_1}, \bv_{j_2}]$, $\bv_{j_1}$ and $\bv_{j_2}$ being two consecutive vertices in ${\mathcal V}\cup {\mathcal V}_{\mathrm{end}}$. Otherwise, a similar reasoning with the path $[\bv_j,\bv_k]$ also shows that $\overline{\alpha}_j\geq \overline{\alpha}_k$ and, by Proposition \ref{kuwatilla}, the log-canonical threshold of $C$ is $\overline{\alpha}_k$.

It only remains to obtain the expression of
${\overline \alpha}_k$ when $P_k\in {\mathcal S}$ because otherwise Lemma \ref{banderillero} provides that expression. Let  $C_{i_1}$ and $C_{i_2}$ be  as in the statement and denote by $\varphi$ a curvette at
$P_k$, then  the following chain of equalities finishes the proof:
$$\overline{\alpha}_k=\frac{c_{i_1i_2}+1}{\sum_{s=1}^r I(\varphi,f_s)}=
\frac{c_{i_1i_2}+1}{c_{i_1i_2} \betabarra_0^{i_1}+\sum_{i_1 \neq
s=1}^r c_{i_1i_2}
\betabarra_0^s}=\frac{\betabarra_0^{i_1}\betabarra_0^{i_2}+
I(f_{i_1},f_{i_2})}{\betabarra_0^{i_1}I(f_{i_1},f_{i_2})+\betabarra_0^{i_2}
\sum_{i_1 \neq s=1}^r I(f_{i_1},f_{s})}.$$ Notice that the second equality is a consequence of Lemma \ref{torero} and the last one
has been obtained by multiplying numerator and denominator by
$\betabarra_0^{i_1}\betabarra_0^{i_2}$.


\begin{thebibliography}{99}
\footnotesize \setlength{\baselineskip}{3mm}

\bibitem{nai} M. Aprodu, D. Naie, Enriques diagrams and
log-canonical thresholds of curves on smooth surfaces.
{\it Geom. Dedicata} \textbf{146} (2010), 43-66.

\bibitem{art} E. Artal Bartolo, P. Cassou-Nogu\'es, I. Luengo,
A. Melle-Hern\'{a}ndez, On the log-canonical threshold for germs of
plane curves. {\it Contemp. Math.} \textbf{474} (2008), 1-14.

\bibitem{ati-I} M. F. Atiyah, Resolution of singularities and division of
distributions. {\it Comm. Pure Appl. Math.} \textbf{23} (1970),
145-150.

\bibitem{bir-I} C. Birkar, Ascending chain condition for log-canonical
thresholds and termination of log flips. {\it Duke Math. J.}
\textbf{136} (2007), 173-180.

\bibitem{bchm-I} C. Birkar, P. Cascini, C. Hacon, J. McKernan.
Existence of minimal models for varieties of log general type.
{\it J. Amer. Math. Soc.} \textbf{23} (2010), 405-468.

\bibitem{Brieskorn} E. Brieskorn, H. Knorrer, {\it Plane Algebraic Curves},
Birkh\"auser, Basel, 1986.

\bibitem{18-B} N. Budur, On Hodge spectrum and multiplier ideals.
{\it Math. Ann.} \textbf{327} (2003), 257-270.

\bibitem{b-g-g} N. Budur, P. D. Gonz\'alez-P\'erez, M. Gonz\'alez-Villa,
Log-canonical thresholds of quasi-ordinary hypersurface
singularities. ArXiv:1105.2794v1.

\bibitem{20-B}  N. Budur, M. Musta\c{t}\v{a}, M. Saito, Bernstein-Sato
polynomials of arbitrary varieties. {\it Compos. Math.}
\textbf{142} (2006), 779-797.


\bibitem{Campillo} A. Campillo, {\it Algebroid curves in positive characteristic},
 Lect. Notes Math. 613, Springer-Verlag, 1980.

\bibitem{Felix} A. Campillo, F. Delgado, S. M. Gusein-Zade,
On generators of the semigroup of a plane curve singularity.
\emph{J. London Math. Soc.} {\bf 60} (1999), 420-430.

\bibitem{2-nai} A. Campillo, G. Gonz\'alez-Sprinberg, M.
Lejeune-Jalabert, Clusters of infinitely near points. {\it Math.
Ann.} {\bf 306} (1996), 169-194.

\bibitem{Delgado} F. Delgado, The semigroup of values of a curve singularity
with several branches. \emph{Manuscripta Math.} {\bf 59} (1987),
347-374.

\bibitem{Delgado2} F. Delgado, An arithmetical factorization for the critical point set of some maps from $\mathbb{C}^2$ to $\mathbb{C}^2$, \emph{Singularities - Lille 1991,} London Mathematical Society Lecture Note Series {\bf 201} (ed. Jean-Paul Brasselet), Cambridge University Press, 1994), 61-100.


\bibitem{6-adv} L. Ein, R. Lazarsfeld, K. E. Smith, D. Varolin,
Jumping coefficients of multiplier ideals. {\it Duke Math. J.}
\textbf{123} (2004), 469-506.

\bibitem{FJ} C. Favre, M. Jonsson, Valuations and multiplier ideals. {\it J. Amer. Math. Soc.}
\textbf{18} (2005), 655-684.

\bibitem{gal-mon} C. Galindo, F. Monserrat, The Poincaré series of multiplier ideals of a simple
complete ideal in a local ring of a smooth surface. {\it Adv. Math.} \textbf{225} (2010),
1046-1068.

\bibitem{66-B} L. H. Halle, J. Nicaise, Motivic zeta functions of abelian
varieties and the monodromy conjecture. ArXiv:0902.3755v3.

\bibitem{68-B} N. Hara, K.-I. Yoshida,  A generalization of tight
closure and multiplier ideals. {\it  Trans. Amer. Math. Soc.} {\bf
355} (2003), 3143-3174.

\bibitem{HJ} E. Hyry, T. J\"{a}rvilehto,
Jumping numbers and ordered tree structures on the dual graph. {\it Manuscripta Math. }
\textbf{136} (2011), 411-437.

\bibitem{igusa} J. Igusa, Critical points of smooth functions and their normal
forms, {\it Complex and Algebraic Geometry}, Iwanami Shoten, (1977),
357-368.

\bibitem{jar} T. J\"{a}rvilehto, Jumping numbers of a simple
complete ideal in a two-dimensional regular local ring,
\emph{Mem. Amer. Math. Soc.} {\bf 214} (2011), no. 1009.

\bibitem{79-B} J. Kollar, Singularities of pairs,
{\it Proc. Sympos. Pure Math.} {\bf 62} (1997), 221-287.

\bibitem{Kuwata} T. Kuwata, On log canonical thresholds of reducible
curves. \emph{Amer. J. Math.} {\bf 121} (1999), 701-721.

\bibitem{Lazarsfeld} R. Lazarsfeld, {\it Positivity in Algebraic
Geometry. Vol II}, Springer, 2004.

\bibitem{14adv} J. Lipman, Adjoints of ideals in regular local
rings rings. {\it Math. Res. Lett.} {\bf 1} (1994), 739-755, with
an appendix by S.D. Cutkosky.

\bibitem{l-s} J. Lipman, A. Sathaye, Jacobian ideals and theorem of Briançon-Skoda about integral closures of ideals, {\it Michigan Math. J.} {\bf 28} (1981), 199-222.

\bibitem{mus2-I} M. Musta\c{t}\v{a}, Singularities of pairs via jet
schemes. {\it J. Amer. Math. Soc.} \textbf{15} (2002), 599-615.

\bibitem{16-adv} D. Naie, Jumping numbers of a unibranch curve
on a smooth surface. {\it Manuscripta Math.} \textbf{128} (2009),
33-49.

\bibitem{sho-I} V. V. Shokurov, Three-dimensional log perestroikas (in Russian).
With an appendix in English by Y. Kawamata, {\it Izv. Ross. Akad.
Nauk Ser. Mat.} {\bf 56} (1992), 105-203, translation in {\it
Russian Acad. Sci. Izv. Math.} \textbf{40} (1993), 95-202.

\bibitem{ste-I}  J. H. M. Steenbrink, The spectrum of hypersurface singularities.
 {\it Asterisque} \textbf{179-180} (1989), 163-184.

\bibitem{24-adv} K. Tucker, Jumping numbers on algebraic surfaces with
rational singularities. {\it Trans. Amer. Math. Soc.} \textbf{362}
(2010), 3223-3241.

\bibitem{25-adv} K. Tucker, {\it Jumping numbers and multiplier ideals
on algebraic surfaces}, Ph. D. thesis, University of Michigan,
2010.

\bibitem{var2-I} A. N. Varchenko, Asymptotic Hodge
structures in the vanishing cohomology. {\it Math. USSR Izv.}
\textbf{18} (1982), 469-512.

\bibitem{var3-I} A. N. Varchenko, The complex singularity index
does not change along the stratum $\mu$=const (in Russian). {\it
Funktsional. Anal. i Prilozhen.} \textbf{16} (1982), 1-12.

\bibitem{var1-I} A. N. Varchenko, Semicontinuity of the complex singularity
exponent (in Russian). {\it Funksional. Anal. i Prilozhen.}
\textbf{17} (1983), 77-78.

\bibitem{z-s} O. Zariski, P. Samuel, {\it Commutative Algebra. Vol II}, Springer-Verlag, 1960.

\end{thebibliography}
\end{document}